\newtheorem{theorem}{Theorem}
\newtheorem{definition}[theorem]{Definition}
\newtheorem{lemma}[theorem]{Lemma}
\newtheorem{notation}[theorem]{Notation}
\newtheorem{proposition}[theorem]{Proposition}
\newtheorem{remark}[theorem]{Remark}
\newenvironment{proof}[1][Proof]{\noindent\textbf{#1.} }{\ \rule{0.5em}{0.5em}}
\begin{document}

\title{On the lifting and approximation theorem \\for nonsmooth vector fields\thanks{\textbf{2000 AMS\ Classification}: Primary
53C17. \textbf{Keywords}: nonsmooth H\"{o}rmander's vector fields, Lifting,
Subelliptic distance}}
\author{Marco Bramanti, Luca Brandolini, Marco Pedroni}
\maketitle

\begin{abstract}
We prove a version of Rothschild-Stein's theorem of lifting and approximation
and some related results in the context of nonsmooth H\"{o}rmander's vector
fields for which the highest order commutators are only H\"{o}lder continuous.
The theory explicitly covers the case of one vector field having weight two
while the others have weight one.

\end{abstract}

\section*{Introduction}

This paper is focussed on the well-known \textquotedblleft lifting and
approximation\textquotedblright\ theorem proved by Rothschild-Stein in
\cite{RS}, and related topics. To describe the context and aim of this paper,
we have therefore to recall what that theorem is about. (Here we will be
rather sketchy, while precise definitions will be given later).

Let us consider a family of real smooth vector fields $X_{0},X_{1},...,X_{n}$
defined in some domain of $\mathbb{R}^{p},$ and the corresponding second order
differential operator%
\begin{equation}
L=\sum_{i=1}^{n}X_{i}^{2}+X_{0}. \label{LX_0}%
\end{equation}

If the $X_{i}$'s satisfy Hormander's condition, then $L$ is hypoellitptic
(H\"{o}rmander's theorem, \cite{H}).

If there exists in $\mathbb{R}^{p}$ a structure of \textquotedblleft
homogeneous group\textquotedblright\ such that $L$ is left invariant (with
respect to the group translations) and homogeneous of degree two (with respect
to the group dilations), then $L$ possesses a homogeneous fundamental solution
(Folland, \cite{F1}) which allows one to apply fairly standard techniques of
singular integrals, in order to prove a-priori estimates and other interesting
properties of $L.$ If such a group structure does not exists, then
Rothschild-Stein's theory tells us that it is still possible to reduce, in a
suitable sense, the study of $L$ to the study of a homogeneous left invariant
operator. This requires a three-step process. First, one \textquotedblleft
lifts\textquotedblright\ the original vector fields
\[
X_{i}=\sum_{j=1}^{p}a_{j}\left(  x\right)  \partial_{x_{j}},\text{ \ }%
x\in\mathbb{R}^{p}%
\]
which are assumed to satisfy H\"{o}rmander's condition at some step $r,$ to
some new vector fields%
\[
\widetilde{X}_{i}=X_{i}+\sum_{j=1}^{m}b_{ij}\left(  x,t\right)  \partial
_{t_{j}},\text{ \ \ }\left(  x,t\right)  \in\mathbb{R}^{p+m}%
\]
so that these lifted vector fields still satisfy H\"{o}rmander's condition at
the same step $r,$ and are free up to step $r.$

Second, one proves that in $\mathbb{R}^{p+m}$ there exists a structure of
homogeneous group $G$ and a family of left invariant homogeneous vector fields
$Y_{i}$ which locally approximate the $\widetilde{X}_{i}$'s. More precisely,
for every point $\eta=\left(  x,t\right)  $ there is a local diffeomorphism%
\[
u=\Theta_{\eta}\left(  \xi\right)
\]
from a neighborhood of $\eta$ onto a neighborhood of the origin in $G$, such
that with respect to these local coordinates,%
\[
\widetilde{X}_{i}=Y_{i}+R_{i}^{\eta}%
\]
where the \textquotedblleft remainder\textquotedblright\ $R_{i}^{\eta}$ is a
vector field, smoothly depending on the parameter $\eta,$ such that its action
on the fundamental solution $\Gamma$ of%
\[
\sum_{i=1}^{n}Y_{i}^{2}+Y_{0}%
\]
gives a function which is less singular than $Y_{i}\Gamma.$ The map
$\Theta_{\eta}\left(  \cdot\right)  ,$ a key object in this theory, also
possesses other interesting properties:

(1) it depends smoothly on $\eta;$

(2) the function%
\[
\rho\left(  \xi,\eta\right)  =\left\Vert \Theta_{\eta}\left(  \xi\right)
\right\Vert
\]
(where $\left\Vert \cdot\right\Vert $ is a homogeneous norm on the group $G$)
is a quasidistance (which also turns out to be equivalent to the distance
induced by the vector fields);

(3) the change of variables $u=\Theta_{\eta}\left(  \xi\right)  $ obeys to%
\[
d\xi=c\left(  \eta\right)  \left(  1+O\left(  \left\Vert u\right\Vert \right)
\right)  du
\]
where the function $c\left(  \eta\right)  $ is smooth and bounded away from zero.

The set of results just described allows one to prove suitable a priori
estimates for the lifted operator $\widetilde{L}.$ Once these are proved, it
is not difficult (third step) to derive the corresponding estimates for the
original operator, exploiting the fact the $L$ is the projection of
$\widetilde{L}$ on $\mathbb{R}^{p}$. All these results have been proved in
\cite{RS}.

Over the years, the lifting and approximation technique has showed to be
useful also for other purposes. In particular, sometimes the lifting theorem
is enough (without need of the approximation part of the theory), in order to
reduce problems for a general family of H\"{o}rmander's vector fields to
problems for free vector fields, which for algebraic reasons are easier to be studied.

Since the original proof of the lifting and approximation theorem given in
\cite{RS} is long and difficult, several authors have given alternative
proofs: H\"{o}rmander-Melin \cite{HM}, Folland \cite{F2} and Goodman \cite{G}
prove the lifting theorem and a pointwise version of the approximation
theorem, without dealing with the map $\Theta_{\eta}\left(  \cdot\right)  $
and its properties; Folland restricts to the particular case when the starting
vector fields are already left invariant and homogeneous with respect to a
group structure (but are not free);\ more recently, Bonfiglioli-Uguzzoni
\cite{BU} have proved that under Folland's assumptions, the original vector
fields can be lifted directly to free left invariant homogeneous vector fields
(in other words, in this case the \textquotedblleft
remainders\textquotedblright\ $R_{i}^{\eta}$ can be taken equal to zero).
Coming back to the case of general H\"{o}rmander's vector fields,
Christ-Nagel-Stein-Wainger \cite{CNSW} prove a somewhat more general version
of the lifting theorem, because they consider \textquotedblleft
weighted\textquotedblright\ vector fields (we will explain this feature in a
moment); on the other hand, they do not prove any approximation result.

Although Rothschild-Stein state their main results for a H\"{o}rmander
operator (\ref{LX_0}), all their proofs are written for the \textquotedblleft
sum of squares\textquotedblright\ operator%
\[
L=\sum_{i=1}^{n}X_{i}^{2}.
\]
The issue in handling H\"{o}rmander's operators (\ref{LX_0}) consists in the
fact that the vector field $X_{0}$ has \textquotedblleft
weight\textquotedblright\ $2,$ while $X_{1},X_{2},...,X_{n}$ have weight $1$;
this fact requires to modify in a suitable way all the basic definitions
appearing in this context (free vector fields, weight of a commutator,...);
due to the complexity of the theory, this adaptation is not trivial.
Nevertheless, as far as we know, a detailed proof of lifting, approximation,
and properties of the map $\Theta_{\eta}\left(  \cdot\right)  $, adapted to
the case of weighted vector fields has not been written yet (as we have
already pointed out, the paper \cite{CNSW} considers weighted vector fields
but only contains a proof of the lifting result).

A first aim of this paper is to present a detailed proof of the aforementioned
results, explicitly covering the case of \emph{weighted vector fields}.

Second, we are interested in extending these results to the case of
\emph{nonsmooth }vector fields, that is, vector fields which only possess the
number of derivatives involved in the commutators which are necessary to check
H\"{o}rmander's condition, with H\"{o}lder continuous derivatives of the
maximum order. This is part of a larger project which we have started in
\cite{BBP}, where we have proved in this nonsmooth context a Poincar\'{e}'s
inequality, together with the basic properties of the distance induced by the
$X_{i}$'s: Chow's connectivity theorem, the doubling condition, the
equivalence between different distances induced by the $X_{i}$'s, etc. We
refer to the introduction of \cite{BBP} for a survey of the existing
literature about nonsmooth H\"{o}rmander's vector fields.

We point out that the \textit{lifting }theorem which we prove here has already
been used in \cite{BBP}, as one of the tools in the proof of a Poincar\'{e}'s
inequality. Moreover, the whole set of results proved in this paper allows to
establish, for the operator $L,$ the existence of a local fundamental
solution. This result, obtained by a suitable adaptation of the Levi's
parametrix method, will be accomplished in the forthcoming paper \cite{BBMP}.

The main results about nonsmooth weighted vector fields proved in this paper
are: lifting (Thm. \ref{Thm Lifting}, \S 1.2), approximation (Thm.
\ref{Thm nonsmooth approx}, \S 3.3), and properties of the map $\Theta_{\eta
}\left(  \cdot\right)  $ (Prop. \ref{Prop Theta C alfa}, \S 3.3, Prop.
\ref{Prop equiv dist} and Prop. \ref{Prop nonsmooth change}, \S 3.4), which
are the analog of the properties (1), (2), (3) quoted at the beginning of the Introduction.

We now describe the general strategy that we have followed and the structure
of the paper.

In \S 1 we prove the lifting theorem for weighted nonsmooth H\"{o}rmander's
vector field. Here and in the following section we adapt and detail the
arguments in \cite{HM}. However, in order to prove the approximation result
for nonsmooth vector fields, it is not possible to proceed further in the line
of \cite{HM}. The reason is that a basic idea of this theory is that of
rewriting the vector fields in \textquotedblleft canonical
coordinates\textquotedblright; this means to apply a suitable change of
variables, which however turns out to be just H\"{o}lder continuous if the
vector fields have the limited smoothness that we assume, so that this way is closed.

Therefore, in \S 2, we pass to consider free smooth vector fields, proving for
them the approximation result, a ball-box theorem, and the desired properties
of the map $\Theta_{\eta}\left(  \cdot\right)  $. As a by-product we also get,
in the particular case of free vector fields, a quite simple proof of the
results due to Nagel-Stein-Wainger \cite{NSW} about the volume of metric balls
and the doubling condition.

Then, in \S 3, we come back to nonsmooth free vector fields. Now the natural
idea is to approximate nonsmooth vector fields with smooth ones, obtained
taking suitable Taylor's expansions of the coefficients. This idea, firstly
introduced in \cite{C}, has been used also in \cite{BBP}. To these
approximating smooth vector fields we can apply the theory developed in \S 2,
in order to derive the corresponding results in the nonsmooth case. We stress
the fact that, in the nonsmooth context, the properties of the map
$\Theta_{\eta}\left(  \cdot\right)  $ hold in a weaker form: the dependence on
$\eta$ is only H\"{o}lder continuous. However, this is enough to get the
aforementioned existence result for a fundamental solution for $L$.

\noindent\textbf{Acknowledgement.} We wish to thank Professor A. Melin, who
kindly accepted to clarify to us some arguments contained in his paper
\cite{HM}.

\section{Lifting of nonsmooth H\"{o}rmander's vector fields}

\subsection{Assumptions and notation\label{subsec assumptions}}

Let $X_{0},X_{1},...,X_{n}$ be a system of real vector fields, defined in a
domain of $\mathbb{R}^{p}.$ Let us assign to each $X_{i}$ a \textit{weight}
$p_{i}$, saying that%
\[
p_{0}=2\text{ and }p_{i}=1\text{ for }i=1,2,...n.
\]

The following standard notation, will be used throughout the paper. For any
multiindex%
\[
I=\left(  i_{1},i_{2},...,i_{k}\right)
\]
we define the \textit{weight} of $I$ as%
\[
\left\vert I\right\vert =\sum_{j=1}^{k}p_{i_{j}}.
\]
Sometimes, we will also use the (usual) \textit{length} of $I,$%
\[
\ell\left(  I\right)  =k.
\]
For any vector field $X$, we denote by ad$X$ the linear operator which maps
$Y$ to $\left[  X,Y\right]  ,$ where $Y$ is any vector field and $\left[
\cdot,\cdot\right]  $ is the Lie bracket. Now, for any multiindex $I=\left(
i_{1},i_{2},...,i_{k}\right)  $ we set:%
\[
X_{I}=X_{i_{1}}X_{i_{2}}...X_{i_{k}}%
\]
and%
\[
X_{\left[  I\right]  }=\text{ad}X_{i_{1}}\text{ad}X_{i_{2}}...\text{ad}%
X_{i_{k-1}}X_{i_{k}}=\left[  X_{i_{1}},\left[  X_{i_{2}},...\left[
X_{i_{k-1}},X_{i_{k}}\right]  ...\right]  \right]  .
\]
If $I=\left(  i_{1}\right)  ,$ then%
\[
X_{\left[  I\right]  }=X_{i_{1}}=X_{I}.
\]

As usual, $X_{\left[  I\right]  }$ can be seen either as a differential
operator or as a vector field. We will write%
\[
X_{\left[  I\right]  }f
\]
to denote the differential operator $X_{\left[  I\right]  }$ acting on a
function $f$, and
\[
\left(  X_{\left[  I\right]  }\right)  _{x}%
\]
to denote the vector field $X_{\left[  I\right]  }$ evaluated at the point $x$.

\bigskip

\textbf{Assumptions (A). }We assume that for some integer $r\geq2\ $and some
bounded domain (i.e., connected open subset) $\Omega\subset\mathbb{R}^{p}$ the
following hold:

The coefficients of the vector fields $X_{1},X_{2},...,X_{n}$ belong to
$C^{r-1}\left(  \Omega\right)  ,$ while the coefficients of $X_{0}$ belong to
$C^{r-2\,}\left(  \Omega\right)  .$ Here and in the following, $C^{k}$ stands
for the classical space of functions with continuous derivatives up to order
$k$.

These assumptions are consistent in view of the following

\begin{lemma}
Under the assumption (A) above, for any $1\leq k\leq r,$ the differential
operators%
\[
\left\{  X_{I}\right\}  _{\left\vert I\right\vert \leq k}%
\]
are well defined, and have $C^{r-k}$ coefficients. The same is true for the
vector fields $\left\{  X_{\left[  I\right]  }\right\}  _{\left\vert
I\right\vert \leq k}.$
\end{lemma}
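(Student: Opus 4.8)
The plan is to induct on the weight $k$. The claim is really a bookkeeping statement: each application of a single vector field $X_i$ to a $C^\ell$ function produces a $C^{\ell-1}$ function when $p_i=1$ (since then $X_i$ has $C^{r-1}$ coefficients, which for the counting below is more than enough) but only a $C^{\ell-2}$ function when $i=0$, because $X_0$ has only $C^{r-2}$ coefficients. So one must track how much of the ``smoothness budget'' each factor consumes, and observe that a factor $X_0$ costs two units of weight and, correspondingly, two units of differentiability, keeping the two ledgers in sync.

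First I would prove the statement for the ordered products $X_I = X_{i_1}X_{i_2}\cdots X_{i_k}$ (as differential operators) by induction on $\ell(I)$, or more conveniently by induction directly on $|I|$. The base case $|I|=1$ is immediate from Assumptions (A): if $I=(i)$ with $i\geq 1$ then $X_i$ has $C^{r-1}$ coefficients, and $C^{r-1}\subset C^{r-|I|}$ since $|I|=1$; if $I=(0)$ then $X_0$ has $C^{r-2}=C^{r-|I|}$ coefficients. For the inductive step, write $I=(i_1,I')$ with $I'=(i_2,\dots,i_k)$, so $X_I = X_{i_1} X_{I'}$. By the inductive hypothesis $X_{I'}$ is a differential operator of order $\ell(I')=k-1$ with coefficients in $C^{r-|I'|}$. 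Applying $X_{i_1}$, a first-order operator, produces a differential operator whose coefficients are obtained by differentiating once the coefficients of $X_{I'}$ and multiplying by the coefficients of $X_{i_1}$. If $i_1\geq 1$ the coefficients of $X_{i_1}$ lie in $C^{r-1}$ and differentiation costs one derivative, so the new coefficients lie in $C^{\min(r-1,\,r-|I'|-1)} = C^{r-|I'|-1} = C^{r-|I|}$, using $|I|=|I'|+1$ and $|I'|\leq k-1 < r$ so that $r-|I'|-1\geq 0$ and $r-1\geq r-|I'|-1$. If $i_1=0$ the coefficients of $X_0$ lie only in $C^{r-2}$, but now $|I|=|I'|+2$, so we need the new coefficients in $C^{r-|I|}=C^{r-|I'|-2}$, and indeed $C^{\min(r-2,\,r-|I'|-1)} = C^{r-|I'|-2}$ provided $r-2 \le r-|I'|-1$, i.e. $|I'|\le 1$ --- which is false in general, so one must be slightly more careful: the product of a $C^{r-2}$ function with a $C^{r-|I'|-1}$ function lies in $C^{\min(r-2,r-|I'|-1)}$, and since $|I'|\ge 1$ always (as $I'$ is nonempty here) we get $r-|I'|-1\le r-2$, hence the class is $C^{r-|I'|-1}$; but we only claimed $C^{r-|I|}=C^{r-|I'|-2}\subset C^{r-|I'|-1}$, so the claim holds (indeed with one derivative to spare). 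In all cases $|I|\le r$ guarantees the exponents are $\ge 0$ so the statement is non-vacuous.

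Next I would deduce the assertion for the commutators $X_{[I]}$. Here the cleanest route is to note that $X_{[I]}$, as a differential operator, is a $\mathbb{Z}$-linear combination of ordered products $X_J$ with $J$ a permutation of $I$, hence with $|J|=|I|\le r$ and $\ell(J)=\ell(I)$; by the part already proved each such $X_J$ has $C^{r-|I|}$ coefficients, so the same holds for $X_{[I]}$ viewed as an operator. Since $X_{[I]}$ is moreover a genuine vector field (first-order, no zeroth-order term --- this is the standard fact that an iterated Lie bracket of vector fields is again a vector field), its coefficients as a vector field coincide with the first-order coefficients of the operator, which are in $C^{r-|I|}$. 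This gives the second sentence of the lemma.

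The only real subtlety --- and the step I would flag as the main obstacle --- is the synchronization of the two counts in the case involving $X_0$: one must verify that spending weight $2$ for an $X_0$ factor is always matched (or more than matched) by the corresponding loss of two units of differentiability, and conversely that an $X_i$ factor with $i\ge1$, which has the generous regularity $C^{r-1}$, never becomes the bottleneck before the budget $r-|I|$ is exhausted. The inequality comparisons $r-1\ge r-|I'|-1$ (trivial) and $r-2\ge r-|I'|-1$ (equivalently $|I'|\ge1$, automatic since $I'\neq\emptyset$ in the recursion) settle this, but they should be stated explicitly since they are exactly the point where the weighted structure $p_0=2$ enters and makes Assumptions (A) ``consistent'' in the sense the authors mention. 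Everything else is the routine Leibniz/chain-rule bookkeeping that I would not spell out in detail.
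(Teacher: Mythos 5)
Your argument is correct and is essentially the paper's own proof: the same induction (on the weight, via the decomposition $X_I=X_{i_1}X_{I'}$) with the same regularity count, which the paper phrases uniformly as $h=\min\left(r-k+p_{i_1}-1,\,r-p_{i_1}\right)\geq r-k$ instead of your two cases $i_1\geq 1$ and $i_1=0$, and your treatment of $X_{[I]}$ via the expansion into products $X_J$ with $\left\vert J\right\vert=\left\vert I\right\vert$ is the natural way to fill in what the paper leaves implicit. Only cosmetic slips: $I=(0)$ has weight $2$, so it belongs to the length-one base case rather than to ``$\left\vert I\right\vert=1$'', and one inclusion is stated backwards (what you use is $C^{r-\left\vert I'\right\vert-1}\subset C^{r-\left\vert I'\right\vert-2}$); neither affects the argument.
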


\begin{proof}
By induction on $k$. For $k=1,$ the assertion is part of the assumption (A).
Assume the assertion holds up to $k-1$, and let
\[
I=\left(  i_{1},i_{2},...,i_{m}\right)  ,\text{ with }\left\vert I\right\vert
=k.
\]
Set $I^{\prime}=\left(  i_{2},...,i_{m}\right)  $ so that $X_{I}\left(
x\right)  =X_{i_{1}}X_{I^{\prime}}\left(  x\right)  .$ If $X_{i_{1}}$ has
weight $p_{i_{1}}$, then $\left\vert I^{\prime}\right\vert =k-p_{i_{1}}\geq0;$
by inductive assumption, $X_{I^{\prime}}$ has $C^{r-k+p_{i_{1}}}$
coefficients, hence $X_{i_{1}}X_{I^{\prime}}\left(  x\right)  $ has $C^{h}$
coefficients, with $h=\min\left(  r-k+p_{i_{1}}-1,r-p_{i_{1}}\right)  \geq
r-k,$ and we are done.
\end{proof}

\subsection{H\"{o}rmander-Melin procedure\label{subsection lifting}}

We are now going to define the concept of free vector fields. Clearly, any
vector field $X_{\left[  I\right]  }$ with $\left\vert I\right\vert \leq r$
can be rewritten explicitly as a linear combination of operators of the kind
$X_{J}$ for $\left\vert J\right\vert =\left\vert I\right\vert $:
\[
X_{\left[  I\right]  }=\sum_{J}A_{IJ}X_{J}%
\]
where $\left\{  A_{IJ}\right\}  _{\left\vert I\right\vert ,\left\vert
J\right\vert \leq r}$ is a matrix of universal constants, built exploiting
only those relations between $X_{\left[  I\right]  }$ and $X_{J}$ which hold
automatically, as a consequence of the definition of $X_{\left[  I\right]  },$
regardless of the specific properties of the vector fields $X_{0}%
,X_{1},...,X_{n}$. In particular, we see that%
\[
A_{IJ}=0\text{ if }\left\vert J\right\vert \neq\left\vert I\right\vert
\]
and
\[
A_{IJ}=\delta_{IJ}\text{ if }\left\vert J\right\vert =\left\vert I\right\vert
=1\text{.}%
\]
Also, note that if $\left\{  a_{I}\right\}  _{I\in B}$ is any finite set of
constants such that%
\begin{equation}
\sum_{I\in B}a_{I}A_{IJ}=0\text{ }\forall J\text{, \ \ then \ \ }\sum_{I\in
B}a_{I}X_{\left[  I\right]  }\equiv0 \label{anti-free}%
\end{equation}
for arbitrary vector fields $X_{0},X_{1},...,X_{n}.$ Reversing this property
we get the definition of a key concept which will be dealt with in the following:

\begin{definition}
For any positive integer $s\leq r$, we say that the vector fields $X_{0}%
,X_{1},...,X_{n}$ are free up to weight $s$ at $0$, if, for any family of
constants $\left\{  a_{I}\right\}  _{\left\vert I\right\vert \leq s}$,%
\[
\sum_{\left\vert I\right\vert \leq s}a_{I}\left(  X_{\left[  I\right]
}\right)  _{0}=0\Longrightarrow\sum_{\left\vert I\right\vert \leq s}%
a_{I}A_{IJ}=0\text{ }\forall J.
\]

\end{definition}

Comparing this definition with (\ref{anti-free}) shows that $X_{0}%
,X_{1},...,X_{n}$ are free up to weight $s$ at $0$ if, roughly speaking, the
only linear identities relating the $X_{\left[  I\right]  }$'s for $\left\vert
I\right\vert \leq s$ (at $0$) are those which hold for any possible choice of
$X_{0},X_{1},...,X_{n}$, as a consequence of the formal properties of the Lie
bracket, namely antisymmetry and Jacobi identity. However, the different
weights of the vector fields make this property not so easy to state more
explicitly. For instance, saying that $X_{0},X_{1},...,X_{n}$ are free up to
weight $1$ at $0$ just means that $X_{1},...,X_{n}$ are linearly independent
(without any requirement on $X_{0}$); saying that $X_{0},X_{1},...,X_{n}$ are
free up to weight $2$ at $0$ means that:

\begin{itemize}
\item[i)] $X_{1},...,X_{n}$ are linearly independent;

\item[ii)] $X_{0}$ is not a linear combination of vector fields of the kind
$\left[  X_{i},X_{j}\right]  $ for $i,j=1,2,...,n;$

\item[iii)] the only linear relations between the $\left[  X_{i},X_{j}\right]
$'s (for $i,j=1,2,...,n$) are those following from antisymmetry.
\end{itemize}

Clearly, for a general $s$ the explicit description of this property becomes
cumbersome\textit{.}

\begin{proposition}
\label{Proposition 2}The vector fields $X_{0},X_{1},...,X_{n}$ are free of
weight $s$ at $0$ if and only if for any family of constants $\{c_{I}%
\}_{\left\vert I\right\vert \leq s}\subset\mathbb{R}$ there exists a function
$u\in C^{\infty}\left(  \mathbb{R}^{p}\right)  $ such that $X_{I}u(0)=c_{I}$
when $\left\vert I\right\vert \leq s$.
\end{proposition}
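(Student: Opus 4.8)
The plan is to show that the prescription property is equivalent to the linear independence, as functionals on $C^{\infty}(\mathbb{R}^{p})$, of the maps $\lambda_{I}\colon u\mapsto X_{I}u(0)$ for $|I|\le s$ (these are well defined and depend only on the $s$-jet of $u$ at $0$, since by the Lemma each $X_{I}$ with $|I|\le s\le r$ has at least continuous coefficients), and then to connect this independence with freeness via the universal algebra generated by abstract symbols. Write $N$ for the number of multiindices of weight $\le s$. The backward implication is immediate: if for every $(c_{I})_{|I|\le s}$ there is $u$ with $X_{I}u(0)=c_{I}$, the linear map $u\mapsto(X_{I}u(0))_{|I|\le s}$ is onto $\mathbb{R}^{N}$, so the $\lambda_{I}$ are linearly independent; hence, if $\sum_{|I|\le s}a_{I}(X_{[I]})_{0}=0$, the vector field $\sum_{I}a_{I}X_{[I]}$ vanishes at $0$, so $\sum_{I}a_{I}X_{[I]}u(0)=0$ for all $u$, and writing $X_{[I]}=\sum_{J}A_{IJ}X_{J}$ this becomes $\sum_{J}\bigl(\sum_{I}a_{I}A_{IJ}\bigr)\lambda_{J}(u)=0$ for all $u$ (only $|J|\le s$ occur, as $A_{IJ}=0$ unless $|J|=|I|$), whence $\sum_{I}a_{I}A_{IJ}=0$ for every $J$, which is freeness up to weight $s$ at $0$.

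For the forward implication I would work in the free associative $\mathbb{R}$-algebra $\mathcal{T}$ on symbols $\xi_{0},\dots,\xi_{n}$, graded by assigning $\xi_{0}$ weight $2$ and each $\xi_{i}$ ($i\ge1$) weight $1$, whose homogeneous basis is the set of words $\xi_{I}$; let $\mathcal{F}\subset\mathcal{T}$ be the free Lie subalgebra they generate, so that $U(\mathcal{F})=\mathcal{T}$, and fix a homogeneous basis $\mathcal{B}$ of $\mathcal{F}$ consisting of iterated brackets $\xi_{[K]}$, equipped with a total order. Expanding brackets gives $\xi_{[I]}=\sum_{J}A_{IJ}\xi_{J}$ (the universal constants $A_{IJ}$ above), while the Poincar\'e--Birkhoff--Witt theorem expresses each word of weight $\le s$ in the PBW basis, $\xi_{I}=\sum B_{I,(K_{1},\dots,K_{l})}\,\xi_{[K_{1}]}\cdots\xi_{[K_{l}]}$, the sum over increasing $\mathcal{B}$-tuples with $\sum_{i}|K_{i}|=|I|$. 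Since the words of weight $\le s$ and the (homogeneous) PBW monomials of weight $\le s$ are two bases of the weight-$\le s$ part of $\mathcal{T}$, which is finite-dimensional of dimension $N$, the matrix $(B_{I,(K_{\cdot})})$ is invertible. Substituting $\xi_{i}\mapsto X_{i}$ turns everything into identities of differential operators for the given fields, all meaningful because $\sum_{i}|K_{i}|\le s\le r$ and the coefficients have the regularity guaranteed by the Lemma.

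Now suppose the $X_{i}$ are free up to weight $s$ at $0$ and $\sum_{|I|\le s}a_{I}\lambda_{I}=0$; I must deduce $a=0$. First, freeness applied to coefficients supported on $\{K\in\mathcal{B}:|K|\le s\}$, together with the linear independence of the rows $(A_{KJ})_{J}$ for $K\in\mathcal{B}$ (which holds because $\{\xi_{[K]}:K\in\mathcal{B}\}$ is a basis of $\mathcal{F}$), shows that the tangent vectors $\{(X_{[K]})_{0}:K\in\mathcal{B},\ |K|\le s\}$ are linearly independent in $\mathbb{R}^{p}$; I then choose linear coordinates $x=(x_{K})_{K\in\mathcal{B},\,|K|\le s}$ on $\mathbb{R}^{p}$ (together with further coordinates) so that $X_{[K]}=\partial_{x_{K}}+R_{K}$ with the coefficients of $R_{K}$ vanishing at $0$. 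Substituting the PBW identities into $\sum_{|I|\le s}a_{I}\lambda_{I}=0$ gives $\sum_{(K_{\cdot})}d_{(K_{\cdot})}\,X_{[K_{1}]}\cdots X_{[K_{l}]}u(0)=0$ for all $u$, where $d$ is an invertible linear image of $a$. Finally, the principal symbol of $X_{[K_{i}]}$ at $0$ is $\zeta_{x_{K_{i}}}$, so $X_{[K_{1}]}\cdots X_{[K_{l}]}$ and $\partial_{x_{K_{1}}}\cdots\partial_{x_{K_{l}}}$ have the same order-$l$ part at $0$; hence $X_{[K_{1}]}\cdots X_{[K_{l}]}u(0)$ equals $\partial_{x_{K_{1}}}\cdots\partial_{x_{K_{l}}}u(0)$ modulo derivatives of $u$ at $0$ of order $<l$, and since distinct increasing $\mathcal{B}$-tuples yield distinct coordinate monomials, a triangularity argument in the differentiation order shows these functionals are linearly independent, forcing $d=0$ and thus $a=0$. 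Therefore the $\lambda_{I}$ are linearly independent, which gives the prescription property.

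I expect the crux to be the forward direction, and within it two algebraic points: that $\{X_{I}\}_{|I|\le s}$ and the ordered products $\{X_{[K_{1}]}\cdots X_{[K_{l}]}\}_{\sum_{i}|K_{i}|\le s}$ are interchanged by an invertible \emph{universal} matrix (the weighted PBW theorem plus the equality $\#\{I:|I|\le s\}=N$), and the verification that evaluating such an ordered product at $0$ drops the differential order as soon as a remainder $R_{K_{i}}$ is present. The bookkeeping of the limited regularity ($s\le r$) should be routine given the Lemma.
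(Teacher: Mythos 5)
Your proof is correct, but for the hard direction (freeness $\Rightarrow$ prescription) it follows a genuinely different route from the paper. The paper \emph{constructs} $u$ by induction on the number of bracket factors: it filters the span of the monomials $\xi_I$ by the subspaces $V_j$ spanned by products $\xi_{[I_1]}\cdots\xi_{[I_\nu]}$ with $\nu\le j$, and at each step corrects $u_0$ by a homogeneous polynomial $v$ of degree $j$ whose $j$-th differential is prescribed on the span of the $(X_{[I]})_0$; the crux there is that the prescribed data define a well-defined \emph{symmetric} $j$-linear form, the symmetry being extracted from the rewriting $[\xi_{[I]},\xi_{[J]}]=\sum_{|K|=|I|+|J|}b_K\xi_{[K]}$ (Jacobi identity) together with the inductive hypothesis. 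You instead dualize: surjectivity of $u\mapsto(X_Iu(0))_{|I|\le s}$ is linear independence of the functionals $\lambda_I$, and you obtain it by passing, via the weighted Poincar\'e--Birkhoff--Witt change of basis (a universal invertible matrix, legitimately substituted into the operators since all weights are $\le s\le r$), to the ordered products $X_{[K_1]}\cdots X_{[K_l]}$, using freeness only to get linear independence of the $(X_{[K]})_0$, $K\in\mathcal{B}$, and finishing with adapted linear coordinates and a principal-part/triangularity argument (test against homogeneous polynomials of the top degree and descend). Each approach has its advantages: the paper's is self-contained (no PBW, only the Jacobi rewriting lemma, which it proves on the spot) and explicitly produces $u$ as a polynomial; yours is structurally shorter and avoids the symmetry verification, at the price of invoking PBW for the weight-graded free Lie algebra and the fact that a homogeneous basis $\mathcal{B}$ can be chosen among the left-nested brackets $\xi_{[K]}$ --- a spanning fact you should justify (it is exactly what the paper's Jacobi-identity induction provides). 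Two small points to fix in a write-up: the PBW monomials are indexed by \emph{non-decreasing} tuples (repetitions allowed), not strictly increasing ones, and the final triangularity step should be stated as a downward induction on the length $l$, testing with homogeneous polynomials of degree equal to the largest $l$ present, so that all lower-order contaminations vanish at $0$.
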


\begin{proof}
To show that the \textquotedblleft if\textquotedblright\ condition holds, let
us suppose that $\sum_{\left\vert I\right\vert \leq s}a_{I}X_{\left[
I\right]  }(0)=0$. Then
\[
0=\sum_{\left\vert I\right\vert \leq s}a_{I}X_{\left[  I\right]  }%
u(0)=\sum_{\left\vert I\right\vert \leq s}a_{I}\sum_{\left\vert J\right\vert
\leq s}A_{IJ}X_{J}u(0)=\sum_{\left\vert I\right\vert ,\left\vert J\right\vert
\leq s}a_{I}A_{IJ}c_{J}\text{,}%
\]
and this implies that $\sum_{\left\vert I\right\vert \leq s}a_{I}A_{IJ}=0$
when $\left\vert J\right\vert \leq s$, since the $c_{J}$'s are arbitrary. Thus
$X_{0},X_{1},...,X_{n}$ are free of weight $s$ at $0$.

Now we need some notations to prove the \textquotedblleft only
if\textquotedblright\ condition. We consider polynomials in the noncommuting
variables $\xi_{0},\xi_{1},...,\xi_{n}$ and we assign to $\xi_{0}$ the weight
$p_{0}=2$, and to $\xi_{i}$, for $i=1,...,n$ the weight $p_{i}=1$. As we did
with the vector fields $X_{0},X_{1},...,X_{n}$, for any multi-index $I=\left(
i_{1},i_{2},...,i_{k}\right)  $ we put $\xi_{\left[  I\right]  }=\text{ad }%
\xi_{i_{1}}...\text{ad }\xi_{i_{k-1}}\,\xi_{i_{k}}$, where $\text{ad }\xi
_{i}\,\xi_{j}=\xi_{i}\xi_{j}-\xi_{j}\xi_{i}$. Finally, we let $V$ be the
vector space spanned by the monomials $\xi_{I}$, with $\left\vert I\right\vert
\leq s$, and $V^{\prime}$ be its dual space. Every function $u\in C^{\infty
}\left(  \mathbb{R}^{p}\right)  $ gives rise to the linear map $\Lambda_{u}\in
V^{\prime}$ defined by $\Lambda_{u}(p)=p(X_{0},...,X_{n})u(0)$, where $p\in
V$. (Notation: if $p=\xi_{I},$ then $p(X_{0},...,X_{n})u(0)=\left(
X_{I}u\right)  \left(  0\right)  $). Thus we have a mapping
\begin{align*}
\Lambda &  :C^{\infty}\left(  \mathbb{R}^{p}\right)  \rightarrow V^{\prime}\\
\Lambda &  :u\mapsto\Lambda_{u}%
\end{align*}
and our aim is to show that it is surjective. More precisely, if $L\in
V^{\prime}$ is defined by $L(\xi_{I})=c_{I}$, we have to find $u\in C^{\infty
}\left(  \mathbb{R}^{p}\right)  $ such that $\Lambda_{u}=L$. Let us denote
with $V_{j}$ the subspace of $V$ spanned by the products $\xi_{\left[
I_{1}\right]  }\cdots\xi_{\left[  I_{\nu}\right]  }$, with $\nu\leq j$ (and
$\left\vert I_{1}\right\vert +\cdots+\left\vert I_{\nu}\right\vert \leq s$).
Notice that $V_{s}=V$. We will show by induction with respect to $j$, with
$1\leq j\leq s$, that there exists $u\in C^{\infty}\left(  \mathbb{R}%
^{p}\right)  $ such that $\Lambda_{u}=L$ on $V_{j}$, that is to say,
\begin{equation}
X_{\left[  I_{1}\right]  }\cdots X_{\left[  I_{\nu}\right]  }u(0)=L(\xi
_{\left[  I_{1}\right]  }\cdots\xi_{\left[  I_{\nu}\right]  }) \label{induznu}%
\end{equation}
if $\nu\leq j$ and $\left\vert I_{1}\right\vert +\cdots+\left\vert I_{\nu
}\right\vert \leq s$.

If $j=1$, then $\nu=1$ and (\ref{induznu}) can be written as
\[
X_{\left[  I\right]  }u(0)=L\left(  \xi_{\left[  I\right]  }\right)  \text{
for any }\left\vert I\right\vert \leq s.
\]
Since the $X_{i}$'s are free of weight $s$ at $0$, we have that
\begin{equation}
\sum_{\left\vert I\right\vert \leq s}a_{I}\left(  X_{\left[  I\right]
}\right)  _{0}=0\qquad\implies\qquad\sum_{\left\vert I\right\vert \leq s}%
a_{I}\xi_{\left[  I\right]  }=0\text{.} \label{freexi}%
\end{equation}
Namely, $\sum_{\left\vert I\right\vert \leq s}a_{I}\left(  X_{\left[
I\right]  }\right)  _{0}=0$ implies that $\sum_{\left\vert I\right\vert \leq
s}a_{I}A_{IJ}=0$ for any $J,$ hence%
\[
\sum_{\left\vert I\right\vert \leq s}a_{I}\xi_{\left[  I\right]  }%
=\sum_{\left\vert I\right\vert \leq s}a_{I}\sum_{J}A_{IJ}\xi_{J}=\sum
_{J}\left(  \sum_{\left\vert I\right\vert \leq s}a_{I}A_{IJ}\right)  \xi
_{J}=0.
\]
By (\ref{freexi}), there is a (unique) linear form defined on the span of the
tangent vectors $\{\left(  X_{\left[  I\right]  }\right)  _{0}\}_{\left\vert
I\right\vert \leq s}$ by
\[
\left(  X_{\left[  I\right]  }\right)  _{0}\mapsto L\left(  \xi_{\left[
I\right]  }\right)  \text{.}%
\]
We can extend this form to $\mathbb{R}^{p}$ and then find a function $u\in
C^{\infty}\left(  \mathbb{R}^{p}\right)  $, e.g., a first degree homogeneous
polynomial, such that the differential $u^{(1)}(0)=du(0)$ of $u$ at 0
coincides with such an extension. Since $u^{(1)}(0)\left(  X_{\left[
I\right]  }\right)  _{0}=X_{\left[  I\right]  }u(0)$, the case $j=1$ is done.

Assume now that for any $L\in V^{\prime}$ there exists $u_{0}\in C^{\infty
}\left(  \mathbb{R}^{p}\right)  $ such that $\Lambda_{u_{0}}=L$ on $V_{j-1}$.
This means that
\begin{equation}
X_{\left[  I_{1}\right]  }\cdots X_{\left[  I_{\nu}\right]  }u_{0}%
(0)=L(\xi_{\left[  I_{1}\right]  }\cdots\xi_{\left[  I_{\nu}\right]  })
\label{induzj-1}%
\end{equation}
when $\nu\leq j-1$ and $\left\vert I_{1}\right\vert +\cdots+\left\vert I_{\nu
}\right\vert \leq s$. If $u=u_{0}+v$, with $v$ vanishing of order $j$ at 0 (in
the usual sense), then $\Lambda_{u}=L$ on $V_{j-1}$, meaning that we must find
$v$ in such a way that (\ref{induznu}) is solved for $\nu=j$. In this case,
the equation takes the form
\begin{equation}
v^{(j)}(0)\left(  \left(  X_{\left[  I_{1}\right]  }\right)  _{0}%
,\dots,\left(  X_{\left[  I_{j}\right]  }\right)  _{0}\right)  =L(\xi_{\left[
I_{1}\right]  }\cdots\xi_{\left[  I_{j}\right]  })-X_{\left[  I_{1}\right]
}\cdots X_{\left[  I_{j}\right]  }u_{0}(0)\text{,} \label{induzj}%
\end{equation}
where $v^{(j)}(0)$ is the $j$-th differential of $v$ at 0, seen as a
$j$-linear form on $\mathbb{R}^{p}$. Namely,%
\[
X_{\left[  I_{1}\right]  }\cdots X_{\left[  I_{j}\right]  }u(0)=X_{\left[
I_{1}\right]  }\cdots X_{\left[  I_{j}\right]  }u_{0}(0)+X_{\left[
I_{1}\right]  }\cdots X_{\left[  I_{j}\right]  }v(0)=L(\xi_{\left[
I_{1}\right]  }\cdots\xi_{\left[  I_{\nu}\right]  })
\]
but $X_{\left[  I_{1}\right]  }\cdots X_{\left[  I_{j}\right]  }v(0)$ simply
equals $v^{(j)}(0)\left(  \left(  X_{\left[  I_{1}\right]  }\right)
_{0},\dots,\left(  X_{\left[  I_{j}\right]  }\right)  _{0}\right)  ,$ because
all the derivatives of $v$ of intermediate order (which appears expanding the
differential operator $X_{\left[  I_{1}\right]  }\cdots X_{\left[
I_{j}\right]  }$) actually vanish because $v$ vanishes of order $j$ at 0.

Thus, if we show that the right-hand side of (\ref{induzj}) defines a
symmetric $j$-linear form on the span of the tangent vectors $\left(
X_{\left[  I\right]  }\right)  _{0}$, where $\left\vert I\right\vert \leq s,$
then we are done, because we can then extend this form to $\mathbb{R}^{p}$ and
therefore find a function $v\in C^{\infty}\left(  \mathbb{R}^{p}\right)  $
vanishing of order $j$ at 0, e.g., a $j$-th degree homogeneous polynomial,
such that its $j$-th differential $v^{(j)}(0)$ at 0 coincides with the
extended $j$-linear form.

So, let $J$ be the form defined by%
\[
J:\left(  \left(  X_{\left[  I_{1}\right]  }\right)  _{0},\dots,\left(
X_{\left[  I_{j}\right]  }\right)  _{0}\right)  \mapsto L(\xi_{\left[
I_{1}\right]  }\cdots\xi_{\left[  I_{j}\right]  })-X_{\left[  I_{1}\right]
}\cdots X_{\left[  I_{j}\right]  }u_{0}(0).
\]
The check that this $j$-linear form is actually well defined amounts to show
that%
\begin{align*}
&  \sum a_{I_{1}}\left(  X_{\left[  I_{1}\right]  }\right)  _{0}=0,\sum
a_{I_{2}}\left(  X_{\left[  I_{2}\right]  }\right)  _{0}=0,...,\sum a_{I_{j}%
}\left(  X_{\left[  I_{j}\right]  }\right)  _{0}=0\Longrightarrow\\
&  \sum a_{I_{1}}a_{I_{2}}...a_{I_{j}}\left\{  L(\xi_{\left[  I_{1}\right]
}\cdots\xi_{\left[  I_{j}\right]  })-X_{\left[  I_{1}\right]  }\cdots
X_{\left[  I_{j}\right]  }u_{0}(0)\right\}  =0
\end{align*}
This is almost the same as in the $j=1$ case. Indeed, the implication
(\ref{freexi}) still holds, and therefore $\sum a_{I_{i}}\left(  X_{\left[
I_{i}\right]  }\right)  _{0}=0\Longrightarrow\sum a_{I_{i}}\xi_{\left[
I_{i}\right]  }=0$ for $i=1,2,...,n$; hence%
\begin{align*}
&  \sum a_{I_{1}}a_{I_{2}}...a_{I_{j}}\left\{  L(\xi_{\left[  I_{1}\right]
}\cdots\xi_{\left[  I_{j}\right]  })-X_{\left[  I_{1}\right]  }\cdots
X_{\left[  I_{j}\right]  }u_{0}(0)\right\}  =\\
&  =L\left(  \sum a_{I_{1}}\xi_{\left[  I_{1}\right]  }\sum a_{I_{2}}%
\xi_{\left[  I_{2}\right]  }...\sum a_{Ij}\xi_{\left[  Ij\right]  }\right)
+\\
&  -\sum a_{I_{1}}X_{\left[  I_{1}\right]  }\sum a_{I_{2}}X_{\left[
I_{2}\right]  }...\sum a_{Ij}X_{\left[  I_{j}\right]  }u_{0}\left(  0\right)
=0.
\end{align*}
To show the symmetry of $J$, let us introduce
\[
d_{I_{1},\dots,I_{j}}=L(\xi_{\left[  I_{1}\right]  }\cdots\xi_{\left[
I_{j}\right]  })-X_{\left[  I_{1}\right]  }\cdots X_{\left[  I_{j}\right]
}u_{0}(0)\text{,}%
\]
and let us prove that they are symmetric in the (multi-)indices. We first need
to show that for every pair of multi-indices $I$ and $J$, one has
\[
\lbrack\xi_{\left[  I\right]  },\xi_{\left[  J\right]  }]=\sum_{\left\vert
K\right\vert =\left\vert I\right\vert +\left\vert J\right\vert }b_{K}%
\xi_{\left[  K\right]  }\text{,}%
\]
where the $b_{K}$'s are absolute constants, only depending on the multiindices
$I,J,K$. This is just a consequence of Jacoby identity, as we can show by
induction on $\ell\left(  I\right)  $. First, if $\ell\left(  I\right)  =1,$
that is $I=\left(  i\right)  ,$ there is nothing to prove, because%
\[
\lbrack\xi_{\left[  I\right]  },\xi_{\left[  J\right]  }]=[\xi_{i}%
,\xi_{\left[  J\right]  }]=\xi_{\left[  K\right]  }%
\]
with $K=\left(  i,J\right)  $, just by definition of $\xi_{\left[  K\right]
}$. Assume then the property for $\ell\left(  I\right)  \leq k,$ and let
$I=\left(  i,I^{\prime}\right)  $ with \ $\ell\left(  I^{\prime}\right)  =k$;
then%
\begin{align*}
\left[  \xi_{\left[  I\right]  },\xi_{\left[  J\right]  }\right]   &  =\left[
\xi_{\left[  i,I^{\prime}\right]  },\xi_{\left[  J\right]  }\right]  =\left[
\left[  \xi_{i},\xi_{\left[  I^{\prime}\right]  }\right]  ,\xi_{\left[
J\right]  }\right]  =\\
&  =\left[  \xi_{i},\left[  \xi_{\left[  I^{\prime}\right]  },\xi_{\left[
J\right]  }\right]  \right]  +\left[  \xi_{\left[  I^{\prime}\right]
},\left[  \xi_{\left[  J\right]  },\xi_{i}\right]  \right]  .
\end{align*}
Now the first term in the last sum is already in the proper form, while the
second can be rewritten in the proper form by inductive assumption, so we are done.

Let us show now the desired symmetry result. It is clearly sufficient to show
it for consecutive indices. We limit ourselves to verify the symmetry with
respect to the first two indices, the other cases being a straightforward
generalization of it. Indeed, we have that
\begin{align*}
d_{I_{1},I_{2},\dots,I_{j}}  &  -d_{I_{2},I_{1},\dots,I_{j}}=L([\xi_{\left[
I_{1}\right]  },\xi_{\left[  I_{2}\right]  }]\cdots\xi_{\left[  I_{j}\right]
})-[X_{\left[  I_{1}\right]  },X_{\left[  I_{2}\right]  }]\cdots X_{\left[
I_{j}\right]  }u_{0}(0)\\
&  =\sum_{\left\vert K\right\vert =\left\vert I\right\vert +\left\vert
J\right\vert }b_{K}\left(  L(\xi_{\left[  K\right]  }\xi_{\left[
I_{3}\right]  }\cdots\xi_{\left[  I_{j}\right]  })-X_{\left[  K\right]
}X_{\left[  I_{3}\right]  }\cdots X_{\left[  I_{j}\right]  }u_{0}(0)\right)
=0
\end{align*}
by the induction hypothesis (\ref{induzj-1}). Now we are (almost) done,
because we have shown that the right-hand side of (\ref{induzj}) defines a
symmetric $j$-linear form on the span of $\{\left(  X_{\left[  I\right]
}\right)  _{0})\}_{\left\vert I\right\vert \leq s}$. As we did for $j=1$, we
can extend this form to $\mathbb{R}^{p}$ and then find a function $v\in
C^{\infty}\left(  \mathbb{R}^{p}\right)  $ vanishing of order $j$ at 0, e.g.,
a $j$-th degree homogeneous polynomial, such that its $j$-th differential
$v^{(j)}(0)$ at 0 coincides with the extended $j$-linear form. This completes
the proof.
\end{proof}

\begin{proposition}
\label{Proposition 3}Let $X_{0},X_{1},...,X_{n}$ be free of weight $s-1$ but
not of weight $s$ at $0$. Then one can find vector fields $\widetilde{X}_{j}$
in $\mathbb{R}^{p+1}$ of the form%
\begin{equation}
\widetilde{X}_{j}=X_{j}+u_{j}\left(  x\right)  \frac{\partial}{\partial
t}\text{ }\left(  j=0,1,...,n\right)  \label{Xtilde}%
\end{equation}
with $u_{j}\in C^{\infty}\left(  \mathbb{R}^{p}\right)  ,$ such that

1. the $\widetilde{X}_{j}$'s remain free of weight $s-1$;

2. for every $r\geq s,$%
\[
\dim\left\langle \left(  \widetilde{X}_{\left[  I\right]  }\right)
_{0}\right\rangle _{\left\vert I\right\vert \leq r}=\dim\left\langle \left(
X_{\left[  I\right]  }\right)  _{0}\right\rangle _{\left\vert I\right\vert
\leq r}+1
\]
where the symbol $\left\langle Y_{\alpha}\right\rangle _{\alpha\in B}$ denotes
the vector space spanned by the vectors $\left\{  Y_{\alpha}:\alpha\in
B\right\}  .$
\end{proposition}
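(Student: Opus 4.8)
The plan is to adjoin the single coordinate $t$ and to choose the $u_j$'s so that exactly one linear relation among the vectors $(X_{[I]})_0$ at weight $s$ gets destroyed by the new $\partial_t$-direction, while no relation at weight $s-1$ is touched. Write $\widetilde{X}_{[I]}=X_{[I]}+\phi_I\,\partial_t$ with $\phi_I\in C^\infty(\mathbb{R}^p)$ depending on the $u_j$'s; the heart of the proof is to understand $\phi_I(0)$.

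First I would dispose of property 1, which holds \emph{whatever} the $u_j$'s are: if $\sum_{|I|\le s-1}a_I(\widetilde{X}_{[I]})_0=0$, projecting away the $\partial_t$-component gives $\sum_{|I|\le s-1}a_I(X_{[I]})_0=0$, and since the $X_j$'s are free of weight $s-1$ this forces $\sum_{|I|\le s-1}a_IA_{IJ}=0$ for all $J$ (the matrix $A_{IJ}$ is universal, hence is the one attached to any system of vector fields, the $\widetilde X_j$'s included). The same projection shows that $\dim\langle(\widetilde X_{[I]})_0\rangle_{|I|\le r}$ equals $\dim\langle(X_{[I]})_0\rangle_{|I|\le r}$ or that quantity $+1$, the value $+1$ occurring exactly when $\partial_t$ lies in the span, i.e. when there are constants $(a_I)_{|I|\le r}$ with $\sum a_I(X_{[I]})_0=0$ but $\sum a_I\phi_I(0)\ne0$. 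Producing such constants with $|I|\le s$ settles all $r\ge s$ at once (extend by zero), so it remains to make one such choice.

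The key step is to linearize the dependence of $\phi_I(0)$ on the $u_j$'s. Test against $F(x,t)=t$: then $\widetilde X_jF=u_j$, and since $X_{[I]}F=0$ one has $\phi_I=\widetilde X_{[I]}F$. Expanding $\widetilde X_{[I]}=\sum_J A_{IJ}\widetilde X_J$ and noting that $\widetilde X_JF=X_{j_1}\cdots X_{j_{k-1}}u_{j_k}$ for $J=(j_1,\dots,j_k)$ — the surplus $\partial_t$'s annihilate functions of $x$ alone — I obtain $\phi_I(0)=\sum_J A_{IJ}\,(X_{J'}u_{j(J)})(0)$, where $j(J)$ is the last entry of $J$ and $J'$ is $J$ with that entry deleted. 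Hence, with $c_J:=\sum_{|I|\le s}a_IA_{IJ}$,
\[
\sum_{|I|\le s}a_I\phi_I(0)=\sum_{|J|\le s}c_J\,(X_{J'}u_{j(J)})(0).
\]
Because the $X_j$'s are not free of weight $s$, fix $(a_I)_{|I|\le s}$ with $\sum a_I(X_{[I]})_0=0$ and $c_{J_0}\ne0$ for some $J_0$ with $|J_0|\le s$. Put $m_0:=j(J_0)$, so that $|J_0'|=|J_0|-p_{m_0}\le s-1$. Now apply Proposition \ref{Proposition 2} at weight $s-1$ (legitimate, since the $X_j$'s are free of weight $s-1$) to obtain $u_{m_0}\in C^\infty(\mathbb{R}^p)$ with $X_Iu_{m_0}(0)=\delta_{I,J_0'}$ for all $|I|\le s-1$, and set $u_m\equiv0$ for $m\ne m_0$. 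For a fixed last entry $m_0$ the map $J\mapsto J'$ is injective and all resulting $J'$ have weight $\le s-1$, so the right-hand side above collapses to $c_{J_0}\ne0$; thus $\sum_{|I|\le s}a_I(\widetilde X_{[I]})_0=c_{J_0}\,\partial_t\ne0$, which is exactly the relation we wanted.

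I expect the only real difficulty to be the bookkeeping behind the displayed identity: noticing that evaluating on $F(x,t)=t$ converts the iterated-bracket dependence of the $\partial_t$-components on the $u_j$'s into a plain linear functional of the jets $\{X_Iu_m(0)\}_{|I|\le s-1}$, which Proposition \ref{Proposition 2} allows us to prescribe freely. Everything else is routine: the weight count $|J_0'|=|J_0|-p_{m_0}\le s-1$ (which is precisely why freeness of weight $s-1$ is the right hypothesis, and which enters only through $p_{m_0}\ge1$), and the elementary linear algebra of the projection $\mathbb{R}^{p+1}\to\mathbb{R}^p$.
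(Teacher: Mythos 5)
Your proposal is correct and takes essentially the same route as the paper's proof: freeness of weight $s-1$ for arbitrary $u_j$, reduction of condition 2 to producing constants $a_I$ with $\sum a_I\left(X_{\left[I\right]}\right)_0=0$ but $\sum a_I\left(\widetilde{X}_{\left[I\right]}\right)_0\neq0$, and the computation of the $\partial_t$-components by applying $\widetilde{X}_{\left[I\right]}=\sum_J A_{IJ}\widetilde{X}_J$ to the function $t$, which converts the problem into prescribing the jets $\left(X_{J'}u_j\right)(0)$ via Proposition \ref{Proposition 2} at weight $s-1$ (exactly the weight count $\left\vert J'\right\vert\leq s-1$ used in the paper). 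The only, harmless, difference is that you target a single $J_0$ with a delta-jet choice of one function $u_{m_0}$ and take the remaining $u_m\equiv0$, whereas the paper prescribes constants $c_J$ for all $J$ and applies Proposition \ref{Proposition 2} $n+1$ times.
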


\begin{proof}
Let us show that condition 1 in the above statement holds for any choice of
the functions $u_{j}\left(  x\right)  .$ To see this, we first claim that
(\ref{Xtilde}) implies%
\begin{equation}
\widetilde{X}_{\left[  I\right]  }=X_{\left[  I\right]  }+u_{I}\left(
x\right)  \frac{\partial}{\partial t} \label{X_I}%
\end{equation}
for any multiindex $I$ and some $u_{I}\in C^{\infty}\left(  \mathbb{R}%
^{p}\right)  .$ Namely, we can proceed by induction on $\ell\left(  I\right)
.$ For $\ell\left(  I\right)  =1$, this is just (\ref{Xtilde}); assume
(\ref{X_I}) holds for $\ell\left(  I\right)  =j-1.$ For $\ell\left(  I\right)
=j,$ let $I=\left(  i,J\right)  $ for some $i=0,1,...,n$ and $\ell\left(
J\right)  =j-1.$ Then, by inductive assumption,
\begin{align*}
\widetilde{X}_{\left[  I\right]  }  &  =\widetilde{X}_{\left[  i,J\right]
}=\left[  \widetilde{X}_{i},\widetilde{X}_{\left[  J\right]  }\right]  =\\
&  =\left[  X_{i}+u_{i}\left(  x\right)  \frac{\partial}{\partial
t},X_{\left[  J\right]  }+u_{J}\left(  x\right)  \frac{\partial}{\partial
t}\right]  =\\
&  =\left[  X_{i},X_{\left[  J\right]  }\right]  +\left(  X_{i}u_{J}%
-X_{\left[  J\right]  }u_{i}\right)  \frac{\partial}{\partial t}=\\
&  =X_{\left[  I\right]  }+u_{I}\left(  x\right)  \frac{\partial}{\partial t}.
\end{align*}
Next, we show that (\ref{X_I}) implies that the $\widetilde{X}_{i}$'s are free
of weight $s-1$. If%
\[
\sum_{\left\vert I\right\vert \leq s-1}a_{I}\left(  \widetilde{X}_{\left[
I\right]  }\right)  _{0}=0
\]
for some coefficients $a_{I},$ then by (\ref{X_I}) we have%
\begin{align*}
0  &  =\sum_{\left\vert I\right\vert \leq s-1}a_{I}\left(  X_{\left[
I\right]  }+u_{I}\left(  x\right)  \frac{\partial}{\partial t}\right)  _{0}=\\
&  =\sum_{\left\vert I\right\vert \leq s-1}a_{I}\left(  X_{\left[  I\right]
}\right)  _{0}+\left(  \sum_{\left\vert I\right\vert \leq s-1}a_{I}%
u_{I}\left(  0\right)  \right)  \frac{\partial}{\partial t}.
\end{align*}
Since $\frac{\partial}{\partial t}$ is independent from the vectors $\left(
X_{\left[  I\right]  }\right)  _{0},$ this implies that $\sum_{\left\vert
I\right\vert \leq s-1}a_{I}u_{I}\left(  0\right)  =0$ and%
\[
\sum_{\left\vert I\right\vert \leq s-1}a_{I}\left(  X_{\left[  I\right]
}\right)  _{0}=0.
\]
But the $X_{i}$'s are free of weight $s-1$ at $0$, hence
\[
\sum_{\left\vert I\right\vert \leq s-1}a_{I}A_{IJ}=0\text{ for any }J\text{
with }\left\vert J\right\vert \leq s-1.
\]
Therefore also the $\widetilde{X}_{i}$'s are free of weight $s-1$ at $0.$

We now show that it is possible to choose smooth functions $u_{j}$ such that
condition 2 in the statement of this proposition holds. To show this, we will
prove that there exist functions $u_{j}$ and constants $\left\{
a_{I}\right\}  _{\left\vert I\right\vert \leq s}$ such that:%
\begin{equation}
\sum_{\left\vert I\right\vert \leq s}a_{I}\left(  X_{\left[  I\right]
}\right)  _{0}=0 \label{1}%
\end{equation}%
\begin{equation}
\sum_{\left\vert I\right\vert \leq s}a_{I}\left(  \widetilde{X}_{\left[
I\right]  }\right)  _{0}\neq0 \label{2}%
\end{equation}
From (\ref{1})-(\ref{2}), condition 2 will follow; namely,%
\[
0\neq\sum_{\left\vert I\right\vert \leq s}a_{I}\left(  \widetilde{X}_{\left[
I\right]  }\right)  _{0}=\sum_{\left\vert I\right\vert \leq s}a_{I}\left(
\left(  X_{\left[  I\right]  }\right)  _{0}+u_{I}\left(  0\right)
\frac{\partial}{\partial t}\right)  =\left(  \sum_{\left\vert I\right\vert
\leq s}a_{I}u_{I}\left(  0\right)  \right)  \frac{\partial}{\partial t}%
=b\frac{\partial}{\partial t}%
\]
with $b\neq0,$ hence%
\[
\frac{\partial}{\partial t}=\sum_{\left\vert I\right\vert \leq s}\frac{a_{I}%
}{b}\left(  \widetilde{X}_{\left[  I\right]  }\right)  _{0}%
\]
and this shows that%
\[
\left\langle \left(  \widetilde{X}_{\left[  I\right]  }\right)  _{0}%
\right\rangle _{\left\vert I\right\vert \leq r}=\left\langle \left(
X_{\left[  I\right]  }\right)  _{0}\right\rangle \oplus\left\langle
\frac{\partial}{\partial t}\right\rangle ,
\]
which implies condition 2.

To prove (\ref{1})-(\ref{2}), we use our assumption on the $X_{i}$: since they
are \textit{not} free of weight $s,$ there exist coefficients $\left\{
a_{I}\right\}  _{\left\vert I\right\vert \leq s}$ such that (\ref{1}) holds
but
\begin{equation}
\sum_{\left\vert I\right\vert \leq s}a_{I}A_{IJ}\neq0\text{ for some }J\text{
with }\left\vert J\right\vert \leq s. \label{A_IJ}%
\end{equation}
It remains to prove that there exist functions $u_{j}$ such that (\ref{2})
holds for these $u_{j}$'s and $a_{I}$'s. To determine these $u_{j}$'s, let us
examine the action of the vector field%
\[
\sum_{\left\vert I\right\vert \leq s}a_{I}\widetilde{X}_{\left[  I\right]
}=\sum_{\left\vert I\right\vert \leq s}a_{I}\sum_{\left\vert J\right\vert \leq
s}A_{IJ}\widetilde{X}_{J}%
\]
on the function $t$. For any $J$ with $\left\vert J\right\vert \leq s,$ let us
write $J=\left(  J^{\prime}j\right)  $ for some $j=0,1,...,n.$ Then%
\[
\widetilde{X}_{J}t=\widetilde{X}_{J^{\prime}}\widetilde{X}_{j}t=\widetilde
{X}_{J^{\prime}}\left[  \left(  X_{j}+u_{j}\frac{\partial}{\partial t}\right)
t\right]  =\widetilde{X}_{J^{\prime}}u_{j}=X_{J^{\prime}}u_{j}%
\]
since $u_{j}$ does not depend on $t$. We then have:%
\[
\left(  \sum_{\left\vert I\right\vert \leq s}a_{I}\widetilde{X}_{\left[
I\right]  }\left(  t\right)  \right)  \left(  0\right)  =\sum_{\left\vert
I\right\vert \leq s}a_{I}\sum_{\left\vert J\right\vert \leq s}A_{IJ}%
\sum_{j=0,...,n;\text{ }J=\left(  J^{\prime}j\right)  }\left(  X_{J^{\prime}%
}u_{j}\right)  \left(  0\right)  .
\]
Since $J=\left(  J^{\prime}j\right)  ,$
\[
\left\vert J^{\prime}\right\vert =\left\{
\begin{array}
[c]{l}%
\left\vert J\right\vert -1\text{ for }j=1,2,...,n\\
\left\vert J\right\vert -2\text{ for }j=0
\end{array}
\right.
\]
hence, in any case, $\left\vert J\right\vert \leq s$ implies $\left\vert
J^{\prime}\right\vert \leq s-1.$ Since the $X_{i}$'s are free of weight $s-1$
at $0$, by Proposition 2 for any choice of constants $\left\{  c_{J^{\prime}%
}\right\}  _{\left\vert J^{\prime}\right\vert \leq s-1}$ there exists a
function $u\in C^{\infty}\left(  \mathbb{R}^{p}\right)  $ such that $\left(
X_{J^{\prime}}u\right)  \left(  0\right)  =c_{J^{\prime}}.$ On the other hand,
by (\ref{A_IJ}), there exists a set of constants $\left\{  c_{J}\right\}
_{\left\vert J\right\vert \leq s}$ such that
\[
\sum_{\left\vert I\right\vert \leq s}\sum_{\left\vert J\right\vert \leq
s}a_{I}A_{IJ}c_{J}\neq0.
\]
Setting $c_{J^{\prime}}^{j}=c_{J}$ if $J=\left(  J^{\prime}j\right)  $ and
applying $n+1$ times Proposition 2 to the $n+1$ sets of constants $\left\{
c_{J^{\prime}}^{j}\right\}  _{\left\vert J^{\prime}\right\vert \leq s-1},$
$j=0,1,2,...,n,$ we find $u_{0},u_{1},...,u_{n}$ such that
\[
\left(  \sum_{\left\vert I\right\vert \leq s}a_{I}\widetilde{X}_{\left[
I\right]  }\left(  t\right)  \right)  \left(  0\right)  =\sum_{\left\vert
I\right\vert \leq s}\sum_{\left\vert J\right\vert \leq s}a_{I}A_{IJ}c_{J}%
\neq0.
\]
Hence (\ref{2}) holds. This completes the proof of the proposition.
\end{proof}

\begin{theorem}
[Lifting]\label{Thm Lifting}Let $X_{0},X_{1},...,X_{n}$ be vector fields
satisfying H\"{o}rmander's condition of step $r$ at $x=0$: the vectors
\[
\left\{  \left(  X_{\left[  I\right]  }\right)  _{0}\right\}  _{\left\vert
I\right\vert \leq r}%
\]
span $\mathbb{R}^{p}$. (This clearly implies that such property holds in a
suitable neighborhood of $0$). Then there exist an integer $m$ and vector
fields $\widetilde{X}_{k}$ in $\mathbb{R}^{p+m}$, of the form%
\[
\widetilde{X}_{k}=X_{k}+\sum_{j=1}^{m}u_{kj}\left(  x,t_{1},t_{2}%
,...,t_{j-1}\right)  \frac{\partial}{\partial t_{j}}%
\]
$\left(  k=0,1,...,n\right)  ,$ where the $u_{kj}$'s are polynomials, such
that the $\widetilde{X}_{k}$'s are free of weight $r$ and $\left\{  \left(
\widetilde{X}_{\left[  I\right]  }\right)  _{0}\right\}  _{\left\vert
I\right\vert \leq r}$ span $\mathbb{R}^{p+m}.$
\end{theorem}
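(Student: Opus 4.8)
The plan is to produce the lifted fields by iterating Proposition~\ref{Proposition 3}, adjoining one new variable at a time, and controlling the procedure by a single numerical invariant. For a positive integer $s$, let $N(s)$ be the dimension of the linear span of $\{\xi_{[I]}\}_{|I|\le s}$ inside the free Lie algebra generated by the weighted symbols $\xi_0,\dots,\xi_n$ (with weights $p_i$ as above); since $\xi_{[I]}=\sum_J A_{IJ}\xi_J$ and the monomials $\xi_J$ are linearly independent, $N(s)$ equals the rank of the matrix $\{A_{IJ}\}_{|I|,|J|\le s}$. For an arbitrary system $Y_0,\dots,Y_n$ of vector fields, the rule $\xi_{[I]}\mapsto (Y_{[I]})_0$ extends to a well-defined linear surjection of $\langle\xi_{[I]}\rangle_{|I|\le s}$ onto $\langle (Y_{[I]})_0\rangle_{|I|\le s}$: it is well-defined because $\sum_{|I|\le s}a_I\xi_{[I]}=0$ holds exactly when $\sum_I a_I A_{IJ}=0$ for all $J$, in which case $\sum_{|I|\le s}a_I (Y_{[I]})_0=\sum_J\bigl(\sum_I a_I A_{IJ}\bigr)(Y_J)_0=0$. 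Hence $\dim\langle (Y_{[I]})_0\rangle_{|I|\le s}\le N(s)$, and, comparing with the definition of freeness, equality holds if and only if $Y_0,\dots,Y_n$ are free of weight $s$ at $0$. This is the quantity I shall track, for $s=r$.

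Since the $X_i$ satisfy H\"{o}rmander's condition of step $r$ at $0$, $\dim\langle (X_{[I]})_0\rangle_{|I|\le r}=p$, so $p\le N(r)$; set $m:=N(r)-p$. I construct, by induction on $\ell=0,1,\dots,m$, a system $X^{(\ell)}_0,\dots,X^{(\ell)}_n$ on $\mathbb{R}^{p+\ell}$ of the required shape $X^{(\ell)}_k=X_k+\sum_{j=1}^{\ell}u_{kj}(x,t_1,\dots,t_{j-1})\,\partial_{t_j}$, with polynomial $u_{kj}$ and with $\dim\langle (X^{(\ell)}_{[I]})_0\rangle_{|I|\le r}=p+\ell$. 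For $\ell=0$ this is the original system. Suppose it is done at stage $\ell<m$. Then $\dim\langle (X^{(\ell)}_{[I]})_0\rangle_{|I|\le r}=p+\ell<N(r)$, so that system is not free of weight $r$; since freeness is downward closed, there is a least weight $s\le r$ at which it fails to be free (with the convention that freeness of weight $0$ is vacuous, so possibly $s=1$), and at that $s$ the stage-$\ell$ system is free of weight $s-1$ but not of weight $s$ --- precisely the hypotheses of Proposition~\ref{Proposition 3}. Its proof, which rests on Proposition~\ref{Proposition 2}, furnishes a function $u_j$ that may be taken to be a homogeneous polynomial in the current base variables $(x,t_1,\dots,t_\ell)$; setting $X^{(\ell+1)}_k:=X^{(\ell)}_k+u_{k,\ell+1}(x,t_1,\dots,t_\ell)\,\partial_{t_{\ell+1}}$ on $\mathbb{R}^{p+\ell+1}$, part~1 of Proposition~\ref{Proposition 3} keeps the system free of weight $s-1$, while part~2, read with its ``$r$'' equal to our $r$ (which is $\ge s$), yields $\dim\langle (X^{(\ell+1)}_{[I]})_0\rangle_{|I|\le r}=p+\ell+1$. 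The triangular polynomial shape is inherited, since Proposition~\ref{Proposition 3} only appends one summand to each field and does not touch the previous coefficients. This closes the induction.

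At stage $\ell=m$ we have $\dim\langle (X^{(m)}_{[I]})_0\rangle_{|I|\le r}=p+m=N(r)$, hence by the first paragraph $X^{(m)}_0,\dots,X^{(m)}_n$ are free of weight $r$; and since $p+m=\dim\mathbb{R}^{p+m}$, the vectors $\{(X^{(m)}_{[I]})_0\}_{|I|\le r}$ span $\mathbb{R}^{p+m}$. Taking $\widetilde{X}_k:=X^{(m)}_k$, with $m$ the integer just defined, gives the asserted lifting.

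As I see it, the only real friction lies in two places, everything else being bookkeeping. First, the equivalence ``free of weight $s$ $\iff$ $\dim\langle (Y_{[I]})_0\rangle_{|I|\le s}=N(s)$'' together with the fact that a single application of Proposition~\ref{Proposition 3} raises $\dim\langle (Y_{[I]})_0\rangle_{|I|\le r}$ by exactly one: these are what force the iteration to stop after exactly $m$ steps and to end with a system spanning the whole ambient space, rather than stalling early or producing a space of the wrong dimension. Second, the degenerate initial case $s=1$ --- arranging that $X_1,\dots,X_n$ become linearly independent --- where one has to check that the arguments in Propositions~\ref{Proposition 2} and~\ref{Proposition 3} still function when the family of prescribed values indexed by $|J'|\le s-1=0$ is empty. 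All the substantive work, namely constructing the new coefficient and verifying that freeness at the lower weights survives, has already been done in Propositions~\ref{Proposition 2} and~\ref{Proposition 3}.
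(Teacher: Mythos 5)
Your proposal is correct and is essentially the paper's own argument: you iterate Proposition \ref{Proposition 3}, adding one variable at a time, with termination controlled by the rank of the universal matrix $\left[  A_{IJ}\right]  _{\left\vert I\right\vert ,\left\vert J\right\vert \leq r}$ --- your $N(r)$ is exactly the paper's constant $c\left(  r,n\right)  $ in (\ref{p<c}). The only differences are refinements of the bookkeeping rather than a new route: you prove the sharper equivalence \textquotedblleft free of weight $r$ at $0$ if and only if $\dim\left\langle \left(  X_{\left[  I\right]  }\right)  _{0}\right\rangle _{\left\vert I\right\vert \leq r}=N(r)$\textquotedblright, which pins the number of added variables at exactly $m=N(r)-p$ and makes the final spanning statement automatic, where the paper argues only that the iteration must eventually stop at a system free of weight $r$; also note the minor slip that the functions supplied by Proposition \ref{Proposition 2} are polynomials but not in general homogeneous ones, which is all the theorem requires.
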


This theorem has an obvious reformulation in any point $x_{0}\in\mathbb{R}%
^{p},$ with the lifted vector fields defined in a neighborhood of $\left(
x_{0},0\right)  \in\mathbb{R}^{p+m}.$

\begin{proof}
Let $\left\{  \left(  X_{\left[  I\right]  }\right)  _{0}\right\}  _{I\in B}$
be a basis of $\mathbb{R}^{p},$ for some set $B$ of $p$ multiindices of weight
$\leq r.$ We claim that%
\[
\text{rank}\left[  A_{IJ}\right]  _{I\in B,\left\vert J\right\vert \leq r}\geq
p,
\]
because the $p$ vectors $\left(  A_{IJ}\right)  _{\left\vert J\right\vert \leq
r}$, with $I\in B$, are independent. Namely, if for some constants $\left\{
a_{I}\right\}  _{I\in B}$
\[
\sum_{I\in B}a_{I}A_{IJ}=0\text{ for any }J\text{ with }\left\vert
J\right\vert \leq r,
\]
then%
\[
\sum_{I\in B}a_{I}\left(  X_{\left[  I\right]  }\right)  _{0}=\sum_{I\in
B}a_{I}\sum_{\left\vert J\right\vert \leq r}A_{IJ}\left(  X_{J}\right)
_{0}=\sum_{\left\vert J\right\vert \leq r}\left(  \sum_{I\in B}a_{I}%
A_{IJ}\right)  \left(  X_{J}\right)  _{0}=0.
\]
But $\left\{  \left(  X_{\left[  I\right]  }\right)  _{0}\right\}  _{I\in B}$
is a basis, hence $\sum_{I\in B}a_{I}\left(  X_{\left[  I\right]  }\right)
_{0}=0$ implies $a_{I}=0$ for any $I\in B$.

The relation just proved means that%
\begin{equation}
p\leq\text{rank}\left[  A_{IJ}\right]  _{\left\vert I\right\vert ,\left\vert
J\right\vert \leq r}\equiv c\left(  r,n\right)  , \label{p<c}%
\end{equation}
an absolute constant only depending on $r,n$.

Now, let $s\leq r$ be such that $X_{0},X_{1},...,X_{n}$ are free of weight
$s-1$ but not of weight $s$, at $0$. (If the $X_{i}$'s were already free of
weight $r$, there would be nothing to prove. We also agree to say that the
$X_{i}$'s are free of weight $0$ if they are not free of weight $1$). We can
then apply Proposition \ref{Proposition 3} and build vector fields%
\[
\widetilde{X}_{j}=X_{j}+u_{j}\left(  x\right)  \frac{\partial}{\partial
t}\text{ }\left(  j=0,1,...,n\right)
\]
in $\mathbb{R}^{p+1},$ free of weight $s-1$ and such that%
\[
\dim\left\langle \left(  \widetilde{X}_{\left[  I\right]  }\right)
_{0}\right\rangle _{\left\vert I\right\vert \leq r}=\dim\left\langle \left(
X_{\left[  I\right]  }\right)  _{0}\right\rangle _{\left\vert I\right\vert
\leq r}+1=p+1
\]
(because by assumption the $\left\{  \left(  X_{\left[  I\right]  }\right)
_{0}\right\}  _{\left\vert I\right\vert \leq r}$ span $\mathbb{R}^{p}$). Hence
the $\left\{  \left(  \widetilde{X}_{\left[  I\right]  }\right)  _{0}\right\}
_{\left\vert I\right\vert \leq r}$ still span the whole space $\mathbb{R}%
^{p+1}.$ Now: either the vector fields $\left\{  \left(  \widetilde
{X}_{\left[  I\right]  }\right)  _{0}\right\}  _{\left\vert I\right\vert \leq
r}$ are free of weight $r$, and we are done, or the assumptions of Proposition
\ref{Proposition 3} are still satisfied, and we can iterate our argument; in
this case, by (\ref{p<c}), condition $p+1\leq c\left(  r,n\right)  $ must
hold. After a suitable finite number $m$ of iterations, condition $p+m\leq
c\left(  r,n\right)  $ cannot hold anymore, and this means that the vector
fields $\widetilde{X}_{j}$ must be free of weight $r$. The iterative argument
also shows that the $u_{kj}$'s are polynomials only depending on the variables
$x,t_{1},t_{2},...,t_{j-1}$.
\end{proof}

\section{Approximation of free smooth vector
fields\label{section approximation}}

In this section we carry out the second part of Rothschild-Stein's procedure,
that is, the approximation of free vector fields by left invariant vector
fields on a homogeneous group. Here we concentrate on \emph{smooth }vector
fields, while the nonsmooth theory will be treated in Section
\ref{Section nonsmooth}. We actually prove a somewhat more general result than
that by Rothschild-Stein, in the line of \cite{HM}. In \S 2.5, we will also
prove, for free vector fields, a ball-box theorem and the resulting estimate
on the volume of metric balls, in the spirit of Nagel-Stein-Wainger's results.

By the lifting theorem (Theorem \ref{Thm Lifting}), starting from any system
of smooth H\"{o}rmander's vector fields in $\mathbb{R}^{p}$ we can define new
vector fields $\widetilde{X}_{0},\ldots,\widetilde{X}_{n}$ in a neighborhood
of $0\in\mathbb{R}^{p+m}$, free up to weight $r$ at $0$ and such that
$\left\{  \widetilde{X}_{\left[  I\right]  }\left(  0\right)  \right\}
_{\left\vert I\right\vert \leq s}$ spans $\mathbb{R}^{p+m}.$ Just to simplify
notation, throughout this section we will keep calling $X_{i}$ and
$\mathbb{R}^{p}$ these lifted free vector fields and their underlying space, respectively.

\bigskip

Therefore, let now $X_{0},X_{1},...,X_{n}$ be a system of \emph{smooth}
H\"{o}rmander's vector fields in $\Omega\subset\mathbb{R}^{p}$, \emph{free up
to weight} $r$ and satisfying H\"{o}rmander's condition of step $r$ in
$\Omega$. Since the $X_{i}$'s are free, it is possible to choose a set $B$ of
$p$ multiindices $I$ with $\left\vert I\right\vert \leq r,\ $such that
$\left\{  X_{\left[  I\right]  }\right\}  _{I\in B}$ is a basis of
$\mathbb{R}^{p}$ at any point $x\in\Omega.$ We assume this set $B$ fixed once
and for all.

\subsection{Canonical coordinates and weights of vector
fields\label{subsection canonical coordinates}}

Let us recall the standard definition of \textit{exponential of a vector
field}. We set:%
\[
\exp\left(  tX\right)  \left(  \overline{x}\right)  =\varphi\left(  t\right)
\]
where $\varphi$ is the solution to the Cauchy problem%
\begin{equation}
\left\{
\begin{array}
[c]{l}%
\varphi^{\prime}\left(  \tau\right)  =X_{\varphi\left(  \tau\right)  }\\
\varphi\left(  0\right)  =\overline{x}%
\end{array}
\right.  \label{Cauchy2}%
\end{equation}
The point $\exp\left(  tX\right)  \left(  \overline{x}\right)  $ is uniquely
defined for $t\in\mathbb{R}$ small enough, as soon as $X$ has Lipschitz
continuous coefficients, by the classical Cauchy's theorem about existence and
uniqueness for solutions to Cauchy problems. For a fixed $\Omega^{\prime
}\Subset\Omega,$ a $t$-neighborhood of zero where $\exp\left(  tX\right)
\left(  \overline{x}\right)  $ is defined can be found uniformly for
$\overline{x}$ ranging in $\Omega^{\prime}$.

Equivalently, we can write%
\[
\exp\left(  tX\right)  \left(  \overline{x}\right)  =\phi\left(  1\right)
\]
where $\phi$ is the solution to the Cauchy problem%
\[
\left\{
\begin{array}
[c]{l}%
\phi^{\prime}\left(  \tau\right)  =tX_{\phi\left(  \tau\right)  }\\
\phi\left(  0\right)  =\overline{x}.
\end{array}
\right.
\]

Now, for any $\overline{x}\in\Omega,$ let us introduce the set of local
(\textquotedblleft canonical\textquotedblright) coordinates%
\begin{equation}
\mathbb{R}^{p}\backepsilon u\longmapsto\exp\left(  \sum_{I\in B}%
u_{I}X_{\left[  I\right]  }\right)  \left(  \overline{x}\right)  ,
\label{can-coos}%
\end{equation}
defined for $u$ in a suitable neighborhood of $0.$ Note that the Jacobian of
the map $u\longmapsto x,$ at $u=0,$ equals the matrix of the vector fields
$\left\{  \left(  X_{\left[  I\right]  }\right)  _{\overline{x}}\right\}
_{I\in B},$ therefore is nonsingular. This allows to define canonical
coordinates in a suitable neighborhood $U\left(  \overline{x}\right)  $ of
$\overline{x}$.

Since the basis $\left\{  \left(  X_{\left[  I\right]  }\right)
_{\overline{x}}\right\}  _{I\in B}$ depends continuously on the point
$\overline{x},$ the radius of this neighborhood can be taken uniformly bounded
away from zero for $\overline{x}$ ranging in a compact set.

Henceforth in this section, all the computation will be made with respect to
this system of coordinates defined in a neighborhood of the point
$\overline{x}$ (which has canonical coordinates $u=0$).

Our aim is to establish some basic properties enjoyed by the vector fields
$X_{\left[  I\right]  },$ if they are expressed with respect to canonical
coordinates, in particular Theorem \ref{Thm weight of X_[I]}, which will be a
key tool for the following.

We start with the following:

\begin{lemma}
If we express the vector fields $X_{\left[  I\right]  }$ with respect to the
above coordinates $u$, then we have that
\begin{equation}
\sum_{I\in B}u_{I}\frac{\partial}{\partial u_{I}}=\sum_{I\in B}u_{I}X_{\left[
I\right]  }. \label{Euler}%
\end{equation}
(In the following, we will also write $e_{I}$ for $\frac{\partial}{\partial
u_{I}}$).
\end{lemma}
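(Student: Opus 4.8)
The plan is to exploit the fact that in canonical coordinates, radial scaling along $u$ corresponds exactly to following the flow of the vector field $\sum_{I\in B} u_I X_{[I]}$. Concretely, fix $u=(u_I)_{I\in B}$ in the coordinate neighborhood and consider the curve $\lambda\mapsto \gamma(\lambda)$ whose canonical coordinates are $\lambda u$. By the very definition of canonical coordinates (\ref{can-coos}), $\gamma(\lambda)=\exp\bigl(\lambda\sum_{I\in B}u_I X_{[I]}\bigr)(\overline x)$, so $\gamma$ solves the Cauchy problem $\gamma'(\lambda)=\bigl(\sum_{I\in B}u_I X_{[I]}\bigr)_{\gamma(\lambda)}$, $\gamma(0)=\overline x$. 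On the other hand, reading $\gamma$ in the $u$-coordinates, its coordinate representation is literally the straight line $\lambda\mapsto \lambda u$, whose velocity at parameter $\lambda$ is the constant vector $u=(u_I)_I$, i.e. the tangent vector $\sum_{I\in B}u_I\,\partial/\partial u_I$ evaluated at the point with coordinates $\lambda u$.

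**Next I would** equate these two expressions for the velocity of the same curve. Since the map from $u$-coordinates to points is a diffeomorphism on the neighborhood, a tangent vector is determined by either description, and we get
\[
\Bigl(\sum_{I\in B}u_I X_{[I]}\Bigr)_{\gamma(\lambda)}=\Bigl(\sum_{I\in B}u_I\frac{\partial}{\partial u_I}\Bigr)_{\gamma(\lambda)}
\]
for all small $\lambda$ and all $u$ in the neighborhood. Taking $\lambda=1$ (legitimate after shrinking the neighborhood, or by rescaling $u$) gives the identity (\ref{Euler}) as an equality of vector fields at every point of the coordinate patch, since every such point is $\gamma(1)$ for some admissible $u$.

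**The main subtlety** — really the only thing requiring care — is the regularity needed to invoke existence and uniqueness for the Cauchy problem (\ref{Cauchy2}): the vector fields $X_{[I]}$ must have at least Lipschitz coefficients so that the flow $\exp(tX)$ is well-defined and the curve $\gamma$ is genuinely the integral curve claimed. This is guaranteed here because in this section we are working with \emph{smooth} free vector fields, so all $X_{[I]}$ with $|I|\le r$ are smooth by the Lemma in \S\ref{subsec assumptions} (and in any case the canonical coordinate map is already known to be a diffeomorphism near $\overline x$, as noted after (\ref{can-coos})). A secondary bookkeeping point is that $\sum_{I\in B}u_I X_{[I]}$, with the $u_I$ treated as fixed scalars, is itself a legitimate smooth vector field, so that $\exp\bigl(\lambda\sum u_I X_{[I]}\bigr)$ makes sense and depends smoothly on $(\lambda,u,\overline x)$; this is what lets us differentiate in $\lambda$ and conclude. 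Beyond these observations the proof is immediate, so I would present it essentially as the two-line flow argument above.
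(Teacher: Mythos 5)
Your proof is correct and is essentially the paper's own argument in dual form: both rest on the observation that, by the rescaling property of the exponential, the curve $\lambda\mapsto\exp\bigl(\lambda\sum_{I\in B}u_I X_{[I]}\bigr)(\overline x)$ is the radial line $\lambda\mapsto\lambda u$ in canonical coordinates. The paper extracts the identity by letting both sides act on the coordinate functions $u_J$ and differentiating along this flow, while you equate the two descriptions of the curve's velocity at $\lambda=1$; this is the same computation, and your handling of the regularity needed for the flow matches the paper's standing smoothness assumptions.
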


\begin{proof}
We start noting that, If $Y=\sum_{I\in B}y_{I}\left(  u\right)  \frac
{\partial}{\partial u_{I}}$ and $Z=\sum_{I\in B}z_{I}\left(  u\right)
\frac{\partial}{\partial u_{I}}$ are two vector fields such that%
\[
Z\left(  u_{J}\right)  =Y\left(  u_{J}\right)  \text{ for any }J\in B,
\]
(that is, the vector fields act at the same way on the functions $u\mapsto
u_{J}$) then $y_{I}\left(  u\right)  =z_{I}\left(  u\right)  $ for any $I\in
B,$ hence $Y=Z$. Therefore, it will be enough to show that%
\[
\left(  \sum_{I\in B}u_{I}X_{\left[  I\right]  }\right)  \left(  u_{J}\right)
=\left(  \sum_{I\in B}u_{I}\frac{\partial}{\partial u_{I}}\right)  \left(
u_{J}\right)  \text{.}%
\]
Now, for any vector field $Y,$%
\[
Yf\left(  x\right)  =\frac{d}{dt}\left(  f\left(  \exp\left(  tY\right)
\left(  \overline{x}\right)  \right)  \right)  _{/t=t_{0}}\text{ where }%
x=\exp\left(  t_{0}Y\right)  \left(  c\right)  \text{.}%
\]
Hence, if $Y=\sum_{I\in B}u_{I}X_{\left[  I\right]  }$, then
\[
\left(  \sum_{I\in B}u_{I}X_{\left[  I\right]  }\right)  \left(  u_{J}\right)
=\frac{d}{dt}\left(  u_{J}\left(  \exp\left(  t\sum_{I\in B}u_{I}X_{\left[
I\right]  }\right)  \left(  \overline{x}\right)  \right)  \right)  _{/t=t_{0}}%
\]
just by definition of the coordinates $u_{I}$%
\[
=\frac{d}{dt}\left(  tu_{J}\right)  _{/t=t_{0}}=u_{J}=\left(  \sum_{I\in
B}u_{I}\frac{\partial}{\partial u_{I}}\right)  \left(  u_{J}\right)  .
\]

\end{proof}

\begin{definition}
[Weights]\label{Definition weights}We now assign the weight $\left\vert
I\right\vert $ to the coordinate $u_{I}$ and the weight $-\left\vert
I\right\vert $ to $\frac{\partial}{\partial u_{I}}$. (Note that this is the
convention made in \cite{HM}, and is \textit{different} from that made in
\cite{F1} and \cite{RS}: in the last two papers, the authors assign positive
weight also to derivatives). In the following we will say that a $C^{\infty}$
function $f$ has weight $\geqslant s$ if the Taylor expansion of $f$ at the
origin does not include terms of the kind $au_{I_{1}}u_{I_{2}}\cdots u_{I_{k}%
}$ with $a\neq0$ and $\left\vert I_{1}\right\vert +\left\vert I_{2}\right\vert
+\ldots+\left\vert I_{k}\right\vert <s$. A vector field $Y=\sum_{I\in B}%
f_{I}e_{I}$ has weight $~\geqslant s$ if $f_{I}$ has weight $~\geqslant
s+\left\vert I\right\vert $ for every $I\in B$.
\end{definition}

Note that the weight of a function is always $\geq0$, while the weight of a
vector field is $\geq-r,$ where $r$ is as above.

We want to stress that the definition of weight relies on the canonical
coordinates, therefore it depends on the choice of a particular basis $B$ of
$\mathbb{R}^{p}.$

In the following we shall denote with $F_{s}^{q}$ the set of functions such
that in their Taylor expansion of degree $\leqslant q$ (in the standard
sense), all terms have weight $\geq s$. Also $V_{s}^{q}$ will denote the set
of the vector fields with a similar property. The subset of $F_{s}^{q}$ and
$V_{s}^{q}$ of elements that vanish at $u=0$ will be denoted by $\mathring
{F}_{s}^{q}$ and $\mathring{V}_{s}^{q}.$

\begin{lemma}
\label{Inclusions}The following inclusions hold
\[%
\begin{array}
[c]{ccc}%
F_{s}^{q}F_{t}^{q}\subset F_{s+t}^{q} & \mathring{F}_{s}^{q}F_{t}^{q-1}%
\subset\mathring{F}_{s+t}^{q} & \\
F_{s}^{q}V_{t}^{q}\subset V_{s+t}^{q} & \mathring{F}_{s}^{q}V_{t}^{q-1}%
\subset\mathring{V}_{s+t}^{q} & F_{s}^{q-1}\mathring{V}_{t}^{q}\subset
\mathring{V}_{s+t}^{q}\\
V_{s}^{q-1}\left(  F_{s}^{q}\right)  \subset V_{s+t}^{q-1} & \mathring{V}%
_{s}^{q}\left(  F_{t}^{q}\right)  \subset\mathring{F}_{s+t}^{q} & \\
\left[  V_{s}^{q},V_{t}^{q}\right]  \subset V_{s+t}^{q-1} & \left[
\mathring{V}_{s}^{q},V_{t}^{q-1}\right]  \subset V_{s+t}^{q-1} &
\end{array}
\]
with the obvious meaning of the symbols.
\end{lemma}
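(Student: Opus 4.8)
The plan is to reduce the entire table to two completely elementary facts about the weighted grading of Taylor polynomials, and then read off each line mechanically. First fix the bookkeeping: for an ordinary multiindex $\alpha=(\alpha_{I})_{I\in B}$ write $|\alpha|=\sum_{I}\alpha_{I}$ for its length and $\|\alpha\|=\sum_{I}\alpha_{I}|I|$ for its weight, so that the monomial $u^{\alpha}=\prod_{I}u_{I}^{\alpha_{I}}$ has ordinary degree $|\alpha|$ and weight $\|\alpha\|$; then $f\in F_{s}^{q}$ iff the Taylor expansion of $f$ at $0$ contains no monomial $u^{\alpha}$ with $|\alpha|\le q$ and $\|\alpha\|<s$, $f\in\mathring{F}_{s}^{q}$ iff moreover $f(0)=0$, and (with the natural convention) $Y=\sum_{J\in B}f_{J}e_{J}$ lies in $V_{s}^{q}$ iff $f_{J}\in F_{s+|J|}^{q}$ for all $J$, in $\mathring{V}_{s}^{q}$ iff moreover each $f_{J}\in\mathring{F}_{s+|J|}^{q}$. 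The two facts are: (i) \emph{products add weights} --- if $f\in F_{s}^{q}$ and $g\in F_{t}^{q'}$ then $fg\in F_{s+t}^{\min(q,q')}$, because a monomial $u^{\gamma}$ of the product splits as $u^{\alpha}u^{\beta}$ with $\|\gamma\|=\|\alpha\|+\|\beta\|$ and $|\gamma|=|\alpha|+|\beta|$; and if in addition $f\in\mathring{F}_{s}^{q}$ (so every monomial of $f$ has $|\alpha|\ge1$), then every monomial of $fg$ of degree $\le q$ uses only monomials of $g$ of degree $\le q-1$, whence $fg\in\mathring{F}_{s+t}^{q}$ already when $g\in F_{t}^{q-1}$; and (ii) \emph{one derivative costs one degree and $|J|$ units of weight} --- $\partial_{u_{J}}$ maps $F_{s}^{q}$ into $F_{s-|J|}^{q-1}$, since it sends $u^{\alpha}$ to a multiple of $u^{\alpha-e_{J}}$ with $|\alpha-e_{J}|=|\alpha|-1$, $\|\alpha-e_{J}\|=\|\alpha\|-|J|$ (vanishing at $0$ is in general destroyed, so $\mathring{F}_{s}^{q}$ is only sent into $F_{s-|J|}^{q-1}$).

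With (i) and (ii) the table is routine. The two product lines $F_{s}^{q}F_{t}^{q}\subset F_{s+t}^{q}$ and $F_{s}^{q}V_{t}^{q}\subset V_{s+t}^{q}$ are (i) directly (for the second, $f g_{J}\in F_{s+(t+|J|)}^{q}=F_{(s+t)+|J|}^{q}$). The action line (the printed ``$V_{s}^{q-1}(F_{s}^{q})\subset V_{s+t}^{q-1}$'' should read $V_{s}^{q-1}(F_{t}^{q})\subset F_{s+t}^{q-1}$) follows by writing $Yf=\sum_{J}f_{J}\,\partial_{u_{J}}f$, using (ii) to get $\partial_{u_{J}}f\in F_{t-|J|}^{q-1}$ and then (i) to get $f_{J}\,\partial_{u_{J}}f\in F_{(s+|J|)+(t-|J|)}^{q-1}=F_{s+t}^{q-1}$ and summing. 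The bracket line $[V_{s}^{q},V_{t}^{q}]\subset V_{s+t}^{q-1}$ follows from the identity $[Y,Z]=\sum_{K}\bigl(\sum_{J}y_{J}\,\partial_{u_{J}}z_{K}-\sum_{J}z_{J}\,\partial_{u_{J}}y_{K}\bigr)e_{K}$ (the pure second-order terms cancel) together with the two facts just used, applied coefficient by coefficient. The middle and right columns are the same computations carried out with one extra piece of bookkeeping: whenever a factor lies in some $\mathring{F}$ it has ordinary degree $\ge1$, which is precisely what lets the other factor lose one Taylor degree while the conclusion stays at level $q$ and retains its hole at $0$; this gives $\mathring{F}_{s}^{q}F_{t}^{q-1}\subset\mathring{F}_{s+t}^{q}$, $\mathring{F}_{s}^{q}V_{t}^{q-1}\subset\mathring{V}_{s+t}^{q}$, $F_{s}^{q-1}\mathring{V}_{t}^{q}\subset\mathring{V}_{s+t}^{q}$, $\mathring{V}_{s}^{q}(F_{t}^{q})\subset\mathring{F}_{s+t}^{q}$ (here $\partial_{u_{J}}f$ need not vanish at $0$, but the coefficient $y_{J}\in\mathring{F}$ does, so the product does), and $[\mathring{V}_{s}^{q},V_{t}^{q-1}]\subset V_{s+t}^{q-1}$ from the same bracket identity.

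No step here is genuinely hard; the one place needing care --- and where I would be most afraid of an off-by-one error --- is the exact matching of the superscripts $q$ versus $q-1$ in each line, i.e.\ checking that the one Taylor degree lost to each differentiation, and the one degree that a factor vanishing at $0$ allows the other factor to borrow, are accounted for consistently with the sharp forms of (i) and (ii). I would therefore pin down the conventions for $V_{s}^{q}$ and $\mathring{V}_{s}^{q}$ and the precise statements of (i) and (ii) at the very outset, after which each inclusion in the table becomes a one-line verification.
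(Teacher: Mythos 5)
Your proof is correct and takes essentially the same approach as the paper, which verifies only the first inclusion (weights of Taylor monomials add under multiplication) and dismisses the rest as "similar arguments"; your two bookkeeping facts and the coefficientwise treatment of $V_s^q$, $\mathring V_s^q$ match the paper's definitions exactly. Your reading of the misprinted entry as $V_{s}^{q-1}\left(F_{t}^{q}\right)\subset F_{s+t}^{q-1}$ is also the intended one, so nothing further is needed.
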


\begin{proof}
If $f\in F_{s}^{q}$ and $g\in F_{t}^{q}$, then all terms of the product of
their Taylor expansion of degree $\leqslant q$ have weight $\geq s+t$.
Therefore, the same is true for the Taylor expansion of degree $\leqslant q$
of $fg$, so that $fg\in F_{s+t}^{q}$. This shows that the first inclusion
holds, and the second one is an immediate corollary. The other inclusions can
be proved by means of similar arguments.
\end{proof}

For any vector field $X_{\left[  J\right]  }$ with $\left\vert J\right\vert
\leq s,$ we can express $X_{\left[  J\right]  }$ in terms of the basis
$\left\{  X_{\left[  I\right]  }\right\}  _{I\in B},$ writing%
\[
X_{\left[  J\right]  }=\sum_{I\in B}c_{JI}\left(  u\right)  X_{\left[
I\right]  }%
\]
for suitable functions $c_{JI}.$ \hfill

\begin{theorem}
\label{Thm weight of X_[I]}For every multiindices $I$ the vector field
$X_{\left[  I\right]  }$ has weight $\geqslant-\left\vert I\right\vert .$
\end{theorem}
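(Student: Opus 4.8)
The plan is to prove this by induction on $\ell(I)$, the length of the multiindex $I$, using the structural inclusions of Lemma~\ref{Inclusions} together with the Euler-type identity \eqref{Euler}. The base case $\ell(I)=1$ is the heart of the matter: here $I=(i)$ for a single index, $X_{[I]}=X_i$, and we must show $X_i$ has weight $\geq -p_i$ (so $\geq -1$ for $i\geq 1$ and $\geq -2$ for $i=0$). The inductive step will be comparatively soft, because if $I=(i,J)$ then $X_{[I]}=[X_i,X_{[J]}]$, and if we already know $X_i\in V_{-p_i}^{q}$ and $X_{[J]}\in V_{-|J|}^{q}$ for all relevant truncation orders $q$, then the bracket inclusion $[V_s^q,V_t^q]\subset V_{s+t}^{q-1}$ from Lemma~\ref{Inclusions} gives $X_{[I]}\in V_{-p_i-|J|}^{q-1}=V_{-|I|}^{q-1}$; letting $q$ be large enough (the vector fields are smooth, so we may truncate at any order and the weight statement is about finitely many Taylor coefficients) yields $X_{[I]}\in V_{-|I|}$, i.e.\ weight $\geq -|I|$.

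**The base case.** To handle $\ell(I)=1$, I would start from \eqref{Euler}, which in the canonical coordinates reads $E:=\sum_{I\in B}u_I e_I=\sum_{I\in B}u_I X_{[I]}$, where $E$ is the Euler (dilation-generating) vector field. The vector field $E$ manifestly has weight $\geq 0$: its $I$-th component is $u_I$, which has weight exactly $|I|$, matching the requirement $\geq 0+|I|$. Now I want to extract information about a single $X_{[I]}$ from this sum. The standard device is to differentiate or to use a homogeneity/scaling argument: writing $\delta_\lambda u$ for the dilation that scales $u_I$ by $\lambda^{|I|}$, one checks that $E$ generates $\delta_\lambda$, and then $X_{[I]}$, expressed in coordinates as $\sum_{K\in B}(X_{[I]})_K(u)e_K$, can be recovered by applying $e_I$ (the coordinate derivative) to the relation $\sum u_I X_{[I]}(u_K)=u_K$ and tracking how the coefficient functions behave. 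Concretely: apply $\partial/\partial u_I$ at a general point to \eqref{Euler} acting on the coordinate function $u_J$; since $\big(\sum_I u_I X_{[I]}\big)(u_J)=u_J$, differentiating gives $X_{[I]}(u_J)+\sum_{K}u_K\,\partial_{u_I}\big(X_{[K]}(u_J)\big)=\delta_{IJ}$. This is a first-order linear system; combined with the fact that at $u=0$ the $X_{[I]}$ form the coordinate basis (so $X_{[I]}(u_J)(0)=\delta_{IJ}$), one solves for the Taylor coefficients of $X_{[I]}(u_J)$ order by order and reads off that no term of weight $<|J|-|I|$ can appear — which is exactly the assertion that $X_{[I]}$, hence in particular $X_i=X_{[(i)]}$, has weight $\geq -|I|$.

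**Main obstacle.** I expect the genuine difficulty to lie in making the base-case argument clean rather than in the inductive step. The subtlety is that the weight of a \emph{function} is defined via the graded structure of the Taylor expansion where $u_I$ carries weight $|I|$, which is \emph{not} the ordinary degree; so "solving order by order" must be done with respect to this grading, and one must verify that the linear recursion coming from differentiating \eqref{Euler} respects it — i.e.\ that the operator $f\mapsto \sum_K u_K\,\partial_{u_I}f$ raises weight by at least $|I|$ (this is essentially $\mathring V_{-|I|}^q$ acting, via the inclusions $\mathring V_s^q(F_t^q)\subset\mathring F_{s+t}^q$, but one must set it up carefully). A secondary bookkeeping point is the loss of one truncation order at each bracket in the inductive step ($q\to q-1$); since $\ell(I)$ is bounded (by $r$, as $|I|\leq r$) and the fields are $C^\infty$, one fixes $q=q_0+r$ at the outset for whatever finite $q_0$ the weight claim requires, so this loss is harmless — but it should be stated. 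Once these two points are pinned down, assembling the full induction is routine.
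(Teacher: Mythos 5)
Your inductive step is fine as far as it goes: granting that each generator $X_{i}$ lies in $V_{-p_{i}}^{q}$ for every $q$ and that the claim holds for shorter multiindices, the bracket inclusion $\left[  V_{s}^{q},V_{t}^{q}\right]  \subset V_{s+t}^{q-1}$ of Lemma \ref{Inclusions} does give $X_{\left[  I\right]  }\in V_{-\left\vert I\right\vert }^{q-1}$, and the loss of one truncation order per bracket is harmless for smooth fields. The genuine gap is the base case, and it is not merely a matter of ``setting it up carefully'': the mechanism you propose cannot work. Differentiating (\ref{Euler}) applied to $u_{J}$ gives exactly the scalar form of the paper's identity (\ref{aaa}), namely $X_{\left[  I\right]  }(u_{J})+\sum_{K\in B}u_{K}\,\partial_{u_{I}}\bigl(X_{\left[  K\right]  }(u_{J})\bigr)=\delta_{IJ}$, but this system, together with $X_{\left[  K\right]  }(u_{J})(0)=\delta_{KJ}$, neither determines the Taylor coefficients of the $X_{\left[  K\right]  }(u_{J})$ nor forces the weight bounds. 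Concretely, whenever $B$ contains two multiindices $I_{1},I_{2}$ of weight one and one multiindex $L$ with $\left\vert L\right\vert \geq3$ (e.g. $n\geq2$, $r\geq3$), the family $Z_{I_{1}}=e_{I_{1}}+u_{I_{2}}e_{L}$, $Z_{I_{2}}=e_{I_{2}}-u_{I_{1}}e_{L}$, $Z_{K}=e_{K}$ for all other $K\in B$ satisfies $\sum_{K\in B}u_{K}Z_{K}=\sum_{K\in B}u_{K}e_{K}$, hence also every relation obtained from it by differentiation, and $Z_{K}(0)=e_{K}$; yet $Z_{I_{1}}$ has weight only $\geq-2$, since the coefficient $u_{I_{2}}$ of $e_{L}$ has weight $1<\left\vert L\right\vert -1$. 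So the Euler identity plus the values at the origin is strictly weaker than the conclusion, and any ``order by order'' scheme based on it alone must fail.

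What is missing is precisely the structural input that the $X_{\left[  I\right]  }$ are iterated commutators of the generators, so that by the Jacobi identity $\left[  X_{\left[  I\right]  },X_{\left[  J\right]  }\right]  $ is a universal constant-coefficient combination of the $X_{\left[  L\right]  }$ with $\left\vert L\right\vert =\left\vert I\right\vert +\left\vert J\right\vert $. The paper's proof runs an induction on the truncation order $q$, simultaneously for all $I\in B$; your separation into a base case $\ell(I)=1$ plus a soft bracket step cannot be sustained, because (\ref{Euler}) couples all the coordinates $u_{K}$, $K\in B$, so any recursion built on it needs weight information on all the $X_{\left[  K\right]  }$ at once, and the generators are not easier than the rest. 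In the paper, composing (\ref{aaa}) with $\operatorname*{ad}X_{\left[  J\right]  }$, using Jacobi, and using (\ref{Euler}) to trade $X_{\left[  K\right]  }$ for $e_{K}$ up to admissible errors leads to $TW\in V_{-\left\vert I\right\vert -\left\vert J\right\vert }^{q}$ for $W=\operatorname*{ad}X_{\left[  J\right]  }\left(  e_{I}\right)  $, where $Tf=2f+\sum_{K}u_{K}\partial_{u_{K}}f$ multiplies a degree-$\mu$ term by $2+\mu$ and is therefore invertible on Taylor expansions; this is the step that actually produces the gain in $q$, and nothing in your base-case sketch plays its role (the paper also carries along, in the same induction, an auxiliary statement about functions annihilated to high order by the $X_{\left[  I\right]  }$). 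To repair your proof you would have to import essentially this whole argument into your base case, at which point the induction on $\ell(I)$ becomes superfluous.
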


\begin{proof}
Throughout the proof we will assume the $X_{\left[  I\right]  }$ written in
canonical coordinates. In particular, the point $\overline{x}$ corresponds to
$u=0.$ We shall prove by induction on $q\geqslant0$ the following two facts:

\begin{itemize}
\item[i)] For every multiindex $I$ we have $X_{\left[  I\right]  }\in
V_{-\left\vert I\right\vert }^{q}$;

\item[ii)] For every positive integer $\alpha$, if for every $m\leqslant q+1$
and for every multiindices $I_{1},\ldots I_{m}\in B\ $such that $\left\vert
I_{1}\right\vert +\ldots\left\vert I_{m}\right\vert <\alpha$ we have
$X_{\left[  I_{m}\right]  }\cdots X_{\left[  I_{1}\right]  }f\left(  0\right)
=0$, then $f\in F_{\alpha}^{q+1}$.
\end{itemize}

First of all we observe that it is enough to show that $X_{\left[  I\right]
}\in V_{-\left\vert I\right\vert }^{q}$ when $I\in B$. Indeed let us fix a
certain $q\geqslant0$ and assume to know that $X_{\left[  I\right]  }\in
V_{-\left\vert I\right\vert }^{q}$ for every $I\in B$ and that ii) holds. Let
$J$ be a multiindex, $J\notin B$. Then%
\[
X_{\left[  J\right]  }=\sum_{I\in B}c_{JI}X_{\left[  I\right]  }.
\]
Since the vector fields $X_{0},X_{1},...,X_{n}$ are free up to step $r$, we
can assume that in the last sum $c_{JI}$ is nonzero only if $\left\vert
I\right\vert =\left\vert J\right\vert .$ For these constants $c_{JI}$ we then
have%
\[
X_{\left[  J\right]  }=\sum_{I\in B,\left\vert I\right\vert =\left\vert
J\right\vert }c_{JI}X_{\left[  I\right]  },
\]
which shows that it is enough to prove i) for $I\in B.$

Let now $q=0$. Observe that composing the operator $\operatorname*{ad}e_{I}$
with (\ref{Euler}) we get%
\begin{equation}
X_{\left[  I\right]  }+\sum_{K\in B}u_{K}\operatorname*{ad}e_{I}\left(
X_{\left[  K\right]  }\right)  =e_{I} \label{aaa}%
\end{equation}
This implies $\left(  X_{\left[  I\right]  }\right)  _{0}=e_{I}$ and therefore
that $X_{\left[  I\right]  }\in V_{-\left\vert I\right\vert }^{0}$. Assume now
that $f\left(  0\right)  =0$ and that for every multiindex $I\in B,$
$\left\vert I\right\vert <\alpha$ we have $X_{\left[  I\right]  }f\left(
0\right)  =0$. Since $X_{\left[  I\right]  }f\left(  0\right)  =\frac{\partial
f}{\partial u_{I}}\left(  0\right)  $ we have $f\in F_{\alpha}^{1}$.

Assume now that i) and ii) hold for a certain $q$ and let us prove that the
same is true with $q$ replaced by $q+1$. We start with i). We claim that it is
enough to show that
\[
W=\operatorname*{ad}X_{\left[  J\right]  }\left(  e_{I}\right)  \in
V_{-\left\vert I\right\vert -\left\vert J\right\vert }^{q}%
\]
Indeed, $u_{K}\operatorname*{ad}e_{I}\left(  X_{\left[  K\right]  }\right)
\in V_{-\left\vert I\right\vert }^{q+1}$ by Lemma \ref{Inclusions}, and since
$e_{I}\in V_{-\left\vert I\right\vert }^{q+1}$ by (\ref{aaa}) we have
$X_{\left[  I\right]  }\in V_{-\left\vert I\right\vert }^{q+1}$.

In order to show that $W\in V_{-\left\vert I\right\vert -\left\vert
J\right\vert }^{q}$ we compose $\operatorname*{ad}X_{\left[  J\right]  }$ with
(\ref{aaa}). This yields%
\begin{align}
\operatorname*{ad}X_{\left[  J\right]  }e_{I}  &  =W=\operatorname*{ad}%
X_{\left[  J\right]  }X_{\left[  I\right]  }+\sum_{K\in B}u_{K}%
\operatorname*{ad}X_{\left[  J\right]  }\operatorname*{ad}e_{I}\left(
X_{\left[  K\right]  }\right)  +\label{bbb}\\
&  +\sum_{K\in B}X_{\left[  J\right]  }\left(  u_{K}\right)
\operatorname*{ad}e_{I}\left(  X_{\left[  K\right]  }\right)  .\nonumber
\end{align}
From (\ref{aaa}) we also get $X_{\left[  J\right]  }\left(  u_{K}\right)
=\delta_{JK}-\sum_{L\in B}u_{L}\operatorname*{ad}e_{J}\left(  X_{\left[
L\right]  }\right)  \left(  u_{K}\right)  $. Since $\operatorname*{ad}%
e_{J}\left(  X_{\left[  L\right]  }\right)  \in V_{-\left\vert J\right\vert
-\left\vert L\right\vert }^{q-1}$ we have $u_{L}\operatorname*{ad}e_{J}\left(
X_{\left[  L\right]  }\right)  \in\mathring{V}_{-\left\vert J\right\vert }%
^{q}$ and therefore
\[
u_{L}\operatorname*{ad}e_{J}\left(  X_{\left[  L\right]  }\right)  \left(
u_{K}\right)  \in\mathring{F}_{-\left\vert J\right\vert +\left\vert
K\right\vert }^{q}.
\]
This implies that the second summation in (\ref{bbb}) is congruent to
$\operatorname*{ad}e_{I}\left(  X_{\left[  J\right]  }\right)  =-W$ modulo
$\mathring{V}_{-\left\vert J\right\vert -\left\vert I\right\vert }^{q}$. By
Jacobi identity (see the proof of Proposition \ref{Proposition 2})
\[
\operatorname*{ad}X_{\left[  J\right]  }X_{\left[  I\right]  }=\sum
_{\left\vert L\right\vert =\left\vert J\right\vert +\left\vert I\right\vert
}c_{L}X_{\left[  L\right]  }%
\]
for suitable coefficients $c_{L}$. This implies $\operatorname*{ad}X_{\left[
J\right]  }X_{\left[  I\right]  }\in V_{-\left\vert J\right\vert -\left\vert
I\right\vert }^{q}$. Hence%
\begin{equation}
W\equiv-W+\sum_{K\in B}u_{K}\operatorname*{ad}X_{\left[  J\right]
}\operatorname*{ad}e_{I}\left(  X_{\left[  K\right]  }\right)
\;\;\operatorname{mod}V_{-\left\vert J\right\vert -\left\vert I\right\vert
}^{q}. \label{ccc}%
\end{equation}
Since%
\begin{align*}
&  u_{K}\operatorname*{ad}X_{\left[  J\right]  }\operatorname*{ad}e_{I}\left(
X_{\left[  K\right]  }\right)  =-u_{K}\operatorname*{ad}X_{\left[  J\right]
}\operatorname*{ad}X_{\left[  K\right]  }\left(  e_{I}\right)  =\\
&  =-u_{K}\operatorname*{ad}X_{\left[  K\right]  }\operatorname*{ad}X_{\left[
J\right]  }\left(  e_{I}\right)  -u_{K}\operatorname*{ad}\left[  X_{\left[
K\right]  },X_{\left[  J\right]  }\right]  \left(  e_{I}\right)
\end{align*}
we have%
\[
u_{K}\operatorname*{ad}X_{\left[  J\right]  }\operatorname*{ad}e_{I}\left(
X_{\left[  K\right]  }\right)  \equiv-u_{K}\operatorname*{ad}X_{\left[
K\right]  }\operatorname*{ad}X_{\left[  J\right]  }\left(  e_{I}\right)
\;\;\operatorname{mod}V_{-\left\vert I\right\vert -\left\vert J\right\vert
}^{q}.
\]
By Jacoby identity and substituting in (\ref{ccc}) we obtain
\[
2W=-\sum_{K}u_{K}\operatorname*{ad}X_{\left[  K\right]  }\left(  W\right)
\;\;\operatorname{mod}V_{-\left\vert I\right\vert -\left\vert J\right\vert
}^{q}.
\]
We now use (\ref{Euler}) to replace $X_{\left[  K\right]  }$ by $e_{K}$.
Indeed we have%
\begin{align*}
\sum_{K}u_{K}\operatorname*{ad}X_{\left[  K\right]  }\left(  W\right)   &
=\sum_{K}\operatorname*{ad}\left(  u_{K}X_{\left[  K\right]  }\right)  \left(
W\right)  +\sum_{K}W\left(  u_{k}\right)  X_{\left[  K\right]  }\\
&  =\operatorname*{ad}\left(  \sum_{K}u_{K}X_{\left[  K\right]  }\right)
\left(  W\right)  +\sum_{K}W\left(  u_{k}\right)  X_{\left[  K\right]  }\\
&  =\operatorname*{ad}\left(  \sum_{K}u_{K}e_{K}\right)  \left(  W\right)
+\sum_{K}W\left(  u_{k}\right)  X_{\left[  K\right]  }\\
&  =\sum_{K}u_{K}\operatorname*{ad}\left(  e_{K}\right)  \left(  W\right)
+\sum_{K}W\left(  u_{K}\right)  \left(  X_{\left[  K\right]  }-e_{K}\right)  .
\end{align*}
Since $W=\operatorname*{ad}X_{\left[  J\right]  }e_{I}\in V_{-\left\vert
I\right\vert -\left\vert J\right\vert }^{q-1}$ we have $W\left(  u_{K}\right)
\in F_{\left\vert K\right\vert -\left\vert I\right\vert -\left\vert
J\right\vert }^{q-1}.$ Also, since $X_{\left[  K\right]  }-e_{K}\in
\mathring{V}_{-\left\vert K\right\vert }^{q}$ we have $W\left(  u_{K}\right)
\left(  X_{\left[  K\right]  }-e_{K}\right)  \in V_{-\left\vert I\right\vert
-\left\vert J\right\vert }^{q}$. Hence%
\[
TW\in V_{-\left\vert I\right\vert -\left\vert J\right\vert }^{q}%
\]
where we set $TW=2W+\sum_{K}u_{K}\operatorname*{ad}\left(  e_{K}\right)
\left(  W\right)  $. We claim that this implies that $W\in V_{-\left\vert
I\right\vert -\left\vert J\right\vert }^{q}$. To see this, write $W=\sum
_{L}f_{L}e_{L}$. Then
\begin{align*}
TW  &  =2\sum_{L}f_{L}e_{L}+\sum_{L,K}u_{K}\left[  e_{K},f_{L}e_{L}\right]
=\\
&  =2\sum_{L}f_{L}e_{L}+\sum_{L,K}u_{K}\frac{\partial f_{L}}{\partial u_{K}%
}e_{L}=\sum_{L}\left(  2f_{L}+\sum_{K}u_{K}\frac{\partial f}{\partial u_{K}%
}\right)  e_{L}.
\end{align*}
Let $g$ be a homogeneous function of degree $\mu$, then $\sum_{K}u_{K}%
\frac{\partial g}{\partial u_{K}}=\mu g$ which shows that the operator
$f\mapsto2f+\sum_{K}u_{K}\frac{\partial f}{\partial u_{K}}$ acts on the Taylor
expansion of a function multiplying a term of degree $\mu$ by $\left(
2+\mu\right)  $. This implies that $W\in V_{-\left\vert I\right\vert
-\left\vert J\right\vert }^{q}$.

Now we will show that also ii) holds with $q$ replaced by $q+1$. We have to
show that if for every $m\leqslant q+2$ and for every multiindices
$I_{1},\ldots I_{m}\in B$ such that $\left\vert I_{1}\right\vert
+\ldots\left\vert I_{m}\right\vert <\alpha$ we have $X_{\left[  I_{m}\right]
}\cdots X_{\left[  I_{1}\right]  }f\left(  0\right)  =0,$ then $f\in
F_{\alpha}^{q+2}$. By the definition of the class $F_{\alpha}^{q+2}$ this
amounts to showing that $e_{I_{m}}\cdots e_{I_{1}}f\left(  0\right)  =0$ for
every $m\leqslant q+2$ and every $I_{1},\ldots I_{m}\in B\ $such that
$\left\vert I_{1}\right\vert +\ldots\left\vert I_{m}\right\vert <\alpha$.

By the induction hypothesis $f\in F_{\alpha}^{q+1}$, we already know that for
every $I\in B_{\overline{x}}$ we have $X_{\left[  I\right]  }\in
V_{-\left\vert I\right\vert }^{q+1}$. By (\ref{aaa}) we have
\[
e_{I_{1}}f=X_{\left[  I_{1}\right]  }f+g_{1}%
\]
with $g_{1}=\sum_{K\in B}u_{K}\operatorname*{ad}e_{I_{1}}\left(  X_{\left[
K\right]  }\right)  f$. By Lemma \ref{Inclusions} we have $g_{1}\in
F_{\alpha-\left\vert I_{1}\right\vert }^{q+1}$. Iterating this argument yields%
\[
e_{I_{m}\cdots}e_{I_{1}}f=X_{\left[  I_{m}\right]  }\cdots X_{\left[
I_{1}\right]  }f+g_{m}%
\]
with $g_{m}\in F_{\alpha-\left(  \left\vert I_{1}\right\vert +\cdots
+\left\vert I_{m}\right\vert \right)  }^{q+2-m}$. Since $\alpha>\left\vert
I_{1}\right\vert +\cdots+\left\vert I_{m}\right\vert $ this implies
$g_{m}\left(  0\right)  =0$ and therefore that $e_{I_{m}\cdots}e_{I_{1}%
}f\left(  0\right)  =0$. \hfill
\end{proof}

\subsection{Pointwise approximation}

As in the previous subsection, we assume that the $X_{i}$'s are free and a
basis $\left\{  \left(  X_{\left[  I\right]  }\right)  _{\overline{x}%
}\right\}  _{I\in B}$ for $\mathbb{R}^{p}$ is chosen once and for all; this
choice induces a system of canonical coordinates $u\ $near $\overline{x}$ such
that%
\begin{equation}
\sum_{I\in B}u_{I}e_{I}=\sum_{I\in B}u_{I}X_{\left[  I\right]  }
\label{Euler2}%
\end{equation}
(where $e_{I}=\frac{\partial}{\partial u_{I}}$) and that for every multiindex
$I$ the vector field $X_{\left[  I\right]  }$ has weight $\geq-\left\vert
I\right\vert $.

We can now prove Rothschild-Stein's approximation theorem for free weighted
vector fields:

\begin{theorem}
[Approximation, pointwise version]\label{Thm Approximation}If $Y_{0}%
,Y_{1},\ldots,Y_{n}$ is another system of vector fields satisfying (with
respect to the same canonical coordinates)%
\begin{equation}
\sum_{I\in B}u_{I}e_{I}=\sum_{I\in B}u_{I}Y_{\left[  I\right]  }\ ,
\label{EulerY2}%
\end{equation}
then $X_{\left[  I\right]  }-Y_{\left[  I\right]  }$ has weight $\geq
1-\left\vert I\right\vert $.
\end{theorem}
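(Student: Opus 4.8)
The plan is to mimic the induction from the proof of Theorem~\ref{Thm weight of X_[I]}, but now tracking the \emph{difference} $D_{\left[I\right]}=X_{\left[I\right]}-Y_{\left[I\right]}$ and showing it gains one unit of weight. First I would observe that by subtracting (\ref{Euler2}) and (\ref{EulerY2}) we get the crucial identity
\begin{equation*}
\sum_{I\in B}u_{I}D_{\left[I\right]}=0 ,
\end{equation*}
so that $D_{\left[I\right]}$ is a vector field which, together with its ``Euler-type'' relation, satisfies a homogeneity constraint one order stronger than that satisfied by $X_{\left[I\right]}$ or $Y_{\left[I\right]}$ individually. Composing $\operatorname*{ad}e_{I}$ with this identity gives, in analogy with (\ref{aaa}),
\begin{equation*}
D_{\left[I\right]}+\sum_{K\in B}u_{K}\operatorname*{ad}e_{I}\left(D_{\left[K\right]}\right)=0,
\end{equation*}
which in particular yields $\left(D_{\left[I\right]}\right)_{0}=0$, i.e. $D_{\left[I\right]}\in\mathring V_{1-\left\vert I\right\vert}^{0}$, the base case $q=0$. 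As before, it suffices to treat $I\in B$, since for $J\notin B$ one writes $X_{\left[J\right]}=\sum_{I\in B,\,\left\vert I\right\vert=\left\vert J\right\vert}c_{JI}X_{\left[I\right]}$ with the \emph{same universal} constants $c_{JI}$ for the $Y$'s (freeness forces the structure constants to be the universal $A$-matrix coefficients), so $D_{\left[J\right]}=\sum_{I}c_{JI}D_{\left[I\right]}$ and the weight estimate passes to all multiindices.

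Next I would run the inductive step on $q$, closely paralleling the argument for Theorem~\ref{Thm weight of X_[I]}. Set $W'=\operatorname*{ad}X_{\left[J\right]}\left(e_{I}\right)-\operatorname*{ad}Y_{\left[J\right]}\left(e_{I}\right)$; note $W'=\operatorname*{ad}D_{\left[J\right]}\left(e_{I}\right)$ plus a term $\operatorname*{ad}Y_{\left[J\right]}$ acting on the (already known) object $X_{\left[I\right]}-Y_{\left[I\right]}$ — so I would rather organize the computation directly around $\operatorname*{ad}X_{\left[J\right]}e_{I}$ and its $Y$-analogue, subtract the two copies of (\ref{bbb})–(\ref{ccc}), and use at each occurrence either Theorem~\ref{Thm weight of X_[I]} (to know each individual $X_{\left[K\right]},Y_{\left[K\right]}$ lies in $V_{-\left\vert K\right\vert}^{q+1}$) or the inductive hypothesis on $D$ (to know the differences lie in $\mathring V_{1-\left\vert K\right\vert}^{q}$, one weight better). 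The Jacobi-identity reductions $\operatorname*{ad}X_{\left[J\right]}X_{\left[I\right]}=\sum_{\left\vert L\right\vert=\left\vert I\right\vert+\left\vert J\right\vert}c_{L}X_{\left[L\right]}$ use the \emph{same} universal coefficients $c_{L}$ for $X$ and $Y$, so their difference is $\sum_{L}c_{L}D_{\left[L\right]}$, which by inductive hypothesis is in $V_{1-\left\vert I\right\vert-\left\vert J\right\vert}^{q}$ — exactly the gain we need. Running this through, all error terms land in $V_{1-\left\vert I\right\vert-\left\vert J\right\vert}^{q}$, so one arrives at a congruence of the form
\begin{equation*}
2W'+\sum_{K}u_{K}\operatorname*{ad}\left(e_{K}\right)\left(W'\right)\in V_{1-\left\vert I\right\vert-\left\vert J\right\vert}^{q}.
\end{equation*}
Then the same algebraic observation — the operator $f\mapsto 2f+\sum_{K}u_{K}\partial f/\partial u_{K}$ multiplies a homogeneous term of degree $\mu$ by $2+\mu$, hence is invertible on the relevant space of Taylor polynomials — upgrades this to $W'\in V_{1-\left\vert I\right\vert-\left\vert J\right\vert}^{q}$, which via (\ref{aaa}) (and its $Y$-version) forces $D_{\left[I\right]}=X_{\left[I\right]}-Y_{\left[I\right]}\in V_{1-\left\vert I\right\vert}^{q+1}$.

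The main obstacle I anticipate is bookkeeping: in the proof of Theorem~\ref{Thm weight of X_[I]} the weight losses (dropping from superscript $q+1$ to $q$, from $V$ to $\mathring V$, etc.) are arranged so that everything barely closes; here every term must be shown to lie in a class with subscript shifted by $+1$, so I must be careful, at each step where a factor $u_{K}$ or an $X_{\left[K\right]}-e_{K}\in\mathring V_{-\left\vert K\right\vert}^{q}$ appears, to decide whether that factor should be paired with a ``difference'' object (giving the $+1$) or whether the $+1$ comes from the outermost $D$. Concretely, terms like $\sum_{K}u_{K}\operatorname*{ad}X_{\left[J\right]}\operatorname*{ad}e_{I}\left(X_{\left[K\right]}\right)$ must be split as (a $Y$-analogue) $+$ (a remainder built from $D$'s), and I must verify the remainder genuinely carries weight $\geq 1-\left\vert I\right\vert-\left\vert J\right\vert$ and not merely $\geq -\left\vert I\right\vert-\left\vert J\right\vert$; the subtlety is that $D_{\left[K\right]}$ need not vanish at higher order, only to first order, so I can only use $D_{\left[K\right]}\in V_{1-\left\vert K\right\vert}^{q}$ (not $\mathring V$) unless an extra $u$ is present. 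I expect that, just as in Theorem~\ref{Thm weight of X_[I]}, the inclusions of Lemma~\ref{Inclusions} are exactly strong enough to make this work, but writing it out demands care to never ``lose'' the single unit of extra weight.
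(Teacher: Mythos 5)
Your proposal follows essentially the same route as the paper's proof: reduction to $I\in B$ via the universal constants, the base case $q=0$ from subtracting (\ref{aaa2})--(\ref{aaaY2}), the difference of the Jacobi reductions with the same coefficients $c_L$, and the final invertibility of $f\mapsto 2f+\sum_{K}u_{K}\partial f/\partial u_{K}$ applied to $Z=\operatorname{ad}\left(X_{\left[J\right]}-Y_{\left[J\right]}\right)\left(e_{I}\right)$. The one step you leave schematic --- passing from the subtracted (\ref{bbb})--(\ref{ccc}) computation to the congruence $TZ\equiv 0$ --- is exactly where the paper exploits the identity $\sum_{K\in B}u_{K}\left(X_{\left[K\right]}-Y_{\left[K\right]}\right)=0$ that you single out at the start, in the form of the symmetry $\operatorname{ad}\left(X_{\left[J\right]}-Y_{\left[J\right]}\right)\left(e_{I}\right)\equiv\operatorname{ad}\left(X_{\left[I\right]}-Y_{\left[I\right]}\right)\left(e_{J}\right)$ modulo $V_{1-\left\vert I\right\vert-\left\vert J\right\vert}^{q}$, so the bookkeeping you anticipate does close.
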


(In particular, for $I=\left(  i\right)  $ we have that $X_{0}-Y_{0}$ has
weight $\geq-1$ while $X_{i}-Y_{i}$ has weight $\geq0$ for $i=1,2,...,n$).

\begin{proof}
The proof exploits the same techniques as in the proof of Theorem
\ref{Thm weight of X_[I]}. Let us recall that we are now working in canonical
coordinates. We shall prove by induction on $q$ that
\begin{equation}
X_{\left[  I\right]  }-Y_{\left[  I\right]  }\in V_{1-\left\vert I\right\vert
}^{q}. \label{Approx}%
\end{equation}
Observe that this is obvious when $\left\vert I\right\vert >s$. Indeed, every
vector field $Z=\sum_{J\in B}f_{J}e_{J}$ has a weight $\geqslant-s$ since if
$J\in B$ then $\left\vert J\right\vert \leqslant s$. Next we prove that it is
enough to show (\ref{Approx}) when $I\in B$.

Indeed, let $I$ be any multiindex of weight $\leqslant s$. Then since
$\left\{  \left(  X_{\left[  J\right]  }\right)  _{\overline{x}}\right\}
_{J\in B}$ spans $\mathbb{R}^{p}$ there exist coefficients $c_{IJ}$ such that%
\[
\left(  X_{\left[  I\right]  }\right)  _{\overline{x}}=\sum_{J\in B}%
c_{IJ}\left(  X_{\left[  J\right]  }\right)  _{\overline{x}}.
\]
Let $a_{J}=\delta_{IJ}-c_{IJ}$ where we assume that $c_{IJ}=0$ if $J\notin B$.
Then%
\[
\sum a_{J}\left(  X_{\left[  J\right]  }\right)  _{\overline{x}}=0.
\]
Since the vector fields are free of weight $s$ this implies that for every $K$%
\[
\sum_{J}a_{J}A_{JK}=0.
\]
Therefore, for any family of vector fields $Z_{i}$ we have%
\[
0=\sum_{K,~\left\vert K\right\vert =\left\vert I\right\vert }\sum_{J}%
a_{J}A_{JK}Z_{K}=\sum_{J,~\left\vert J\right\vert =\left\vert I\right\vert
}\sum_{K}a_{J}A_{JK}Z_{K}=\sum_{\left\vert J\right\vert =\left\vert
I\right\vert }a_{J}Z_{\left[  J\right]  }%
\]
and finally%
\[
Z_{\left[  I\right]  }=\sum_{J\in B,\left\vert J\right\vert =\left\vert
I\right\vert }c_{IJ}Z_{\left[  J\right]  }.
\]
In particular we have%
\[
X_{\left[  I\right]  }=\sum_{J\in B,\left\vert J\right\vert =\left\vert
I\right\vert }c_{IJ}X_{\left[  J\right]  }%
\]
and%
\[
Y_{\left[  I\right]  }=\sum_{J\in B,\left\vert J\right\vert =\left\vert
I\right\vert }c_{IJ}Y_{\left[  J\right]  }.
\]
These identities show that if (\ref{Approx}) hold for $I\in B,$ then they hold
for every $I.$

Now we prove that (\ref{Approx}) hold for $I\in B$. Let $q=0$. We have seen in
the proof of Theorem \ref{Thm weight of X_[I]} that (\ref{EulerY2}) implies%
\begin{equation}
X_{\left[  I\right]  }+\sum_{K\in B}u_{K}\operatorname*{ad}e_{I}\left(
X_{\left[  K\right]  }\right)  =e_{I} \label{aaa2}%
\end{equation}
and%
\begin{equation}
Y_{\left[  I\right]  }+\sum_{K\in B}u_{K}\operatorname*{ad}e_{I}\left(
Y_{\left[  K\right]  }\right)  =e_{I}. \label{aaaY2}%
\end{equation}
In particular, $\left(  X_{\left[  I\right]  }\right)  _{\overline{x}}=e_{I}$
and $\left(  Y_{\left[  I\right]  }\right)  _{\overline{x}}=e_{I}$, hence
(\ref{Approx}) holds for $q=0$.

We now assume that (\ref{Approx}) holds for a certain $q$ and we prove that
the same is true with $q$ replaced by $q+1$. We claim that it is enough to
show that
\[
Z=\operatorname*{ad}\left(  X_{\left[  J\right]  }-Y_{\left[  J\right]
}\right)  \left(  e_{I}\right)  \in V_{1-\left\vert I\right\vert -\left\vert
J\right\vert }^{q}.
\]
Indeed, by (\ref{aaa2}), (\ref{aaaY2})%
\[
X_{\left[  I\right]  }-Y_{\left[  I\right]  }=\sum_{K\in B}u_{K}%
\operatorname*{ad}\left(  X_{\left[  K\right]  }-Y_{\left[  K\right]
}\right)  \left(  e_{I}\right)
\]
and by Lemma \ref{Inclusions}, $u_{K}\operatorname*{ad}\left(  X_{\left[
K\right]  }-Y_{\left[  K\right]  }\right)  \left(  e_{I}\right)  \in
V_{1-\left\vert I\right\vert }^{q+1}$.

Now we will show that $Z=\operatorname*{ad}\left(  X_{\left[  J\right]
}-Y_{\left[  J\right]  }\right)  \left(  e_{I}\right)  \in V_{1-\left\vert
I\right\vert -\left\vert J\right\vert }^{q}$. Composing (\ref{aaa2}) with
$\operatorname*{ad}X_{\left[  J\right]  }$, (\ref{aaaY2}) with
$\operatorname*{ad}Y_{\left[  J\right]  }$ and computing the difference gives%
\begin{align*}
Z  &  =\operatorname*{ad}X_{\left[  J\right]  }\left(  X_{\left[  I\right]
}\right)  -\operatorname*{ad}Y_{\left[  J\right]  }\left(  Y_{\left[
I\right]  }\right)  +\\
&  +\sum_{K\in B}\operatorname*{ad}X_{\left[  J\right]  }\left(
u_{K}\operatorname*{ad}e_{I}\left(  X_{\left[  K\right]  }\right)  \right)
-\sum_{K\in B}\operatorname*{ad}Y_{\left[  J\right]  }\left(  u_{K}%
\operatorname*{ad}e_{I}\left(  Y_{\left[  K\right]  }\right)  \right) \\
&  =\operatorname*{ad}X_{\left[  J\right]  }\left(  X_{\left[  I\right]
}\right)  -\operatorname*{ad}Y_{\left[  J\right]  }\left(  Y_{\left[
I\right]  }\right)  +\sum_{K\in B}\operatorname*{ad}\left(  X_{\left[
J\right]  }-Y_{\left[  J\right]  }\right)  \left(  u_{K}\operatorname*{ad}%
e_{I}\left(  X_{\left[  K\right]  }\right)  \right)  +\\
&  +\sum_{K\in B}\operatorname*{ad}Y_{\left[  J\right]  }\left(
u_{K}\operatorname*{ad}e_{I}\left(  X_{\left[  K\right]  }-Y_{\left[
K\right]  }\right)  \right)
\end{align*}

By the Jacobi identity
\[
\operatorname*{ad}X_{\left[  J\right]  }\left(  X_{\left[  I\right]  }\right)
-\operatorname*{ad}Y_{\left[  J\right]  }\left(  Y_{\left[  I\right]
}\right)  =\sum_{\left\vert K\right\vert =\left\vert J\right\vert +\left\vert
I\right\vert }c_{K}\left(  X_{\left[  K\right]  }-Y_{\left[  K\right]
}\right)
\]
and by inductive hypothesis $\operatorname*{ad}X_{\left[  J\right]  }\left(
X_{\left[  I\right]  }\right)  -\operatorname*{ad}Y_{\left[  J\right]
}\left(  Y_{\left[  I\right]  }\right)  \in V_{1-\left\vert I\right\vert
-\left\vert J\right\vert }^{q}$.

Also since $X_{\left[  K\right]  }\in V_{-\left\vert K\right\vert }^{q+1}$ we
have $\operatorname*{ad}e_{I}\left(  X_{\left[  K\right]  }\right)  \in
V_{-\left\vert I\right\vert -\left\vert K\right\vert }^{q}$ and therefore
$u_{K}\operatorname*{ad}e_{I}\left(  X_{\left[  K\right]  }\right)
\in\mathring{V}_{-\left\vert I\right\vert }^{q+1}$. Since $X_{\left[
J\right]  }-Y_{\left[  J\right]  }\in V_{1-\left\vert J\right\vert }^{q}$ we
have
\[
\operatorname*{ad}\left(  X_{\left[  J\right]  }-Y_{\left[  J\right]
}\right)  \left(  u_{K}\operatorname*{ad}e_{I}\left(  X_{\left[  K\right]
}\right)  \right)  \in V_{1-\left\vert I\right\vert -\left\vert J\right\vert
}^{q}.
\]
This means that modulo $V_{1-\left\vert I\right\vert -\left\vert J\right\vert
}^{q}$ we have
\begin{align*}
Z  &  \equiv\sum_{K\in B}\operatorname*{ad}Y_{\left[  J\right]  }\left(
u_{K}\operatorname*{ad}e_{I}\left(  X_{\left[  K\right]  }-Y_{\left[
K\right]  }\right)  \right) \\
&  \equiv\sum_{K\in B}\operatorname*{ad}\left(  Y_{\left[  J\right]  }%
-e_{J}\right)  \left(  u_{K}\operatorname*{ad}e_{I}\left(  X_{\left[
K\right]  }-Y_{\left[  K\right]  }\right)  \right)  +\\
&  +\sum_{K\in B}\operatorname*{ad}e_{J}\left(  u_{K}\operatorname*{ad}%
e_{I}\left(  X_{\left[  K\right]  }-Y_{\left[  K\right]  }\right)  \right) \\
&  \equiv\sum_{K\in B}\operatorname*{ad}e_{J}\left(  u_{K}\operatorname*{ad}%
e_{I}\left(  X_{\left[  K\right]  }-Y_{\left[  K\right]  }\right)  \right)
\end{align*}
since $Y_{\left[  J\right]  }-e_{J}\in\mathring{V}_{-\left\vert J\right\vert
}^{q+1}$ and $u_{K}\operatorname*{ad}e_{I}\left(  X_{\left[  K\right]
}-Y_{\left[  K\right]  }\right)  \in V_{1-\left\vert I\right\vert }^{q}$.
Finally%
\begin{align}
Z  &  \equiv\sum_{K\in B}\operatorname*{ad}e_{J}\left(  u_{K}%
\operatorname*{ad}e_{I}\left(  X_{\left[  K\right]  }-Y_{\left[  K\right]
}\right)  \right) \label{ddd}\\
&  \equiv\sum_{K\in B}\left[  \operatorname*{ad}e_{J}\operatorname*{ad}%
e_{I}\left[  u_{K}\left(  X_{\left[  K\right]  }-Y_{\left[  K\right]
}\right)  \right]  -\operatorname*{ad}e_{J}\delta_{IK}\left(  X_{\left[
K\right]  }-Y_{\left[  K\right]  }\right)  \right] \nonumber\\
&  \equiv\operatorname*{ad}e_{J}\operatorname*{ad}e_{I}\left[  \sum_{K\in
B}u_{K}\left(  X_{\left[  K\right]  }-Y_{\left[  K\right]  }\right)  \right]
-\operatorname*{ad}e_{J}\left(  X_{\left[  I\right]  }-Y_{\left[  I\right]
}\right) \nonumber\\
&  \equiv-\operatorname*{ad}e_{J}\left(  X_{\left[  I\right]  }-Y_{\left[
I\right]  }\right)  \equiv\operatorname*{ad}\left(  X_{\left[  I\right]
}-Y_{\left[  I\right]  }\right)  \left(  e_{J}\right)  .\nonumber
\end{align}
since $\sum_{K\in B}u_{K}\left(  X_{\left[  K\right]  }-Y_{\left[  K\right]
}\right)  =0$ by (\ref{Euler2}) and \ref{EulerY2}). This shows that for every
multiindex $I$ and $J$ we have $\operatorname*{ad}\left(  X_{\left[  J\right]
}-Y_{\left[  J\right]  }\right)  \left(  e_{I}\right)  \equiv
\operatorname*{ad}\left(  X_{\left[  I\right]  }-Y_{\left[  I\right]
}\right)  \left(  e_{J}\right)  $. Using this fact in (\ref{ddd}) yields
\begin{align*}
Z  &  \equiv\sum_{K\in B}\operatorname*{ad}e_{J}\left(  u_{K}%
\operatorname*{ad}e_{I}\left(  X_{\left[  K\right]  }-Y_{\left[  K\right]
}\right)  \right) \\
&  \equiv\sum_{K\in B}\operatorname*{ad}e_{J}\left(  u_{K}\operatorname*{ad}%
e_{K}\left(  X_{\left[  I\right]  }-Y_{\left[  I\right]  }\right)  \right) \\
&  \equiv\operatorname*{ad}e_{J}\left(  X_{\left[  I\right]  }-Y_{\left[
I\right]  }\right)  +\sum_{K\in B}u_{K}\operatorname*{ad}e_{J}%
\operatorname*{ad}e_{K}\left(  X_{\left[  I\right]  }-Y_{\left[  I\right]
}\right) \\
&  \equiv\operatorname*{ad}e_{J}\left(  X_{\left[  I\right]  }-Y_{\left[
I\right]  }\right)  +\sum_{K\in B}u_{K}\operatorname*{ad}e_{K}%
\operatorname*{ad}e_{J}\left(  X_{\left[  I\right]  }-Y_{\left[  I\right]
}\right)
\end{align*}
since $e_{K}$ and $e_{J}$ commutes. Hence%
\[
Z\equiv-Z-\sum_{K\in B}u_{K}\operatorname*{ad}e_{K}Z.
\]
This means
\[
TZ\equiv0\;\;\operatorname{mod}V_{1-\left\vert I\right\vert -\left\vert
J\right\vert }^{q}.
\]
which implies $Z\in V_{1-\left\vert I\right\vert -\left\vert J\right\vert
}^{q}$. \medskip\hfill
\end{proof}

In order to recover from Theorem \ref{Thm Approximation} the exact statement
of Rothschild-Stein's \textquotedblleft approximation
theorem\textquotedblright, some work has still to be done. First, we have to
pass from the \textit{pointwise} statement of Theorem \ref{Thm Approximation}
to an analogous \textit{local} statement. This involves the introduction of
Rothschild-Stein's \textquotedblleft map $\Theta$\textquotedblright\ and the
study of some of its properties. Second, we have to apply this theorem to the
case where the vector fields $Y_{i}$ are homogeneous left invariant with
respect to a structure of homogeneous group, and deduce some information on
the \textquotedblleft remainders\textquotedblright\ in this approximation
procedure. These tasks will be performed in the next two subsections, respectively.

\subsection{From \textit{pointwise }to \textit{local}. The map $\Theta$}

We now revise the construction of local coordinates $u_{I}$ made in
\S \ref{subsection canonical coordinates}. Let $\Omega$ be as at the beginning
of \S \ref{section approximation}; we claim that for any $\Omega^{\prime
}\Subset\Omega$ there exists a neighborhood $U\left(  0\right)  \subset$
$\mathbb{R}^{p}$ where the map%
\begin{equation}
E\left(  \cdot,\xi_{0}\right)  :u\equiv\left(  u_{I}\right)  _{I\in
B}\longmapsto\xi\equiv\exp\left(
{\displaystyle\sum\limits_{I\in B}}
u_{I}{X}_{\left[  I\right]  }\right)  \left(  \xi_{0}\right)  \label{mappa E}%
\end{equation}
is well defined and smooth, for any fixed $\xi_{0}\in\Omega^{\prime}$. Namely,
by classical results about O.D.E.'s, $E$ is smooth in the joint variables
$\left(  u,\xi_{0}\right)  \in U\left(  0\right)  \times\Omega^{\prime}$.

Next, we define%
\[
F\left(  u,\xi_{0},\xi\right)  =E\left(  u,\xi_{0}\right)  -\xi
\]
on $U\left(  0\right)  \times\Omega^{\prime}\times\mathbb{R}^{p}.$ Noting that
$F\left(  0,\xi_{0},\xi_{0}\right)  =0$ and that the Jacobian of $F$ with
respect to the $u$ variables, at $\left(  0,\xi_{0},\xi_{0}\right)  ,$ has
determinant%
\[
\det\left(  \left(  {X}_{\left[  I\right]  }\right)  _{\xi_{0}}\right)  ,
\]
which does not vanish since $\left\{  {X}_{\left[  I\right]  }\right\}  _{I\in
B}$ span $\mathbb{R}^{p}$, by the implicit function theorem we can define a
function%
\[
u=\Theta\left(  \eta,\xi\right)  ,
\]
smooth in some neighborhood $W$ of $\left(  \xi_{0},\xi_{0}\right)  $, such
that $E\left(  \Theta\left(  \eta,\xi\right)  ,\eta\right)  =\xi.$

Summarizing the above discussion we can state the following:

\begin{proposition}
[The map $\Theta$]\label{Map Theta}

\begin{enumerate}
\item[i)] For any $\xi_{0}\in\Omega$ there exist a neighborhood $W$ of
$\left(  \xi_{0},\xi_{0}\right)  $ in $\mathbb{R}^{2p},$ a neighborhood
$U\left(  0\right)  $ of $0$ in $\mathbb{R}^{p}$ and a smooth map
$\Theta\left(  \cdot,\cdot\right)  :W\rightarrow U\left(  0\right)  $ such
that:%
\begin{equation}
\xi=\exp\left(
{\displaystyle\sum\limits_{I\in B}}
u_{I}{X}_{\left[  I\right]  }\right)  \left(  \eta\right)  \text{ for
}u=\Theta\left(  \eta,\xi\right)  ; \label{def Theta}%
\end{equation}

\item[ii)] The map $\Theta$ satisfies
\begin{equation}
\Theta\left(  \eta,\xi\right)  =-\Theta\left(  \xi,\eta\right)  ;
\label{meno Theta}%
\end{equation}

\item[iii)] for any fixed $\eta,$ the map $u=\Theta\left(  \eta,\xi\right)  $
is a diffeomorphism from a neighborhood of $\eta$ onto a neighborhood of $0$,
in $\mathbb{R}^{p};$

\item[iv)] analogously, for any fixed $\xi,$ the map $u=\Theta\left(  \eta
,\xi\right)  $ is a diffeomorphism from a neighborhood of $\xi$ onto a
neighborhood of $0.$
\end{enumerate}
\end{proposition}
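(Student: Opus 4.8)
The plan is to establish the four assertions in turn; (i) is essentially the content of the discussion preceding the statement, while (ii)--(iv) then follow by short arguments involving flows and the inverse function theorem.

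For (i) I would invoke the implicit function theorem exactly as sketched there: the map $E$ of (\ref{mappa E}) is smooth in the joint variables $\left( u,\xi_{0}\right) $ by the classical result on smooth dependence of solutions of ODE's on initial data, the function $F\left( u,\eta,\xi\right) =E\left( u,\eta\right) -\xi$ vanishes at $\left( 0,\xi_{0},\xi_{0}\right) $, and its Jacobian matrix in the $u$ variables at that point has columns $\left( X_{\left[ I\right] }\right) _{\xi_{0}}$, $I\in B$, hence nonvanishing determinant since $\left\{ X_{\left[ I\right] }\right\} _{I\in B}$ is a basis of $\mathbb{R}^{p}$ at every point of $\Omega$. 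This produces the neighborhoods $W,U\left( 0\right) $ and the smooth $\Theta$ satisfying (\ref{def Theta}); crucially, the implicit function theorem also yields \emph{local uniqueness}: for $\left( \eta,\xi\right) \in W$, $u=\Theta\left( \eta,\xi\right) $ is the \emph{only} element of $U\left( 0\right) $ with $E\left( u,\eta\right) =\xi$. Before going on I would shrink $U\left( 0\right) $ to a ball centered at $0$ (so that $u\in U\left( 0\right) \Longrightarrow -u\in U\left( 0\right) $) and replace $W$ by $W\cap\left\{ \left( \xi,\eta\right) :\left( \eta,\xi\right) \in W\right\} $, still a neighborhood of $\left( \xi_{0},\xi_{0}\right) $ because that point is fixed by the coordinate swap, shrinking once more if needed so that $\Theta\left( W\right) \subset U\left( 0\right) $.

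For (ii), put $u=\Theta\left( \eta,\xi\right) $ and $V=\sum_{I\in B}u_{I}X_{\left[ I\right] }$; then the integral curve $\gamma$ of $V$ with $\gamma\left( 0\right) =\eta$ satisfies $\gamma\left( 1\right) =\xi$. The reversed curve $s\mapsto\gamma\left( 1-s\right) $ is the integral curve of $-V$ issuing from $\xi$ and reaching $\eta$ at time $1$, that is, $E\left( -u,\xi\right) =\eta$. Since $-u\in U\left( 0\right) $ and $\left( \xi,\eta\right) \in W$, the local uniqueness just recorded forces $\Theta\left( \xi,\eta\right) =-u=-\Theta\left( \eta,\xi\right) $, which is (\ref{meno Theta}). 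For (iii), fix $\eta$ near $\xi_{0}$ and consider $g_{\eta}:u\mapsto E\left( u,\eta\right) $: it is smooth on $U\left( 0\right) $, with $g_{\eta}\left( 0\right) =\eta$ and $Dg_{\eta}\left( 0\right) $ of determinant $\det\left( \left( X_{\left[ I\right] }\right) _{\eta}\right) \neq0$, so by the inverse function theorem $g_{\eta}$ is a diffeomorphism of a neighborhood of $0$ onto a neighborhood of $\eta$; by (\ref{def Theta}) and the uniqueness clause its inverse is $\xi\mapsto\Theta\left( \eta,\xi\right) $, which is therefore the asserted diffeomorphism. Finally (iv) is obtained by combining (ii) and (iii): for fixed $\xi$, the map $\eta\mapsto\Theta\left( \xi,\eta\right) $ is, by (iii), a diffeomorphism onto a neighborhood of $0$, and composing it with the linear diffeomorphism $u\mapsto-u$ and using (ii) shows that $\eta\mapsto-\Theta\left( \xi,\eta\right) =\Theta\left( \eta,\xi\right) $ is a diffeomorphism onto a neighborhood of $0$ as well.

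No step is genuinely hard. The only point that needs attention is the bookkeeping of neighborhoods in the first two paragraphs — arranging that $U\left( 0\right) $ is balanced and $W$ is symmetric under the coordinate swap, so that the uniqueness part of the implicit function theorem may legitimately be applied to both orderings of the two arguments of $\Theta$; once that is in place, (ii)--(iv) are immediate.
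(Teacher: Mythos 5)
Your proposal is correct and follows essentially the same route as the paper: (i) and (iii) come from the implicit/inverse function theorem applied to the exponential map $E$, (ii) from reversing the flow ($\xi=\exp(V)(\eta)\Rightarrow\eta=\exp(-V)(\xi)$), and (iv) from combining (ii) with (iii). The extra bookkeeping you add (a balanced $U(0)$, a swap-symmetric $W$, and the uniqueness clause of the implicit function theorem) merely makes explicit what the paper leaves implicit.
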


\begin{proof}
We have already proved (i) and (iii); (ii) follows form the fact that, for any
vector field $X$,%
\[
\xi=\exp\left(  X\right)  \left(  \eta\right)  \Longrightarrow\eta=\exp\left(
-X\right)  \left(  \xi\right)  ,
\]
as can be checked by definition of the exponential map; (iv) is then a
consequence of (ii) and (iii). \medskip\hfill
\end{proof}

The map $\Theta$ allows one to restate Theorem \ref{Thm Approximation}
(approximation) in a form more similar to that of Rothschild-Stein.

Recall that a vector field $Z$ has weight $k$ at some fixed point $\eta$ if
$Z$, expressed in terms of the local coordinates $u=\Theta\left(  \eta
,\xi\right)  ,$ has weight $k$ at $u=0,$ in the sense of Definition
\ref{Definition weights}.

It will be useful to recall also the concrete meaning of expressing the same
vector field in different coordinates: if we denote by $Z^{\xi}$ and $Z^{u},$
the vector field $Z$ written as a differential operator which acts on the
variables $\xi$ or $u$, respectively, then
\begin{equation}
Z^{\xi}\left[  f\left(  \Theta\left(  \eta,\xi\right)  \right)  \right]
=\left(  Z^{u}f\right)  \left(  \Theta\left(  \eta,\xi\right)  \right)  .
\label{Zeta csi - u}%
\end{equation}
for any smooth function $f\left(  u\right)  $.

Then we have:

\begin{theorem}
[Approximation, local version]\label{Thm Approximation local}For every
multiindex $I$ the vector field ${X}_{\left[  I\right]  }$ has weight
$\geq-\left\vert I\right\vert $ at any point of $\Omega$. If $Y_{0}%
,Y_{1},\ldots,Y_{n}$ is another system of vector fields (expressed in the same
coordinates $u$) satisfying%
\begin{equation}
\sum_{I\in B}u_{I}e_{I}=\sum_{I\in B}u_{I}Y_{\left[  I\right]  }
\label{Euler Y}%
\end{equation}
then ${X}_{\left[  I\right]  }-Y_{\left[  I\right]  }$ has weight
$\geq1-\left\vert I\right\vert $ at any point of $\Omega$. Moreover, for any
point $\eta\in\Omega$ there exists a system of vector fields $R_{\eta,\left[
I\right]  },$ of weight $\geq1-\left\vert I\right\vert $ at $\eta$ (when
expressed in the coordinates $u$) and smoothly depending on the point $\eta,$
such that%
\begin{equation}
{X}_{\left[  I\right]  }^{\xi}\left[  f\left(  \Theta\left(  \eta,\xi\right)
\right)  \right]  =\left(  Y_{\left[  I\right]  }f\right)  \left(
\Theta\left(  \eta,\xi\right)  \right)  +\left(  R_{\eta,\left[  I\right]
}f\right)  \left(  \Theta\left(  \eta,\xi\right)  \right)  .
\label{approximation}%
\end{equation}

\end{theorem}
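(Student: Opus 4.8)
The plan is to deduce this \emph{local} statement from the \emph{pointwise} results already proved above, chiefly Theorem \ref{Thm weight of X_[I]} and Theorem \ref{Thm Approximation}, by applying them at each point $\eta\in\Omega$ once the coordinate chart $u=\Theta(\eta,\cdot)$ has been identified with the canonical coordinate system centered at $\eta$. The first thing I would record is precisely this identification: by the defining property (\ref{def Theta}), for fixed $\eta$ the map $\xi\mapsto u=\Theta(\eta,\xi)$ is the inverse of $u\mapsto\exp(\sum_{I\in B}u_{I}X_{[I]})(\eta)$, hence it \emph{is} the canonical coordinate system of \S\ref{subsection canonical coordinates} with $\overline{x}=\eta$. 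Granting this, Theorem \ref{Thm weight of X_[I]} applied with $\overline{x}=\eta$ states verbatim that $X_{[I]}$, written in the coordinates $u=\Theta(\eta,\xi)$, has weight $\geq-|I|$ at $u=0$, that is, at $\eta$; since $\eta\in\Omega$ is arbitrary, this is the first assertion.

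For the approximation part I would, for each fixed $\eta$, transport the vector fields $X_{i}$ to the $u$-variables through the diffeomorphism $\xi\mapsto\Theta(\eta,\xi)$ (Proposition \ref{Map Theta}, part iii)); write $X_{[I]}^{u}$ for the transported bracket field, noting that transport commutes with Lie brackets. By the Euler identity (\ref{Euler2}) --- which holds in canonical coordinates --- the transported system satisfies $\sum_{I\in B}u_{I}e_{I}=\sum_{I\in B}u_{I}X_{[I]}^{u}$, while the $Y$'s satisfy the same identity (\ref{Euler Y}) in the coordinates $u$ by hypothesis. Theorem \ref{Thm Approximation}, applied to the two systems $X^{u}$ and $Y$, then gives that $X_{[I]}^{u}-Y_{[I]}$ has weight $\geq1-|I|$ at $u=0$. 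I would simply \emph{define} $R_{\eta,[I]}:=X_{[I]}^{u}-Y_{[I]}$; this delivers at once the weight bound for $R_{\eta,[I]}$ and, read the other way, the weight bound for $X_{[I]}-Y_{[I]}$ at every point of $\Omega$, while the identity (\ref{approximation}) is nothing but the change-of-coordinates formula (\ref{Zeta csi - u}) with $Z=X_{[I]}$, namely $X_{[I]}^{\xi}[f(\Theta(\eta,\xi))]=(X_{[I]}^{u}f)(\Theta(\eta,\xi))=(Y_{[I]}f)(\Theta(\eta,\xi))+(R_{\eta,[I]}f)(\Theta(\eta,\xi))$.

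The only point requiring more than bookkeeping is the \emph{smooth dependence of $R_{\eta,[I]}$ on $\eta$}, and here I would invoke the joint smoothness of $\Theta(\eta,\xi)$ from Proposition \ref{Map Theta} together with the smoothness of $E(u,\eta)=\exp(\sum_{I\in B}u_{I}X_{[I]})(\eta)$ coming from O.D.E.\ theory: the coefficients of $X_{[I]}^{u}$ are obtained from the fixed smooth coefficients of $X_{[I]}$ by composing with $\xi=E(u,\eta)$ and contracting with the Jacobian of $\xi\mapsto\Theta(\eta,\xi)$, all of which depend smoothly on $(u,\eta)$; subtracting the $\eta$-independent $Y_{[I]}$ leaves $R_{\eta,[I]}$ smooth in $\eta$. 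I do not anticipate a genuine obstacle: the entire weight machinery is furnished by Theorems \ref{Thm weight of X_[I]} and \ref{Thm Approximation}, and the one thing to keep straight is the coordinate picture --- the $Y$'s living once and for all in the fixed $u$-coordinates, the $X$'s transported from $\xi$-space, and the identification of $\Theta(\eta,\cdot)$ with canonical coordinates at $\eta$ --- so that Theorem \ref{Thm Approximation} is invoked for two systems genuinely written in the same coordinates.
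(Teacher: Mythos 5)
Your proposal is correct and follows essentially the same route as the paper: identify $u=\Theta(\eta,\cdot)$ with the canonical coordinates centered at $\eta$, invoke Theorems \ref{Thm weight of X_[I]} and \ref{Thm Approximation} pointwise, define $R_{\eta,[I]}=X_{[I]}^{u}-Y_{[I]}$, and obtain (\ref{approximation}) from (\ref{Zeta csi - u}). The only (cosmetic) difference is in checking smooth dependence on $\eta$: you compute the coefficients of $X_{[I]}^{u}$ directly via $E(u,\eta)$ and the Jacobian of $\Theta$, whereas the paper tests (\ref{approximation}) on the coordinate functions $f(u)=u_{J}$ and then uses the joint smoothness of $\Theta$ — both rest on the same facts from Proposition \ref{Map Theta}.
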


\begin{proof}
The first part of the theorem is exactly Theorem \ref{Thm weight of X_[I]}
plus Theorem \ref{Thm Approximation}, stated at any point $\eta.$ Saying that
${X}_{\left[  I\right]  }-Y_{\left[  I\right]  }$ has weight $\geq1-\left\vert
I\right\vert $ at $\eta,$ just by definition means that the vector field
$R_{\eta,\left[  I\right]  }={X}_{\left[  I\right]  }^{u}-Y_{\left[  I\right]
}$ has weight $\geq1-\left\vert I\right\vert $ at $u=0$. Here the superscript
$u$ in ${X}_{\left[  I\right]  }^{u}$ emphasizes that this vector field is
expressed in terms of the coordinates $u.$ By (\ref{Zeta csi - u}), we can
rewrite it in terms of coordinates $\xi,$ getting (\ref{approximation}). It
remains to check that $R_{\eta,\left[  I\right]  }$ depends smoothly on
$\eta.$ Let%
\[
R_{\eta,\left[  I\right]  }=\sum_{J}b_{IJ}\left(  \eta,u\right)
\partial_{u_{J}};
\]
then, applying (\ref{approximation}) to the function $f\left(  u\right)
=u_{J}$ we get%
\[
b_{IJ}\left(  \eta,\Theta\left(  \eta,\xi\right)  \right)  = {X}_{\left[
I\right]  }^{\xi}\left[  \left(  \Theta\left(  \eta,\xi\right)  \right)
_{J}\right]  -\left(  Y_{\left[  I\right]  }u_{J}\right)  \left(
\Theta\left(  \eta,\xi\right)  \right)  .
\]
The right-hand side of this equation is a smooth function of $\left(
\eta\,,\xi\right)  ,$ since $\Theta$ is smooth (see Proposition
\ref{Map Theta}); hence the functions%
\[
\left(  \eta\,,\xi\right)  \longmapsto b_{IJ}\left(  \eta,\Theta\left(
\eta,\xi\right)  \right)
\]
are smooth; fixing $\xi$ and composing with the diffeomorphism $u=\Theta
\left(  \eta,\xi\right)  $ we read that $b_{IJ}\left(  \eta,u\right)  $ are
smooth functions, which is what we needed to prove. \hfill
\end{proof}

\begin{remark}
The last statement is perfectly analogous of the approximation theorem proved
by Rothschild-Stein, but somewhat more general, since the vector fields
$Y_{\left[  I\right]  }$ need not be left invariant on a homogeneous group;
they only need to satisfy (\ref{Euler Y}).
\end{remark}

\subsection{Approximation by left invariant vector fields}

The standard application of Theorem \ref{Thm Approximation local} requires the
construction of a particular system of vector fields $\left\{  Y_{\left[
I\right]  }\right\}  _{I\in B}$ enjoying special properties.

In the following statement, the numbers $p,n,r$ keep the same meaning they
have in the previous subsections; also the system of multiindices $\left(
I\in B\right)  $ is the same.

\begin{theorem}
\label{ilteoremabrutto}There exist in $\mathbb{R}^{p}$ a system of smooth
vector fields $Y_{0},Y_{1},...,Y_{n}$ and a structure of homogeneous group
$G$, that is, a Lie group operation $u\circ v$ (\textquotedblleft
translation\textquotedblright) and a one-parameter family $\left\{
\delta_{\lambda}\right\}  _{\lambda>0}$ of automorphisms (\textquotedblleft
dilations\textquotedblright), acting as%
\[
\delta_{\lambda}\left(  \left(  u_{I}\right)  _{I\in B}\right)  =\left(
\lambda^{\left\vert I\right\vert }u_{I}\right)  _{I\in B}\text{ ,}%
\]
such that:

\begin{enumerate}
\item[(i)] the vector fields $Y_{0},Y_{1},...,Y_{n}$ are free up to weight $r$
in $\mathbb{R}^{p}$ and the vectors $\left\{  \left(  Y_{\left[  I\right]
}\right)  _{u}\right\}  _{\left\vert I\right\vert \leq r}$ span $\mathbb{R}%
^{p}$ at any point $u$ of the space;

\item[(ii)] the $Y_{\left[  I\right]  }$'s are left invariant and homogeneous
of degree $\left\vert I\right\vert $ with respect to\ the dilations in $G$;

\item[(iii)] at $u=0$, the $Y_{\left[  I\right]  }$'s coincide with the local
basis associated to the coordinates $u_{I}$, that is,
\[
\left(  Y_{\left[  I\right]  }\right)  _{0}=\frac{\partial}{\partial u_{I}};
\]

\item[(iv)] the $Y_{\left[  I\right]  }$'s satisfy (\ref{Euler Y});

\item[(v)] in the group $G$, the inverse $u^{-1}$ of an element is just its
(Euclidean) opposite $-u$.
\end{enumerate}

We stress that all the previous properties hold simultaneously, with respect
to the same system of coordinates in the space $\mathbb{R}^{p}.$ These
coordinates will be identified with the canonical coordinates $u$ induced by
the vector fields ${X}_{\left[  I\right]  }$ (see
\S \ref{subsection canonical coordinates}).
\end{theorem}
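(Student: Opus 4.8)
The plan is to construct the homogeneous group $G$ directly from the free Lie algebra structure, following the classical Rothschild–Stein/Folland construction but keeping track of the weighted grading induced by $p_0=2$. First I would consider the free Lie algebra $\mathfrak{g}$ generated by abstract symbols $\xi_0,\xi_1,\dots,\xi_n$ with $\xi_0$ of weight $2$ and $\xi_i$ of weight $1$, modulo the ideal of elements of weight $>r$; the hypothesis that the ${X}_{[I]}$ are free up to weight $r$ means that the spanning set $\{\xi_{[I]}\}_{I\in B}$ projects to a basis of $\mathfrak{g}$, so $\mathfrak{g}$ is a nilpotent Lie algebra of dimension $p$. Grading $\mathfrak{g}=\bigoplus_{k=1}^{r}\mathfrak{g}_k$ by weight makes it a \emph{stratified-in-weight} (Carnot-type, but with the first layer possibly containing a weight-2 generator counted separately) nilpotent Lie algebra, and the dilations $\delta_\lambda$ acting as $\lambda^k$ on $\mathfrak{g}_k$ are Lie algebra automorphisms precisely because the bracket respects the additive weight grading.

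Next I would pass to the group: take $G$ to be $\mathfrak{g}$ itself as a manifold (i.e. $\mathbb{R}^p$ with coordinates $u=(u_I)_{I\in B}$ read off via the basis), with group law $u\circ v$ given by the Baker–Campbell–Hausdorff formula, which is a polynomial because $\mathfrak{g}$ is nilpotent. In these \emph{exponential} coordinates the identity is $0$ and the inverse of $u$ is $-u$, giving (v); the dilations $\delta_\lambda$ are automorphisms of $\circ$ since they are automorphisms of $\mathfrak{g}$, giving the homogeneous-group structure. Define $Y_{[I]}$ for $I\in B$ to be the left-invariant vector field on $G$ whose value at $0$ is $\partial/\partial u_I$; because $\xi_{[I]}$ lies in $\mathfrak{g}_{|I|}$, the corresponding left-invariant field is $\delta_\lambda$-homogeneous of degree $|I|$, which is (ii)–(iii). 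Freeness up to weight $r$ of the $Y$'s and the spanning of $\mathbb{R}^p$ by $\{(Y_{[I]})_u\}_{|I|\le r}$ (property (i)) is immediate because, by construction, the structure constants of the $Y_{[I]}$'s are exactly the universal ones of the free algebra $\mathfrak{g}$, so the abstract relations among the $Y_{[I]}$'s are precisely those forced by antisymmetry and Jacobi.

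It remains to verify (iv), the Euler-type identity $\sum_{I\in B}u_I e_I=\sum_{I\in B}u_I Y_{[I]}$. This is where I expect to spend the real effort, and it is the analogue for the $Y$'s of the Lemma preceding Definition \ref{Definition weights}. The clean way is to observe that in exponential coordinates the left-invariant field $Y_{[I]}$ at the point $u$ is $\big(d(L_u)_0\big)\,\xi_{[I]}$, and that the radial vector field $\sum_I u_I\,\partial/\partial u_I$ is the infinitesimal generator of the dilation flow $\lambda\mapsto\delta_{e^\tau}(u)$ only after reweighting — so instead I would argue pointwise: the curve $\tau\mapsto \exp_G(\tau\sum_I u_I\xi_{[I]})$ is, on one hand, the integral curve of $\sum_I u_I Y_{[I]}$ through $0$ evaluated appropriately, and on the other hand, by BCH with a single Lie algebra element, equals the ray $\tau\mapsto \tau u$ in exponential coordinates; differentiating this identity in $\tau$ and using left-translation to move the base point gives $\sum_I u_I (Y_{[I]})_u=\sum_I u_I (e_I)_u$ exactly as in the proof of \eqref{Euler}. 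The only subtlety is bookkeeping: one must be sure that the basis $\{\xi_{[I]}\}_{I\in B}$ chosen abstractly is compatible with the exponential-coordinate identification, which is true by definition of how we set up $G$. Finally I would note that the last sentence of the statement — identifying these coordinates with the canonical coordinates of \S\ref{subsection canonical coordinates} — requires no proof: it is a relabeling, legitimate because both coordinate systems are built from the \emph{same} fixed basis $B$, and (iv) together with \eqref{Euler} then says the $X_{[I]}$ and $Y_{[I]}$ satisfy the same Euler identity, which is exactly the hypothesis needed to feed into Theorem \ref{Thm Approximation local}.
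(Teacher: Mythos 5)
Your construction coincides with the paper's: the free nilpotent Lie algebra of type II graded by the weights $p_{0}=2$, $p_{i}=1$, identified with $\mathbb{R}^{p}$ and given the Baker-Campbell-Hausdorff product, with $Y_{\left[ I\right] }$ the left invariant fields equal to $\partial/\partial u_{I}$ at the origin and (v) read off from exponential coordinates. The only variation is your check of (iv), i.e.\ of (\ref{Euler Y}): you use that one-parameter subgroups are the rays $\tau\mapsto\tau u$ in exponential coordinates, arguing as in the proof of (\ref{Euler}) (the route also indicated in the Remark following the theorem), whereas the paper's proof expands $S\left( x,y\right) =\sum_{j+k\geq1}Z_{j,k}\left( x,y\right) $ and uses that the terms $Z_{j,1}\left( x,x\right) $ vanish for $j\geq1$; both verifications are correct.
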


\begin{proof}
For the following abstract construction we refer to \cite[pp.3-15]{Ric}.

1. Let us consider the Lie algebra $\mathfrak{g}$ obtained by quotienting the
free Lie algebra with generators $Z_{0},...,Z_{n}$ with respect to the ideal
spanned by all commutators of weight greater than $r$ (this is called the
\textit{free nilpotent Lie algebra of type II\/} in \cite{RS}); here
$Z_{0},...,Z_{n}$ are thought as abstract generators, having weight $2$
$\left(  Z_{0}\right)  $ and $1$ $\left(  Z_{1},Z_{2},...,Z_{n}\right)  .$
This abstract Lie algebra is isomorphic to $\mathbb{R}^{d}$ for some $d$. We
claim that actually $d=p$. Namely the structure of the free Lie algebra of
type II of step $r$ on $n$ generators can depend only on $n,r,$ and since the
Lie algebra generated by the $X_{i}$'s is $\mathbb{R}^{p},$ $p=d.$

Hence the Lie algebra $\mathfrak{g}$ will be identified with $\mathbb{R}^{p}$
from now on.

2. We then introduce in $\mathbb{R}^{p}$ an operation $\circ,$ defined by:%
\begin{equation}
x\circ y\equiv S\left(  x,y\right)  \equiv x+y+\frac{1}{2}\left[  x,y\right]
+\frac{1}{12}\left[  \left[  x,y\right]  ,y\right]  -\frac{1}{12}\left[
\left[  x,y\right]  ,x\right]  +... \label{S(x,y)}%
\end{equation}
In the previous formula, $\left[  x,y\right]  $ denotes the commutator in the
Lie algebra $\mathfrak{g}$ (whose elements have been identified with points of
$\mathbb{R}^{p}$); the sum is finite because the Lie algebra is nilpotent, and
the precise definition of $S$ is given by
\[
S\left(  \cdot,\cdot\right)  =S^{\prime}\left(  1,1,\cdot,\cdot\right)
\]
where $S^{\prime}$ is the function appearing in the Baker-Campbell-Hausdorff
formula:%
\begin{equation}
\exp\left(  sX\right)  \exp\left(  tY\right)  =\exp\left(  S^{\prime}\left(
s,t,X,Y\right)  \right)  \label{BCH}%
\end{equation}
which holds, for $s,t$ small enough, for any couple of smooth vector fields
$X,Y$ which generate a finite dimensional Lie algebra. More precisely, it is
known that%
\begin{equation}
S\left(  x,y\right)  =\sum_{j+k\geq1}Z_{j,k}\left(  x,y\right)
\label{S(x,y) Z}%
\end{equation}
where each $Z_{j,k}\left(  x,y\right)  $ is a fixed linear combination of
iterated commutators of $x$ and $y,$ containing $j$ times $x$ and $k$ times
$y$. In terms of coordinates in $\mathbb{R}^{p},$ this function can be written
as%
\[
S\left(  x,y\right)  =\left(  S_{1}\left(  x,y\right)  ,S_{2}\left(
x,y\right)  ,...,S_{p}\left(  x,y\right)  \right)
\]
where each $S_{j}$ is a polynomial in $x,y$. Then (see Theorem 4.2 in
\cite{Ric}) the operation $\circ$ defines in $\mathbb{R}^{p}$ a structure of
homogeneous Lie group $G$, whose Lie algebra Lie$\left(  G\right)  $ is
isomorphic to $\mathfrak{g}.$

3. The isomorphism of $\mathfrak{g}$ with Lie$\left(  G\right)  ,$ explicitly,
means that if we define $Y_{\left[  I\right]  }$ as the left invariant (with
respect to $G$) vector field in $\mathbb{R}^{p}$ which agrees with
$\partial_{u_{I}}$ at the origin, then the Lie algebra generated by $\left\{
Y_{\left[  I\right]  }\right\}  _{I\in B}$ is isomorphic to $\mathfrak{g}$; in
particular, it is free up to weight $r$ and the vectors $\left\{  \left(
Y_{\left[  I\right]  }\right)  _{u}\right\}  _{\left\vert I\right\vert \leq
r}$ span $\mathbb{R}^{p}$ at any point. Clearly, this isomorphism logically
depends on the definition of $\circ$ in terms of the Baker-Campbell-Hausdorff formula.

4. It remains to show (iv) and (v). Both follow from taking a look inside the
operation $S\left(  x,y\right)  $. As to (v), from (\ref{S(x,y)}) we read that%
\[
S\left(  x,-x\right)  =x-x+\frac{1}{2}\left[  x,-x\right]  +\frac{1}%
{12}\left[  \left[  x,-x\right]  ,-x\right]  -\frac{1}{12}\left[  \left[
x,-x\right]  ,x\right]  +...=0
\]
since $\left[  x,x\right]  =0$. Therefore the Euclidean opposite is also the
inverse in the group. To prove (iv), we start writing, for any smooth function
$f$,%
\[
\left(  Y_{\left[  I\right]  }f\right)  \left(  x\right)  =\frac{d}%
{dt}_{\left/  _{t=0}\right.  }f\left(  x\circ te_{I}\right)
\]
by (\ref{S(x,y) Z}),
\begin{align*}
&  =\frac{d}{dt}_{\left/  _{t=0}\right.  }f\left(  \sum_{j+k\geq1}%
Z_{j,k}\left(  x,te_{I}\right)  \right)  =\nabla f\left(  x\right)  \cdot
\sum_{j+k\geq1}\frac{d}{dt}_{\left/  _{t=0}\right.  }t^{k}Z_{j,k}\left(
x,e_{I}\right) \\
&  =\nabla f\left(  x\right)  \cdot\sum_{j\geq0}Z_{j,1}\left(  x,e_{I}\right)
.
\end{align*}
Then we compute%
\begin{align*}
&  \sum_{I\in B}x_{I}\left(  Y_{\left[  I\right]  }f\right)  \left(  x\right)
=\sum_{I\in B}\nabla f\left(  x\right)  \cdot\sum_{j\geq0}Z_{j,1}\left(
x,x_{I}e_{I}\right)  =\\
&  =\nabla f\left(  x\right)  \cdot\sum_{j\geq0}Z_{j,1}\left(  x,x\right)
=\nabla f\left(  x\right)  \cdot x=\sum_{I\in B}x_{I}\partial_{x_{I}}f\left(
x\right)
\end{align*}
that is (\ref{Euler Y}). The theorem is completely proved. \medskip\hfill
\end{proof}

Theorem \ref{Thm Approximation local} can now applied choosing the left
invariant vector fields $Y_{\left[  I\right]  }$ as the approximating system.
The map $u=\Theta\left(  \eta,\xi\right)  $ can now be regarded as a
diffeomorphism from a neighborhood of $\eta$ onto a neighborhood of $0$
\textit{in the group} $G$. In other words, $\Theta\left(  \eta,\xi\right)  $
is an element of the group $G,$ and one has:%
\[
\Theta\left(  \eta,\xi\right)  =-\Theta\left(  \xi,\eta\right)  =\Theta\left(
\xi,\eta\right)  ^{-1}.
\]

\begin{remark}
Before stating their Theorem 5, H\"ormander and Melin suggest how to connect
it with Theorem 5 in \cite{RS}. We would like now to explain in more details
this link, giving in this way a reformulation of some of the results of
Theorem \ref{ilteoremabrutto}.

Let us consider the Lie algebra $\mathfrak{g}$ obtained by quotienting the
free Lie algebra with generators $X_{0},...,X_{n}$ with respect to the ideal
spanned by all commutators of weight greater than $r$. If $G$ is the (unique,
nilpotent) connected and simply connected Lie group having $\mathfrak{g}$ as
its Lie algebra, then the exponential map $\exp:\mathfrak{g}\rightarrow G$ is
a diffeomorphism, so that $\mathbb{R}^{p}\simeq\mathfrak{g}\simeq G$. We
denote with $Y_{i}\in\mathfrak{g}$ the equivalence class of $X_{i}$. It can be
seen, as usual, as a left-invariant vector field on $G$. It is also clear that
$\left\{  \left(  Y_{\left[  I\right]  }\right)  _{0}\right\}  _{I\in B}$ is a
basis of the tangent space $T_{0}G$, where $0$ here stands for the identity of
$G$. We can thus define a system of coordinates $\left(  u_{I}\right)  _{I\in
B}$ on $G$ by means of
\begin{equation}
\mathbb{R}^{p}\ni\left(  u_{I}\right)  _{I\in B}\mapsto\exp\left(  \sum_{I\in
B}u_{I}Y_{\left[  I\right]  }\right)  \left(  0\right)  =\exp\left(
\sum_{I\in B}u_{I}Y_{\left[  I\right]  }\right)  \in G, \label{ucooG}%
\end{equation}
as we did previously for the vector fields $X_{i}$ on $\mathbb{R}^{p}$. (The
first $\exp$ in (\ref{ucooG}) is the exponential of a vector field, while the
second one is the exponential map of the group $G$). Notice however that in
this case the coordinate system is global, since the exponential map is a
diffeomorphism. We used the same notation for the two sets of coordinates
because they will be immediately identified. Indeed, we can associate with
$\left(  u_{I}\right)  _{I\in B}\in\mathbb{R}^{p}$ a point in $\mathbb{R}^{p}$
by (\ref{can-coos}) and an element in $G$ by (\ref{ucooG}). This provides a
map from $G$ to $\mathbb{R}^{p}$, and allows us to compare the vector fields
$X_{\left[  I\right]  }$ and $Y_{\left[  I\right]  }$, once they have been
written in these coordinates. Put in a little bit different way, we can use
(\ref{ucooG}) to identify $G$ with the \textquotedblleft
same\textquotedblright\ $\mathbb{R}^{p}$ where the vector fields $X_{i}$ are
defined, so that the $X_{i}$ and the $Y_{i}$ live in the same space. At this
point, in order to apply Theorem \ref{Thm Approximation} we simply have to
notice that, arguing as in the proof of Lemma \ref{Euler}, one has
\[
\sum_{I\in B}u_{I}e_{I}=\sum_{I\in B}u_{I}Y_{\left[  I\right]  },
\]
where $e_{I}=\partial/\partial u_{I}$.
\end{remark}

\subsection{The ball-box theorem for free smooth vector
fields\label{sec ballbox}}

We now to draw some consequences from the study of weights of vector fields
(Theorem \ref{Thm weight of X_[I]}) in terms of the geometry of balls induced
by vector fields. We will get, still in the context of free smooth vector
fields, a ball-box theorem which is enough to get a control of the volume of
the balls in this setting. In turn, this fact will be exploited in the next
subsection to compare the distance induced by lifted vector fields with
Rothschild-Stein's quasidistance.

The subelliptic metric introduced by Nagel-Stein-Wainger in \cite{NSW}, in
this situation, is defined as follows:

\begin{definition}
\label{Definition CC distance}For any $\delta>0,$ let $C\left(  \delta\right)
$ be the class of absolutely continuous mappings $\varphi:\left[  0,1\right]
\longrightarrow\Omega$ which satisfy%
\[
\varphi^{\prime}\left(  t\right)  =\sum_{\left\vert I\right\vert \leq s}%
a_{I}\left(  t\right)  \left(  X_{\left[  I\right]  }\right)  _{\varphi\left(
t\right)  }\text{ a.e.}%
\]
with
\[
\left\vert a_{I}\left(  t\right)  \right\vert \leq\delta^{\left\vert
I\right\vert }.
\]
Then define%
\[
d\left(  x,y\right)  =\inf\left\{  \delta>0:\exists\varphi\in C\left(
\delta\right)  \text{ with }\varphi\left(  0\right)  =x,\varphi\left(
1\right)  =y\right\}  .
\]

\end{definition}

\begin{remark}
The quantity $d\left(  x,y\right)  $ is finite for any two points
$x,y\in\Omega.$ Namely, let $\varphi:\left[  0,1\right]  \longrightarrow
\Omega$ be any $C^{1}$ curve joining $x$ to $y$; since the $\left\{  \left(
X_{\left[  I\right]  }\right)  _{x}\right\}  _{\left\vert I\right\vert \leq
r}$ span $\mathbb{R}^{p}$ at any point $x\in\Omega,$ $\varphi^{\prime}\left(
t\right)  $ can always be expressed in the form
\[
\sum_{\left\vert I\right\vert \leq r}a_{I}\left(  t\right)  \left(  X_{\left[
I\right]  }\right)  _{\varphi\left(  t\right)  },
\]
for suitable bounded functions $a_{I}\left(  t\right)  ;$ then the curve
$\varphi$ will belong to the class $C\left(  \delta\right)  ,$ for $\delta>0$
large enough, and $d\left(  x,y\right)  $ will be finite and not exceeding
this $\delta$.
\end{remark}

\begin{proposition}
\label{Prop fefferman phong}The function $d:\Omega\times\Omega\rightarrow
\mathbb{R}$ is a distance. Moreover, for any $\Omega^{\prime}\Subset\Omega$
there exist positive constants $c_{1},c_{2}$ such that%
\begin{equation}
c_{1}\left\vert x-y\right\vert \leq d\left(  x,y\right)  \leq c_{2}\left\vert
x-y\right\vert ^{1/r}\text{ for any }x,y\in\Omega^{\prime}.
\label{fefferman-phong}%
\end{equation}

\end{proposition}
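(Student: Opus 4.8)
The claim that $d$ is a distance is mostly formal: symmetry follows because $C(\delta)$ is closed under time-reversal (replace $\varphi(t)$ by $\varphi(1-t)$, which only changes the sign of the $a_I$'s, so the bound $|a_I|\le\delta^{|I|}$ is preserved), and the triangle inequality follows by concatenating a curve in $C(\delta_1)$ from $x$ to $y$ with a curve in $C(\delta_2)$ from $y$ to $z$ and reparametrizing; the concatenation lies in $C(\delta_1+\delta_2)$, so $d(x,z)\le d(x,y)+d(y,z)$. Finiteness is the content of the Remark just above the statement. The only nontrivial point for ``distance'' is that $d(x,y)=0\Rightarrow x=y$, and this will come for free from the left-hand inequality in \eqref{fefferman-phong}, so the real work is the two-sided estimate.

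\textbf{The lower bound $c_1|x-y|\le d(x,y)$.} Here I would argue directly. Fix $\Omega'\Subset\Omega$. Since the coefficients of all the $X_{[I]}$ with $|I|\le r$ are continuous (Lemma 1) on a slightly larger compact set, they are bounded there, say $\|(X_{[I]})_z\|\le M$ for all $z$ in a compact neighborhood of $\overline{\Omega'}$ and all $|I|\le r$. If $\varphi\in C(\delta)$ joins $x,y\in\Omega'$ with $\delta\le 1$ (we may assume this, since otherwise the bound is trivial for $c_1$ small), then staying inside that compact set, $|\varphi'(t)|\le\sum_{|I|\le r}|a_I(t)|\,M\le M\sum_{|I|\le r}\delta^{|I|}\le C\,\delta$ because each $\delta^{|I|}\le\delta$ when $\delta\le1$. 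Integrating, $|x-y|\le\int_0^1|\varphi'(t)|\,dt\le C\delta$; taking the infimum over $\delta$ gives $|x-y|\le C\,d(x,y)$, i.e. $c_1|x-y|\le d(x,y)$ with $c_1=1/C$. One should check that a curve realizing (nearly) $d(x,y)$ can be taken to stay in the relevant compact set — this is standard: if $d(x,y)$ is small the curve cannot leave a small neighborhood of $x$, by the same length estimate applied up to the first exit time.

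\textbf{The upper bound $d(x,y)\le c_2|x-y|^{1/r}$.} This is where Theorem \ref{Thm weight of X_[I]} enters. Fix $\overline{x}\in\Omega'$ and pass to the canonical coordinates $u$ centered at $\overline{x}$, in which \eqref{Euler} holds and each $X_{[I]}$ has weight $\ge-|I|$. In these coordinates the coordinate curve $t\mapsto t\,e_I=(0,\dots,u_I,\dots,0)$ (varying only $u_I$) has velocity $e_I=\partial/\partial u_I$, and by \eqref{aaa} one has $e_I=X_{[I]}+\sum_{K\in B}u_K\,\mathrm{ad}\,e_I(X_{[K]})$; since $\mathrm{ad}\,e_I(X_{[K]})$ has weight $\ge-|I|-|K|$, its coefficients are $O(1)$ near $0$, so along the segment $|u_K|\le\epsilon^{|K|}$ we can write $e_I=\sum_{|J|\le r}b_J(u)\,X_{[J]}$ with $|b_J(u)|\le C\,\epsilon^{|J|-|I|}$ (this is exactly the ``weight $\ge-|I|$'' bookkeeping: a term of weight $\ge-|I|$ evaluated where $|u_K|\lesssim\epsilon^{|K|}$ is $\lesssim\epsilon^{-|I|}$, and it multiplies $X_{[J]}$ picking up $\epsilon^{|J|}$ after rescaling time). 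Hence moving a Euclidean distance $\epsilon^{|I|}$ in the $u_I$-direction costs $d$-distance $\le C\epsilon$. Concatenating the $p$ coordinate moves, if $|u|$ denotes the canonical coordinates of $y$ relative to $\overline{x}=x$, we get $d(x,y)\le C\max_{I\in B}|u_I|^{1/|I|}\le C\max_I |u_I|^{1/r}$ (for $|u_I|\le1$), and since $u$ depends smoothly on the Euclidean point with nonsingular Jacobian at $0$, $|u|\le C|x-y|$ locally, giving $d(x,y)\le c_2|x-y|^{1/r}$. A compactness/chaining argument over $\Omega'$ (cover $\Omega'$ by finitely many such canonical-coordinate charts and, if $x,y$ are far apart, join them by a chain of nearby points) removes the smallness restriction.

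\textbf{Main obstacle.} The delicate point is the upper bound: one must turn the qualitative statement ``$X_{[I]}$ has weight $\ge-|I|$'' into the quantitative estimate that, on the anisotropic box $\{|u_K|\le\epsilon^{|K|}\}$, the field $e_I$ is an $O(\epsilon)$-small combination of the $X_{[J]}$'s in the metric sense — and then to organize the concatenation of coordinate moves so that each intermediate point stays inside the box where the estimate is valid. Getting the bookkeeping of weights-versus-powers-of-$\epsilon$ exactly right, and checking uniformity of all constants as $\overline{x}$ ranges over $\Omega'$ (using continuity of the basis $\{(X_{[I]})_{\overline{x}}\}_{I\in B}$ and hence a uniform lower bound on the radius of the canonical chart, as already noted in \S\ref{subsection canonical coordinates}), is the real content; everything else is routine.
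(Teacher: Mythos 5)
Your verification of the metric axioms and of the lower bound $c_{1}\left\vert x-y\right\vert \leq d\left(  x,y\right)  $ is correct and is the standard argument (note that the paper itself does not prove Proposition \ref{Prop fefferman phong}, but refers to \cite{NSW} and \cite{BBP}). The problem is the upper bound: the step you yourself label the ``main obstacle'' --- that on the box $\left\{  \left\vert u_{K}\right\vert \leq\epsilon^{\left\vert K\right\vert }\right\}  $ one can write $e_{I}=\sum_{J}b_{J}\left(  u\right)  X_{\left[  J\right]  }$ with $\left\vert b_{J}\left(  u\right)  \right\vert \leq C\epsilon^{\left\vert J\right\vert -\left\vert I\right\vert }$ --- is asserted, not proved, and it does not follow from (\ref{aaa}) as you use it. Identity (\ref{aaa}) gives $e_{I}=X_{\left[  I\right]  }+\sum_{K}u_{K}\operatorname{ad}e_{I}\left(  X_{\left[  K\right]  }\right)  $, but the correction term is a vector field whose components are controlled in the coordinate basis $\left\{  e_{J}\right\}  $, not in the basis $\left\{  X_{\left[  J\right]  }\right\}  $; to obtain your decomposition you must invert the change-of-basis matrix $\left(  f_{JK}\right)  $ defined by $X_{\left[  J\right]  }=\sum_{K}f_{JK}e_{K}$ and show that the anisotropic bounds of Theorem \ref{Thm weight of X_[I]} survive the inversion (this is true --- e.g.\ by rescaling $u=\delta_{\epsilon}\left(  v\right)  $ and a determinant/cofactor estimate, or by comparing with the uniformly invertible matrix of the $Y_{\left[  J\right]  }$'s via Theorem \ref{Thm Approximation} --- but that argument is exactly what is missing, together with its uniformity in $\overline{x}\in\Omega^{\prime}$). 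As written, the upper bound is therefore not established.

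More importantly, the difficulty is self-inflicted: there is no need to move one canonical coordinate at a time. If $u$ denotes the canonical coordinates of $y$ centered at $x$, the single curve $\varphi\left(  t\right)  =\exp\left(  t\sum_{I\in B}u_{I}X_{\left[  I\right]  }\right)  \left(  x\right)  $ joins $x$ to $y$ and satisfies $\varphi^{\prime}\left(  t\right)  =\sum_{I\in B}u_{I}\left(  X_{\left[  I\right]  }\right)  _{\varphi\left(  t\right)  }$, hence belongs to $C\left(  \delta\right)  $ with $\delta=\max_{I\in B}\left\vert u_{I}\right\vert ^{1/\left\vert I\right\vert }$; since $y\mapsto u$ is a smooth local diffeomorphism vanishing at $y=x$, with uniform control for $x\in\Omega^{\prime}$, this gives $d\left(  x,y\right)  \leq C\left\vert x-y\right\vert ^{1/r}$ for $\left\vert x-y\right\vert $ small, and your chaining remark handles distant pairs. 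This is precisely the admissible curve used in the paper's proof of the inclusion $Box\left(  \overline{x},R\right)  \subseteq B\left(  \overline{x},R\right)  $ in Theorem \ref{Thm ball-box free}, and it needs no weight estimates at all: Theorem \ref{Thm weight of X_[I]} is required only for the reverse inclusion, which Proposition \ref{Prop fefferman phong} does not use. I recommend replacing your coordinate-by-coordinate construction with this one-curve argument (or, if you keep your route, supplying the matrix-inversion estimate in full).
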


The previous proposition is well known (see \cite[Proposition 1.1]{NSW}; in
\cite{BBP} this is proved also for nonsmooth vector fields).

We are now in position to state our ball-box theorem.

\begin{notation}
For fixed $\overline{x}\in\mathbb{R}^{p},R>0,$ let%
\[
Box\left(  \overline{x},R\right)  =\left\{  x\in\mathbb{R}^{p}:x=\exp\left(
\sum_{I\in B}u_{I}X_{\left[  I\right]  }\right)  \left(  \overline{x}\right)
:\left\vert u_{I}\right\vert <R^{\left\vert I\right\vert }\text{ for any }I\in
B\right\}  ;
\]
In canonical coordinates $u_{I}$, the subset $Box\left(  \overline
{x},R\right)  $ simply becomes:%
\[
Box\left(  R\right)  =\left\{  u\in\mathbb{R}^{p}:\left\vert u_{I}\right\vert
<R^{\left\vert I\right\vert }\text{ for any }I\in B\right\}  .
\]
Let $B\left(  x,R\right)  $ denote the metric ball of center $x$ and radius
$R$ in $\mathbb{R}^{p}$, with respect to the distance $d$ induced by the
vector fields $\left\{  X_{\left[  I\right]  }\right\}  _{I\in B}$.

Also, let us define the following quantities related to canonical coordinates:%
\[
\left\vert u\right\vert _{k}=\sum_{\left\vert J\right\vert =k}\left\vert
u_{J}\right\vert \text{ for }k=1,2,...,s
\]%
\[
\left\Vert u\right\Vert =\sum_{k=1}^{s}\left\vert u\right\vert _{k}^{1/k}.
\]

\end{notation}

\begin{theorem}
[Ball-box theorem for free vector fields]\label{Thm ball-box free}For every
$\Omega^{\prime}\Subset\Omega$ there exist positive constants $C,R_{0}%
,c_{1},c_{2},c_{3}$ depending on $\Omega,\Omega^{\prime}$ and the system
$\left\{  X_{\left[  I\right]  }\right\}  _{I\in B}$ such that, for any
$\overline{x}\in\Omega^{\prime},R\leq R_{0},$

\begin{itemize}
\item[(i)]
\[
Box\left(  \overline{x},R\right)  \subseteq B\left(  \overline{x},R\right)
\subseteq Box\left(  \overline{x},CR\right)
\]

\item[(ii)]
\[
c_{1}R^{Q}\leq\left\vert B\left(  \overline{x},R\right)  \right\vert \leq
c_{2}R^{Q}%
\]
where $Q=\sum_{I\in B}\left\vert I\right\vert $ plays the role of
\textquotedblleft homogeneous dimension\textquotedblright

\item[(iii)]
\[
\left\vert B\left(  \overline{x},2R\right)  \right\vert \leq c_{3}\left\vert
B\left(  \overline{x},R\right)  \right\vert .
\]

\end{itemize}
\end{theorem}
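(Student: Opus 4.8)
The three assertions are of decreasing difficulty: (ii) and (iii) will be extracted from (i) with essentially no further work, so the whole substance lies in the ball–box inclusion (i), and within it in the inclusion $B(\overline{x},R)\subseteq Box(\overline{x},CR)$. I would run the entire argument in the canonical coordinates $u=(u_I)_{I\in B}$ centred at $\overline{x}$ of \S\ref{subsection canonical coordinates}, in which $\overline{x}\leftrightarrow u=0$, the set $Box(\overline{x},R)$ becomes the Euclidean box $\{|u_I|<R^{|I|}:I\in B\}$, and, by Theorem \ref{Thm weight of X_[I]}, every $X_{[M]}$ has weight $\geq-|M|$. The inclusion $Box(\overline{x},R)\subseteq B(\overline{x},R)$ is then an immediate observation: if $y$ has canonical coordinates $u$ with $|u_I|<R^{|I|}$, the flow $\varphi(t)=\exp\bigl(t\sum_{I\in B}u_IX_{[I]}\bigr)(\overline{x})$, $t\in[0,1]$, joins $\overline{x}$ to $y$, solves $\varphi'=\sum_{I\in B}u_I(X_{[I]})_{\varphi}$ with constant controls, hence belongs to $C(R')$ (Definition \ref{Definition CC distance}) with $R':=\max_{I\in B}|u_I|^{1/|I|}<R$, so $d(\overline{x},y)\le R'<R$; for $R\le R_0$ small this curve stays in $\Omega$, uniformly for $\overline{x}\in\Omega'$.

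For the converse inclusion the plan is a rescaling argument combined with the approximation theorem. Let $\delta_\lambda(u)=(\lambda^{|I|}u_I)_{I\in B}$ be the group dilations of Theorem \ref{ilteoremabrutto}; for $\lambda\le\lambda_0$ write $X_{[M]}$ in the rescaled variables $v$ defined by $u=\delta_\lambda(v)$ and multiply by $\lambda^{|M|}$, calling the resulting vector field $Z_M^{\lambda}$. Decomposing $X_{[M]}=Y_{[M]}+(X_{[M]}-Y_{[M]})$, where $Y_{[M]}$ is the homogeneous left‑invariant model of Theorem \ref{ilteoremabrutto} and $X_{[M]}-Y_{[M]}$ has weight $\geq1-|M|$ by Theorem \ref{Thm Approximation local}, and using the homogeneity of $Y_{[M]}$ together with a Taylor expansion of the coefficients (the remainder being controlled by its order exactly as in the proof of Theorem \ref{Thm weight of X_[I]}), one finds
\[
Z_M^{\lambda}=Y_{[M]}+\lambda\,\Xi_M^{\lambda},
\]
where $\Xi_M^{\lambda}$ and its derivatives are bounded uniformly for $\lambda\le\lambda_0$ and $v$ in a fixed compact set, and — by the smooth dependence of all these objects on the base point — uniformly for $\overline{x}\in\Omega'$. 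Now take $\varphi\in C(\delta)$ joining $\overline{x}$ to $y$ with $\delta<R\le R_0$, write it in canonical coordinates as $u(t)$ and set $v(t)=\delta_\delta^{-1}(u(t))$; from $\varphi'=\sum_{|M|\le r}a_M(X_{[M]})_\varphi$, $|a_M|\le\delta^{|M|}$, one computes $v'(t)=\sum_{|M|\le r}b_M(t)\,\bigl(Z_M^{\delta}\bigr)_{v(t)}$ with $|b_M(t)|=\delta^{-|M|}|a_M(t)|\le1$.

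Thus $v$ solves an $O(\delta)$‑perturbation of the \emph{model} control system $w'=\sum_{|M|\le r}b_M(Y_{[M]})_w$, $w(0)=0$. The set reachable from $0$ in time $1$ by the model system with $\|b\|_\infty\le1$ is the closed unit ball of the Carnot–Carath\'{e}odory distance $d_Y$ attached to the $Y_{[M]}$'s; since the dilations leave the model invariant this distance is $\delta_\lambda$‑homogeneous, and since, being the Carnot–Carath\'{e}odory distance of a H\"{o}rmander system, it dominates the Euclidean distance near the origin (as in Proposition \ref{Prop fefferman phong}), standard homogeneity arguments show $d_Y(0,\cdot)$ is comparable to a homogeneous norm, so its unit ball is a bounded set $K_0$. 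By continuous dependence of solutions on the right‑hand side together with the usual continuation/bootstrap argument one then obtains $\lambda_0>0$ and a bounded set $K\supset\overline{K_0}$ such that for every $\delta\le\lambda_0$ and every admissible control $b$ the trajectory $v$ stays in $K$ on $[0,1]$; in particular $u(t)=\delta_\delta(v(t))$ stays within $O(\delta)$ of the origin, so for $R_0$ small $\varphi$ does remain in the canonical coordinate chart, which justifies the computation a posteriori. Consequently $|u_J(1)|=\delta^{|J|}|v_J(1)|\le C_0\delta^{|J|}$ with $C_0=\sup_{v\in K}\max_{J\in B}|v_J|$, and letting $\delta\downarrow d(\overline{x},y)$ yields $|u_J|\le C_0R^{|J|}$, i.e. $y\in Box(\overline{x},CR)$ with $C=\max(1,C_0)$, all constants uniform for $\overline{x}\in\Omega'$.

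Finally, (ii) follows by reading (i) in canonical coordinates: $B(\overline{x},R)$ then corresponds to a set squeezed between $Box(R)$ and $Box(CR)$, whose $u$‑Lebesgue measure therefore lies between $2^pR^Q$ and $2^pC^QR^Q$ with $Q=\sum_{I\in B}|I|$; pulling this back through the canonical chart $u\mapsto x$, whose Jacobian determinant equals $\det[(X_{[I]})_{\overline{x}}]_{I\in B}$ at $u=0$ and is hence bounded above and below by positive constants on $Box(CR_0)$ uniformly for $\overline{x}\in\Omega'$ (after shrinking $R_0$), gives $c_1R^Q\le|B(\overline{x},R)|\le c_2R^Q$. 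Statement (iii) is then immediate, $|B(\overline{x},2R)|\le c_2(2R)^Q=(c_22^Q/c_1)\,c_1R^Q\le(c_22^Q/c_1)|B(\overline{x},R)|$, after replacing $R_0$ by $R_0/2$. I expect the main obstacle to be precisely the uniform boundedness of the rescaled trajectories $v$ on all of $[0,1]$: a direct Gronwall estimate fails because the $Z_M^{\delta}$ are polynomial of degree $\ge2$ in $v$, and one genuinely has to exploit their convergence to the homogeneous model $Y_{[M]}$, whose unit‑time reachable set is bounded by homogeneity of $d_Y$; everything else is routine ODE theory and bookkeeping with the weight classes $F_s^q,V_s^q$.
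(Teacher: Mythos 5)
Your treatment of the easy inclusion $Box(\overline{x},R)\subseteq B(\overline{x},R)$, of (ii) via the boundedness of the Jacobian of the canonical chart, and of (iii) coincides with the paper's; but for the core inclusion $B(\overline{x},R)\subseteq Box(\overline{x},CR)$ you take a genuinely different route. The paper stays entirely in the unscaled canonical coordinates and never invokes the group structure: from Theorem \ref{Thm weight of X_[I]} it reads off $\left\vert X_{\left[I\right]}u_{J}\right\vert \leq c\left\Vert u\right\Vert^{\left\vert J\right\vert-\left\vert I\right\vert}$ for $\left\vert I\right\vert<\left\vert J\right\vert$, plugs this into the integral identity $\varphi(t)_{J}=\sum_{I}\int_{0}^{t}a_{I}(X_{\left[I\right]}u_{J})_{\varphi(\tau)}d\tau$, and closes the estimate for $\left\Vert \varphi\right\Vert^{r}$ by H\"older and Gronwall — a short, self-contained argument using only the weight theorem and elementary facts (as the paper's own Remark stresses). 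You instead rescale by the dilations $\delta_{\lambda}$, use Theorem \ref{Thm Approximation local} together with Theorem \ref{ilteoremabrutto} to write the rescaled fields as $Y_{\left[M\right]}+\lambda\Xi_{M}^{\lambda}$, and conclude by comparing the rescaled control system with the homogeneous model, whose unit-time reachable set is bounded by $\delta_{\lambda}$-homogeneity of $d_{Y}$, plus a trajectory-wise Gronwall/continuation argument against the model trajectory with the same control. This is correct in outline and the steps you leave as sketches are fillable: the uniform $O(\lambda)$ bound on $\Xi_{M}^{\lambda}$ follows from the weight estimates once one notes that when $1-\left\vert M\right\vert+\left\vert J\right\vert<0$ the weight of the coefficient is still $\geq0$ while the factor $\lambda^{\left\vert M\right\vert-\left\vert J\right\vert}$ already supplies at least $\lambda^{2}$; the boundedness of the model ball and the continuation argument are standard. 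What each approach buys: yours is conceptually transparent (the ball-box statement appears as a perturbation of the nilpotent model) and generalizes to other scaling arguments, but it logically depends on the full approximation machinery of \S 2.3--2.4; the paper's proof needs only Theorem \ref{Thm weight of X_[I]}, which is why it can be placed before (and independently of) the construction of the group $G$, and its uniformity in $\overline{x}\in\Omega^{\prime}$ comes for free from freeness of the system (the basis $B$ is fixed once and for all), whereas you must additionally track the smooth dependence on the base point through the approximation theorem.
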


\begin{proof}
(i) Let us show first that%
\begin{equation}
Box\left(  \overline{x},R\right)  \subseteq B\left(  \overline{x},R\right)  .
\label{Box c Ball}%
\end{equation}
For $x\in Box\left(  \overline{x},R\right)  ,$ let us write%
\begin{equation}
x=\exp\left(  \sum_{I\in B}u_{I}X_{\left[  I\right]  }\right)  \left(
\overline{x}\right)  \text{ with }\left\vert u_{I}\right\vert <R^{\left\vert
I\right\vert } \label{solito exp}%
\end{equation}
and set%
\[
\varphi\left(  t\right)  =\exp\left(  \sum_{I\in B}tu_{I}X_{\left[  I\right]
}\right)  \left(  \overline{x}\right)  .
\]
This $\varphi\left(  t\right)  $ defines an admissible curve belonging to
$C\left(  \delta\right)  $ for some $\delta<R$, that is, $d\left(
x,\overline{x}\right)  <R$ and inclusion (\ref{Box c Ball}) is proved.

To prove the reverse inclusion%
\[
B\left(  \overline{x},R\right)  \subseteq Box\left(  \overline{x},CR\right)
,
\]
we argue as follows. For $x\in B\left(  \overline{x},R\right)  ,$ let
$\varphi\left(  t\right)  $ be a curve in $C\left(  R\right)  $, that is,
\begin{align*}
\varphi^{\prime}\left(  t\right)   &  =\sum_{I\in B}a_{I}\left(  t\right)
\left(  X_{\left[  I\right]  }\right)  _{\varphi\left(  t\right)  }\\
\text{with }\varphi\left(  0\right)   &  =\overline{x},\varphi\left(
1\right)  =x,\left\vert a_{I}\left(  t\right)  \right\vert \leq R^{\left\vert
I\right\vert }.
\end{align*}
Then, for any smooth function $f\left(  x\right)  $ we have%
\[
f\left(  \varphi\left(  t\right)  \right)  -f\left(  \overline{x}\right)
=\int_{0}^{t}\frac{d}{dt}\left[  f\left(  \varphi\left(  \tau\right)  \right)
\right]  d\tau=\sum_{I\in B}\int_{0}^{t}a_{I}\left(  \tau\right)  \left(
X_{\left[  I\right]  }f\right)  _{\varphi\left(  \tau\right)  }d\tau.
\]
In particular, reasoning from now on in canonical coordinates, for $f\left(
u\right)  =u_{J}$ we get%
\begin{equation}
\varphi\left(  t\right)  _{J}=\sum_{I\in B}\int_{0}^{t}a_{I}\left(
\tau\right)  \left(  X_{\left[  I\right]  }u_{J}\right)  _{\varphi\left(
\tau\right)  }d\tau. \label{x_J}%
\end{equation}
From Theorem \ref{Thm weight of X_[I]} we read
\[
\left\vert \left(  X_{\left[  I\right]  }u_{J}\right)  _{\varphi\left(
\tau\right)  }\right\vert \leq c\left\Vert \varphi\left(  \tau\right)
\right\Vert ^{\left\vert J\right\vert -\left\vert I\right\vert }\text{ if
}\left\vert I\right\vert <\left\vert J\right\vert \ ,
\]
provided $x$ ranges in a compact set. Also, by definition of $\varphi$ we have%
\[
\left\vert a_{I}\left(  \tau\right)  \right\vert \leq R^{\left\vert
I\right\vert }\leq CR^{\left\vert J\right\vert }\text{ if }\left\vert
I\right\vert \geq\left\vert J\right\vert ,
\]
for any $R\leq R_{0},$ any fixed $R_{0},$ and some $C$ depending on $R_{0}$.
Therefore, (\ref{x_J}) gives%
\begin{align}
\left\vert \varphi\left(  t\right)  _{J}\right\vert _{k}  &  \leq C\sum_{I\in
B,\left\vert I\right\vert \leq k-1}\int_{0}^{t}R^{\left\vert I\right\vert
}\left\Vert \varphi\left(  \tau\right)  \right\Vert ^{\left\vert J\right\vert
-\left\vert I\right\vert }d\tau+\sum_{I\in B,\left\vert I\right\vert \geq
k}\int_{0}^{t}CR^{\left\vert J\right\vert }d\tau\nonumber\\
&  =C\left\{  \sum_{j=1}^{k-1}R^{j}\int_{0}^{t}\left\Vert \varphi\left(
\tau\right)  \right\Vert ^{k-j}d\tau+R^{k}\right\} \nonumber\\
&  \leq C\left\{  R\int_{0}^{t}\left\Vert \varphi\left(  \tau\right)
\right\Vert ^{k-1}d\tau+R^{k}\right\}  \label{(4)}%
\end{align}
where the last inequality holds because for $j=1,2,...,k-1$%
\[
R^{j}\left\Vert x\right\Vert ^{k-j}\leq\left\{
\begin{array}
[c]{l}%
R^{k}\text{ if }\left\Vert x\right\Vert \leq R\\
R\left\Vert x\right\Vert ^{k-1}\text{ if }\left\Vert x\right\Vert \geq R.
\end{array}
\right.
\]
Next, since%
\[
R\left\Vert \varphi\left(  \tau\right)  \right\Vert ^{k-1}\leq\left(
R+\left\Vert \varphi\left(  \tau\right)  \right\Vert \right)  ^{k}\leq
c\left(  R^{k}+\left\Vert \varphi\left(  \tau\right)  \right\Vert ^{k}\right)
,
\]
from (\ref{(4)}) we get%
\[
\left\vert \varphi\left(  t\right)  _{J}\right\vert _{k}\leq C\left\{
\int_{0}^{t}\left\Vert \varphi\left(  \tau\right)  \right\Vert ^{k}d\tau
+R^{k}\right\}  .
\]
Recalling the definition of $\left\Vert \cdot\right\Vert $ we then have%
\begin{align*}
\left\vert \varphi\left(  t\right)  _{J}\right\vert _{k}^{1/k}  &  \leq
C\left\{  \left(  \int_{0}^{t}\left\Vert \varphi\left(  \tau\right)
\right\Vert ^{k}d\tau\right)  ^{1/k}+R\right\} \\
\left\Vert \varphi\left(  \tau\right)  \right\Vert  &  \leq C\sum_{k=1}%
^{r}\left\{  \left(  \int_{0}^{t}\left\Vert \varphi\left(  \tau\right)
\right\Vert ^{k}d\tau\right)  ^{1/k}+R\right\} \\
\left\Vert \varphi\left(  \tau\right)  \right\Vert ^{r}  &  \leq C\sum
_{k=1}^{r}\left\{  \left(  \int_{0}^{t}\left\Vert \varphi\left(  \tau\right)
\right\Vert ^{k}d\tau\right)  ^{r/k}+R^{s}\right\}
\end{align*}
by H\"{o}lder inequality, since $r/k>1,$%
\[
\leq C\sum_{k=1}^{r}\left\{  \int_{0}^{t}\left\Vert \varphi\left(
\tau\right)  \right\Vert ^{r}d\tau+R^{r}\right\}  .
\]
Hence Gronwall's inequality implies%
\[
\left\Vert \varphi\left(  \tau\right)  \right\Vert ^{r}\leq CR^{r}\text{ for
any }\tau<t\leq1,
\]
which for $\tau=1$ gives%
\[
\left\Vert x\right\Vert \leq CR.
\]
that is $x\in Box\left(  CR\right)  $.

(ii). Let
\[
F\left(  u,x\right)  =\exp\left(  \sum_{I\in B}u_{I}X_{\left[  I\right]
}\right)  \left(  x\right)  ,\text{ for }\left(  u,x\right)  \in
U\times\overline{\Omega^{\prime}}%
\]
for some neighborhood $U$ of $0$ and let $J\left(  u,x\right)  $ be the
Jacobian determinant of the map $u\mapsto F\left(  u,x\right)  $. Since%
\[
J\left(  0,x\right)  =\det\left(  \left(  X_{\left[  I\right]  }\right)
_{x}\right)  _{I\in B}%
\]
by compactness $J\left(  u,x\right)  $ is bounded and bounded away from zero
in $U^{\prime}\times\overline{\Omega^{\prime}},$ for a suitable open subset
$U^{\prime}\subset U.$ Therefore,%
\begin{align*}
\left\vert B\left(  x,R\right)  \right\vert  &  \leq\left\vert Box\left(
\overline{x},CR\right)  \right\vert =\int_{Box\left(  \overline{x},CR\right)
}dy=\\
&  =\int_{\left\vert u_{I}\right\vert <\left(  CR\right)  ^{\left\vert
I\right\vert }}\left\vert J\left(  u,x\right)  \right\vert du\leq
c\int_{\left\vert u_{I}\right\vert <\left(  CR\right)  ^{\left\vert
I\right\vert }}du=cR^{Q},
\end{align*}
and analogously we establish the reverse inequality.

Finally, (iii) immediately follows from (ii).
\end{proof}

\begin{remark}
The above proof basically relies on Theorem \ref{Thm weight of X_[I]} and
elementary facts. However, we want also to stress the fact that the uniform
control on the constants is possible since the vector fields are free, so that
a basis can be chosen once and for all, independently from the point.
\end{remark}

\subsection{Equivalent quasidistances for free vector fields}

Let us consider again the free lifted vector fields ${X}_{0},{X}_{1}%
,...,{X}_{n},$ and the map $\Theta\left(  \xi,\eta\right)  ,$ defined for
$\xi,\eta$ belonging to a suitable neighborhood $W$ of a fixed point $\xi
_{0}.$ Recall that $u=\Theta\left(  \xi,\eta\right)  $ can be seen as an
element of the group $G.$ A point $u\in G$ is individuated by coordinates
$\left\{  u_{J}\right\}  _{J\in B}$. One can define%
\[
\rho\left(  \xi,\eta\right)  =\left\Vert \Theta\left(  \xi,\eta\right)
\right\Vert ,
\]
the Rothschild-Stein's \textit{quasidistance }induced by the ${X}_{i}$. It is
defined only locally and satisfies the properties collected in the next:

\begin{proposition}
For every $\xi_{0}\in\mathbb{R}^{p}$ there exist a neighborhood $W$ of
$\xi_{0}$ and constants $c,c_{1},c_{2}>0$ such that for any $\xi,\eta\in W,$%
\begin{align}
\rho\left(  \xi,\eta\right)   &  \geq0\nonumber\\
\rho\left(  \xi,\eta\right)   &  =0\Longleftrightarrow\xi=\eta\nonumber\\
\rho\left(  \xi,\eta\right)   &  =\rho\left(  \eta,\xi\right) \nonumber\\
\rho\left(  \xi,\eta\right)   &  \leq c\left\{  \rho\left(  \xi,\zeta\right)
+\rho\left(  \zeta,\eta\right)  \right\} \label{quasi triangle inequality}\\
c_{1}d\left(  \xi,\eta\right)   &  \leq\rho\left(  \xi,\eta\right)  \leq
c_{2}d\left(  \xi,\eta\right)  . \label{equiv d rho}%
\end{align}

\end{proposition}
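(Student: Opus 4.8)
The plan is to obtain the first three properties directly from the definitions, then to prove the equivalence (\ref{equiv d rho}) --- the only step with real content --- and finally to deduce the pseudo-triangle inequality (\ref{quasi triangle inequality}) formally from (\ref{equiv d rho}) and the fact, already established in Proposition \ref{Prop fefferman phong}, that $d$ is a genuine distance.

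For the elementary part I would note that $\left\Vert u\right\Vert =\sum_{k=1}^{s}\big(\sum_{\left\vert J\right\vert =k}\left\vert u_{J}\right\vert \big)^{1/k}$ is nonnegative, vanishes only at $u=0$, and is unchanged under $u\mapsto -u$. Hence $\rho\geq0$, and $\rho\left(  \xi,\eta\right)  =0$ iff $\Theta\left(  \xi,\eta\right)  =0$, which by the diffeomorphism property in Proposition \ref{Map Theta}(iii) happens iff $\eta=\xi$. The symmetry $\rho\left(  \xi,\eta\right)  =\rho\left(  \eta,\xi\right)  $ then follows from (\ref{meno Theta}), since $\Theta\left(  \xi,\eta\right)  =-\Theta\left(  \eta,\xi\right)  $ and $\left\Vert \cdot\right\Vert $ is even.

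For the lower bound $c_{1}d\leq\rho$ in (\ref{equiv d rho}) I would set $u=\Theta\left(  \xi,\eta\right)  $, so that $\eta=\exp\big(\sum_{I\in B}u_{I}X_{\left[  I\right]  }\big)\left(  \xi\right)  $, and test $d$ with the explicit curve $\varphi\left(  t\right)  =\exp\big(t\sum_{I\in B}u_{I}X_{\left[  I\right]  }\big)\left(  \xi\right)  $, $t\in\left[  0,1\right]  $: shrinking $W$ this curve stays in $\Omega$, joins $\xi$ to $\eta$, and satisfies $\varphi^{\prime}\left(  t\right)  =\sum_{I\in B}u_{I}\left(  X_{\left[  I\right]  }\right)  _{\varphi\left(  t\right)  }$, so it lies in the class $C\left(  \delta\right)  $ of Definition \ref{Definition CC distance} with $\delta=\max_{I\in B}\left\vert u_{I}\right\vert ^{1/\left\vert I\right\vert }$. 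Since $\left\vert u_{I}\right\vert ^{1/\left\vert I\right\vert }\leq\left\Vert u\right\Vert $, this gives $d\left(  \xi,\eta\right)  \leq\left\Vert u\right\Vert =\rho\left(  \xi,\eta\right)  $, i.e.\ one may take $c_{1}=1$. For the upper bound $\rho\leq c_{2}d$ I would use the ball-box theorem (Theorem \ref{Thm ball-box free}(i)) in a fixed $\Omega^{\prime}\Subset\Omega$ containing $\xi_{0}$: for $R>d\left(  \xi,\eta\right)  $ small one has $\eta\in B\left(  \xi,R\right)  \subseteq Box\left(  \xi,CR\right)  $, and expressing $Box\left(  \xi,CR\right)  $ in the canonical coordinates centered at $\xi$, in which a point $\eta$ has coordinates $\Theta\left(  \xi,\eta\right)  $, yields $\left\vert \Theta\left(  \xi,\eta\right)  _{I}\right\vert <\left(  CR\right)  ^{\left\vert I\right\vert }$ for every $I\in B$, whence $\left\Vert \Theta\left(  \xi,\eta\right)  \right\Vert \leq c\,R$. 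Letting $R\downarrow d\left(  \xi,\eta\right)  $ gives $\rho\left(  \xi,\eta\right)  \leq c_{2}\,d\left(  \xi,\eta\right)  $. (Along the way one should check that the distance induced by $\left\{  X_{\left[  I\right]  }\right\}  _{I\in B}$ coincides, up to constants, with the $d$ of Definition \ref{Definition CC distance}, which holds because freeness makes every $X_{\left[  I\right]  }$ with $\left\vert I\right\vert \leq s$ a bounded combination of the $X_{\left[  J\right]  }$, $J\in B$, with $\left\vert J\right\vert =\left\vert I\right\vert $.)

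Finally, shrinking $W$ so that (\ref{equiv d rho}) holds throughout it and using the triangle inequality for $d$,
\[
\rho\left(  \xi,\eta\right)  \leq c_{2}\,d\left(  \xi,\eta\right)  \leq c_{2}\big(d\left(  \xi,\zeta\right)  +d\left(  \zeta,\eta\right)  \big)\leq\frac{c_{2}}{c_{1}}\big(\rho\left(  \xi,\zeta\right)  +\rho\left(  \zeta,\eta\right)  \big),
\]
which is (\ref{quasi triangle inequality}). I expect the upper bound in (\ref{equiv d rho}) to be the only genuine step: that is exactly where the weight estimates of Theorem \ref{Thm weight of X_[I]}, routed through the ball-box theorem, are used essentially, while everything else is bookkeeping.
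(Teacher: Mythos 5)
Your proposal is correct and takes essentially the same route as the paper: the first three properties come from the definition of $\Theta$ and of $\left\Vert \cdot\right\Vert$, the quasi-triangle inequality (\ref{quasi triangle inequality}) is deduced from (\ref{equiv d rho}) together with the triangle inequality for $d$, and (\ref{equiv d rho}) itself is obtained by identifying $\rho$-balls with boxes and invoking the ball-box theorem (Theorem \ref{Thm ball-box free}). The only differences are cosmetic: you re-derive the inclusion $Box\left(\xi,R\right)\subseteq B\left(\xi,R\right)$ directly via the explicit curve $\exp\left(t\sum_{I\in B}u_{I}X_{\left[I\right]}\right)\left(\xi\right)$ instead of citing the ball-box theorem for both halves, and you add a sensible remark reconciling the distance of Definition \ref{Definition CC distance} with the one built only from $\left\{X_{\left[I\right]}\right\}_{I\in B}$, a point the paper leaves implicit.
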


\begin{proof}
The first three properties follow by definition and by Proposition
\ref{Map Theta}, while (\ref{quasi triangle inequality}) follows by
(\ref{equiv d rho}), since $d$ satisfies the triangle inequality. So let us
prove (\ref{equiv d rho}).

Let us denote by $B_{\rho}\left(  \xi,R\right)  $ the \textquotedblleft
balls\textquotedblright\ with respect to $\rho$. Note that, just by definition
of box and $\Theta,$ we have
\[
\eta\in Box\left(  \xi,R\right)  \Longleftrightarrow\left\Vert \Theta\left(
\xi,\eta\right)  \right\Vert \leq R,
\]
which implies the inclusions%
\[
B_{\rho}\left(  \xi,c_{1}R\right)  \subset Box\left(  \xi,R\right)  \subset
B_{\rho}\left(  \xi,c_{2}R\right)
\]
for any $\xi\in W$, $R\leq R_{0}$, and some positive constants $R_{0}%
,c_{1},c_{2}.$ Therefore, Theorem \ref{Thm ball-box free} implies
(\ref{equiv d rho}).
\end{proof}

We also have the following

\begin{proposition}
The change of coordinate in $\mathbb{R}^{p}$ given by%
\[
\xi\mapsto u=\Theta\left(  \xi,\eta\right)
\]
has a Jacobian determinant given by%
\[
d\xi=c\left(  \eta\right)  \left(  1+O\left(  \left\Vert u\right\Vert \right)
\right)  du
\]
where $c\left(  \eta\right)  $ is a smooth function, bounded and bounded away
from zero.
\end{proposition}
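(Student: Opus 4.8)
The plan is to deduce the statement from the joint smoothness of the exponential map underlying $\Theta$ (already recorded in the construction preceding Proposition \ref{Map Theta}), together with the trivial estimate $|u_{I}|\leq\|u\|^{|I|}$. First I would note that, by (\ref{meno Theta}) and Proposition \ref{Map Theta}(i), the map $\xi\mapsto u=\Theta(\xi,\eta)$ is the inverse of
\[
u\longmapsto\xi=\exp\Bigl(-\sum_{I\in B}u_{I}X_{[I]}\Bigr)(\eta);
\]
since $u\mapsto-u$ changes neither $\|u\|$ nor, up to sign, the Jacobian, it suffices to analyse the Jacobian determinant $J(u,\eta)=\det\bigl(\partial_{u}E(u,\eta)\bigr)$ of
\[
E(\cdot,\eta)\colon\ u\longmapsto\xi=\exp\Bigl(\sum_{I\in B}u_{I}X_{[I]}\Bigr)(\eta).
\]
By the construction recalled before Proposition \ref{Map Theta}, $E$ is smooth in the joint variables $(u,\eta)$ on a set $U(0)\times\Omega'$, hence so is $J(u,\eta)$.

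Next I would compute the value at $u=0$. As was already observed when canonical coordinates were introduced, $\partial E(u,\eta)/\partial u_{I}$ at $u=0$ equals $(X_{[I]})_{\eta}$, so
\[
J(0,\eta)=\det\bigl((X_{[I]})_{\eta}\bigr)_{I\in B}=:c(\eta).
\]
Because the $X_{[I]}$ are smooth and $\{X_{[I]}\}_{I\in B}$ is a basis of $\mathbb{R}^{p}$ at every point, $c$ is smooth and, on the closure of the (relatively compact) neighbourhood $W$ in question, continuous and nowhere vanishing, hence bounded and bounded away from $0$; replacing $c$ by $|c|$ if necessary we may take $c>0$. Writing $J(u,\eta)=c(\eta)(1+g(u,\eta))$, the function $g$ is smooth with $g(0,\eta)=0$, so Taylor's formula with integral remainder, applied uniformly on a compact neighbourhood of $\{0\}\times\overline{W}$, gives $|g(u,\eta)|\leq C\sum_{I\in B}|u_{I}|$.

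Finally I would pass to the homogeneous scale: since $|u_{I}|\leq|u|_{|I|}\leq\|u\|^{|I|}\leq\|u\|$ as soon as $\|u\|\leq1$, shrinking $W$ yields $|g(u,\eta)|\leq C'\|u\|$, that is $J(u,\eta)=c(\eta)\bigl(1+O(\|u\|)\bigr)$; combined with the reduction made at the outset (and $\|-u\|=\|u\|$) this is the asserted formula $d\xi=c(\eta)(1+O(\|u\|))\,du$. I do not expect a genuine obstacle here, since the substantive input — joint smoothness of $E$ and the fact that $\{X_{[I]}\}_{I\in B}$ is a continuously varying basis — is already available; the only points needing a little care are the uniformity in $\eta$ of the remainder bound and the (elementary) passage from the Euclidean modulus $|u|$ to the homogeneous quantity $\|u\|$.
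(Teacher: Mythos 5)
Your argument is correct, and it is a genuinely more direct route than the one the paper takes. The paper does not prove this smooth proposition in place: it refers to \cite{RS} and \cite[Thm. 1.7]{BB2} and observes that the statement is a special case of Proposition \ref{Prop nonsmooth change}, whose proof identifies the value of the Jacobian at the center by applying the approximation identity (\ref{approx}) to the coordinate functions $f(u)=u_{J}$, using $Y_{\left[I\right]}u_{J}(0)=\delta_{IJ}$ and the weight estimate on the remainder fields to obtain $\mathrm{Det}\left[C(y)\right]\cdot J(y)=1$, and only then invokes smoothness in $u$ for the $O\left(\left\Vert u\right\Vert\right)$ factor. You instead bypass the approximation theorem entirely: after reducing, via $\Theta\left(\xi,\eta\right)=-\Theta\left(\eta,\xi\right)$, to the Jacobian of $E\left(\cdot,\eta\right)$, you read off $J(0,\eta)=\det\left(\left(X_{\left[I\right]}\right)_{\eta}\right)_{I\in B}$ directly from the variational equation for the exponential map (a fact the paper itself records when introducing canonical coordinates), and then use the joint smoothness of $E$ in $\left(u,\eta\right)$, Taylor with integral remainder uniformly on compacts, and the elementary inequality $\left\vert u_{I}\right\vert\leq\left\Vert u\right\Vert^{\left\vert I\right\vert}\leq\left\Vert u\right\Vert$ for $\left\Vert u\right\Vert\leq1$. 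Your handling of the inversion and of the sign (taking $\left\vert c\right\vert$, noting $\left\Vert -u\right\Vert=\left\Vert u\right\Vert$) is correct, and the uniformity in $\eta$ is adequately addressed by the locality of the statement. What the two approaches buy: yours is self-contained and elementary, needing only ODE smoothness and the nondegeneracy of the basis $\left\{X_{\left[I\right]}\right\}_{I\in B}$, which is exactly the right level of machinery in the smooth free case; the paper's detour through the approximation formula is the formulation that transfers verbatim to the nonsmooth setting of Section \ref{section nonsmooth theta}, where dependence on the base point is only $C^{\alpha}$ and the remainder fields must be tracked explicitly, and it makes transparent the relation between the coefficient matrix of the $X_{\left[I\right]}$'s and the Jacobian of $\Theta$.
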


The above proposition is proved in \cite{RS}; see also \cite[Thm. 1.7]{BB2}.
Moreover, this proposition is a particular case of the analogous property
which we will prove for nonsmooth vector fields in subsection
\ref{section nonsmooth theta}, Proposition \ref{Prop nonsmooth change}.

\section{Approximation for nonsmooth H\"{o}rmander's vector
fields\label{Section nonsmooth}}

Here we want to prove also for nonsmooth vector fields the approximation
theorem and the basic results about the map $\Theta_{\eta}\left(
\cdot\right)  $. This is made possible combining the previous theory for
smooth vector fields with a natural procedure of approximation of nonsmooth
vector fields by their Taylor expansion. To quantify the weight of the
\textquotedblleft remainders\textquotedblright\ in the approximation formula,
we have to assume the coefficients of the vector fields in the scale of
H\"{o}lder spaces, slightly strengthening the assumptions made in
\S \ref{subsection lifting}.

\subsection{Assumptions}

\textbf{Assumptions (B). }We assume that for some integer $r\geq2,$ some
$\alpha\in(0,1]$ and some bounded domain $\Omega\subset\mathbb{R}^{p}$ the
following hold:

\begin{itemize}
\item[(B1)] The coefficients of the vector fields $X_{1},X_{2},...,X_{n}$
belong to $C^{r-1\,,\alpha}\left(  \Omega\right)  ,$ while the coefficients of
$X_{0}$ belong to $C^{r-2,\alpha\,}\left(  \Omega\right)  .$ Here and in the
following, $C^{k,\alpha}$ stands for the classical space of functions with
derivatives up to order $k$, H\"{o}lder continuous of exponent $\alpha.$

\item[(B2)] The vectors $\left\{  \left(  X_{\left[  I\right]  }\right)
_{x}\right\}  _{\left\vert I\right\vert \leq r}$ span $\mathbb{R}^{p}$ at
every point $x\in\Omega$.
\end{itemize}

The following easy lemma is proved analogously to that in
\S \ref{subsec assumptions}.

\begin{lemma}
Under the assumption (B1) above, for any $1\leq k\leq r,$ the differential
operators%
\[
\left\{  X_{I}\right\}  _{\left\vert I\right\vert \leq k}%
\]
are well defined, and have $C^{r-k,\alpha}$ coefficients. The same is true for
the vector fields $\left\{  X_{\left[  I\right]  }\right\}  _{\left\vert
I\right\vert \leq k}.$
\end{lemma}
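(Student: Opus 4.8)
The plan is to follow, essentially verbatim, the proof of the lemma in \S\ref{subsec assumptions}, reading the H\"{o}lder space $C^{h,\alpha}$ in place of $C^{h}$ at every occurrence. The only additional ingredients are two elementary stability properties of H\"{o}lder spaces on a bounded domain: the product of a function in $C^{a,\alpha}$ with one in $C^{b,\alpha}$ lies in $C^{\min(a,b),\alpha}$, and $\partial_{x_j}$ maps $C^{a,\alpha}$ into $C^{a-1,\alpha}$ for $a\ge 1$. Granting these, a first-order operator with coefficients in $C^{c,\alpha}$ sends a function with coefficients in $C^{a,\alpha}$ to one with coefficients in $C^{\min(c,\,a-1),\alpha}$, and this is the mechanism driving the whole estimate.

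First I would prove, by induction on $k$, that the differential operator $X_I$ has coefficients in $C^{r-|I|,\alpha}$ whenever $|I|\le k$; this immediately yields the stated $C^{r-k,\alpha}$, since $r-|I|\ge r-k$. For $k=1$ the multiindices of weight $\le 1$ are the $I=(i)$ with $i\ge 1$, and $X_i$ has coefficients in $C^{r-1,\alpha}$ by (B1). For the inductive step write $I=(i_1,I')$ with $|I|=k$, so that $X_I=X_{i_1}X_{I'}$ and $|I'|=k-p_{i_1}\ge 0$ (with $X_\emptyset$ understood as the identity when $|I'|=0$). By (B1) the coefficients of $X_{i_1}$ lie in $C^{r-p_{i_1},\alpha}$, and by the inductive hypothesis those of $X_{I'}$ lie in $C^{r-k+p_{i_1},\alpha}$; hence the coefficients of $X_I$ lie in $C^{h,\alpha}$ with $h=\min(r-p_{i_1},\,r-k+p_{i_1}-1)$. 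Since $1\le p_{i_1}\le k$ (the bound $p_{i_1}\le k$ being forced by $p_{i_1}+|I'|=k$), both arguments of the minimum are $\ge r-k$, so $h\ge r-k=r-|I|$, exactly as in the smooth case.

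Then I would transfer the result to the vector fields $X_{[I]}$. Expanding the iterated bracket $X_{[I]}=\operatorname{ad}X_{i_1}\cdots\operatorname{ad}X_{i_{m-1}}X_{i_m}$ expresses it, as a differential operator, as a linear combination with absolute constant coefficients of products $X_{j_1}\cdots X_{j_m}$ in which the indices of $I$ are merely permuted; each such product is an $X_J$ with $|J|=|I|$, so by the first part its coefficients lie in $C^{r-|I|,\alpha}$. Since the second-order terms in the bracket cancel, $X_{[I]}$ is in fact a vector field, and its (first-order) coefficients therefore lie in $C^{r-|I|,\alpha}\subseteq C^{r-k,\alpha}$.

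I do not anticipate a genuine obstacle: the proof is a transcription of the earlier one, and the only points needing a moment's care are clerical --- checking that $r-p_{i_1}$ and $r-k+p_{i_1}-1$ both dominate $r-k$ uniformly over $p_{i_1}\in\{1,2\}$ (immediate from $1\le p_{i_1}\le k$), and noting that the intermediate exponents stay nonnegative (from $r\ge 2$ and $|I|\le r$), so that the H\"{o}lder spaces involved are meaningful.
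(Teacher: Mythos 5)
Your proposal is correct and is essentially the paper's own argument: the paper proves this lemma simply by noting it is "proved analogously" to the smooth-case lemma of \S 1.1, i.e.\ by the same induction on $k$ with $X_I=X_{i_1}X_{I'}$ and the exponent count $h=\min\left(r-p_{i_1},\,r-k+p_{i_1}-1\right)\geq r-k$, read in the H\"older scale $C^{h,\alpha}$ exactly as you do. Your added remarks on the stability of H\"older spaces under products and differentiation, and on passing to the brackets $X_{[I]}$ via the expansion into operators $X_J$ with $\left\vert J\right\vert =\left\vert I\right\vert$, are the details the paper leaves implicit.
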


\textbf{Dependence of the constants.} We will often write that some constant
depends on the vector fields $X_{i}$'s and some fixed domain $\Omega^{\prime
}\Subset\Omega$. (Actually, the dependence on the $X_{i}$'s will be usually
left understood). Explicitly, this will mean that the constant depends on:

(i) $\Omega^{\prime}$;

(ii) the norms $C^{r-1,\alpha}\left(  \Omega\right)  $ of the coefficients of
$X_{i}$ $\left(  i=1,2,...,n\right)  $ and the norms $C^{r-2,\alpha}\left(
\Omega\right)  $ of the coefficients of $X_{0}$;

(iii) a positive constant $c_{0}$ such that the following bound holds:%
\[
\inf_{x\in\Omega^{\prime}}\max_{\left\vert I_{1}\right\vert ,\left\vert
I_{2}\right\vert ,...,\left\vert I_{p}\right\vert \leq r}\left\vert
\det\left(  \left(  X_{\left[  I_{1}\right]  }\right)  _{x},\left(  X_{\left[
I_{2}\right]  }\right)  _{x},...,\left(  X_{\left[  I_{p}\right]  }\right)
_{x}\right)  \right\vert \geq c_{0}%
\]
(where \textquotedblleft$\det$\textquotedblright\ denotes the determinant of
the $p\times p$ matrix having the vectors $\left(  X_{\left[  I_{i}\right]
}\right)  _{x}$ as rows).

Note that (iii) is a quantitative way of assuring the validity of
H\"{o}rmander's condition, uniformly in $\Omega^{\prime}$.

As we have seen in \S \ref{subsection lifting}, we can always lift our
nonsmooth vector fields to get a system of free vector fields, still
satisfying assumptions (B). In this section we are interested in proving a
Rothschild-Stein type approximation result for these lifted vector fields.
Therefore, just to simplify notation, throughout this section we will assume
that our vector fields $X_{i}$'s are already free up to weight $r$. For the
same reason, we will use variables $x,y$ instead of $\xi,\eta$.

\subsection{Regularized canonical
coordinates\label{subsec Regularized canonical coordinates}}

As we have seen in \S \ref{subsection canonical coordinates}, of basic
importance in the study of the properties of the vector fields is expressing
them in terms of \textit{canonical coordinates}. This means to introduce the
local diffeomorphism
\[
\mathbb{R}^{p}\backepsilon u\longmapsto x=\exp\left(  \sum_{I\in B}%
u_{I}X_{\left[  I\right]  }\right)  \left(  \overline{x}\right)  \in U\left(
\overline{x}\right)
\]
and then express the vector fields $X_{\left[  I\right]  }^{x}$ as
differential operators $X_{\left[  I\right]  }^{u}$ acting on the variable
$u$. However, under our assumptions, we cannot expect this map being more
regular than H\"{o}lder continuous; even if we strengthened our assumptions
asking the coefficients of $X_{i}$ to be $C^{r-p_{i}+1,\alpha}$, we would get
a $C^{1,\alpha}$ local diffeomorphism, which would transform the vector fields
$X_{i}^{x}$ in $C^{\alpha}$ vector fields $X_{i}^{u}$. Therefore it would be
impossible to compute the commutators of the transformed vector fields, and
all the arguments of \S \ref{subsection canonical coordinates} would break
down. More precisely, following this line we would be forced to require the
coefficients of $X_{i}$ to be $C^{2r,\alpha},$ which is rather unsatisfactory.
This discussion leads us to look for a smooth diffeomorphism, adapted to the
system $\left\{  X_{\left[  I\right]  }\left(  x\right)  \right\}
_{\left\vert I\right\vert \leq r},$ transforming the vector fields $X_{\left[
I\right]  }$ in vector fields $X_{\left[  I\right]  }^{u}$ having the same
regularity, and better properties. This leads to the concept of
\textit{regularized canonical coordinates}, firstly introduced in \cite{C} and
then used by several authors in particular cases.

Fix a point $\overline{x}\in\Omega;$ for any $i=0,1,2,...,n$, let us consider
the vector field
\[
X_{i}=\sum_{j=1}^{p}b_{ij}\left(  x\right)  \partial_{x_{j}};
\]
let $p_{ij}^{r}\left(  x\right)  $ be the Taylor polynomial of $b_{ij}\left(
x\right)  $ of center $\overline{x}$ and order $r-p_{i}$; note that%
\begin{equation}
b_{ij}\left(  x\right)  =p_{ij}^{r}\left(  x\right)  +O\left(  \left\vert
x-\overline{x}\right\vert ^{r-p_{i}+\alpha}\right)  ;
\label{Holder approximation}%
\end{equation}
set%
\[
S_{i}^{\overline{x}}=\sum_{j=1}^{p}p_{ij}^{r}\left(  x\right)  \partial
_{x_{j}}.
\]
From (\ref{Holder approximation}) we easily have (see, e.g., \cite{BBP}):

\begin{proposition}
\label{Proposition S_i}The $S_{i}^{\overline{x}}$ $\left(
i=0,1,2,...,n\right)  $ are smooth vector fields defined in the whole space,
satisfying:%
\[
\left(  S_{I}^{\overline{x}}\right)  _{\overline{x}}=\left(  X_{I}\right)
_{\overline{x}}\text{ and }\left(  S_{\left[  I\right]  }^{\overline{x}%
}\right)  _{\overline{x}}=\left(  X_{\left[  I\right]  }\right)
_{\overline{x}}\text{ for any }I\text{ with }\left\vert I\right\vert \leq
r\text{.}%
\]
Moreover,%
\[
X_{\left[  I\right]  }-S_{\left[  I\right]  }^{\overline{x}}=\sum_{j=1}%
^{p}c_{I}^{j}\left(  x\right)  \partial_{x_{j}}\text{ with }c_{I}^{j}\left(
x\right)  =O\left(  \left\vert x-\overline{x}\right\vert ^{r-\left\vert
I\right\vert +\alpha}\right)  .
\]
Finally, denoting by $d_{X}$ and $d_{S^{\overline{x}}}$ the distances (see
\S \ \ref{sec ballbox}) induced by the $X_{i}$'s and the $S_{i}^{\overline{x}%
}$'s, respectively, and by $B_{X}$ and $B_{S^{\overline{x}}}$ the
corresponding metric balls, there exist positive constants $c_{1},c_{2},R_{0}$
depending on $\Omega,\Omega^{\prime}$ and the $X_{i}$'s$,$ but not on
$\overline{x}\in\Omega^{\prime}$, such that%
\[
B_{S^{\overline{x}}}\left(  \overline{x},c_{1}R\right)  \subset B_{X}\left(
\overline{x},R\right)  \subset B_{S^{\overline{x}}}\left(  \overline{x}%
,c_{2}R\right)
\]
for any $R<R_{0}.$
\end{proposition}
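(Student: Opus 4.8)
The plan is to deal with the four assertions of Proposition~\ref{Proposition S_i} separately. Smoothness of $S_i^{\overline{x}}$ is immediate since its coefficients are polynomials. For the agreement at $\overline{x}$, I would expand the differential operator $X_I=X_{i_1}\cdots X_{i_k}$ with $I=(i_1,\dots,i_k)$, $\left\vert I\right\vert\le r$, and observe that in each resulting term the coefficient $b_{i_m j}$ of $X_{i_m}$ can be differentiated only by the $m-1$ first order operators $X_{i_1},\dots,X_{i_{m-1}}$ standing to its left, hence enters only through its derivatives of order $\le m-1$. Since $m-1\le\sum_{\ell\neq m}p_{i_\ell}=\left\vert I\right\vert-p_{i_m}\le r-p_{i_m}$, and $p_{i_m j}^{r}$ is by construction the Taylor polynomial of $b_{i_m j}$ of order $r-p_{i_m}$, all the coefficient-derivatives that occur agree at $\overline{x}$ with those of $b_{i_m j}$; therefore $X_I f(\overline{x})=S_I^{\overline{x}}f(\overline{x})$ for every smooth $f$, i.e. $(X_I)_{\overline{x}}=(S_I^{\overline{x}})_{\overline{x}}$, and since $X_{[I]}=\sum_{\left\vert J\right\vert=\left\vert I\right\vert}A_{IJ}X_J$ holds with the same universal constants for the $S^{\overline{x}}$'s, also $(X_{[I]})_{\overline{x}}=(S_{[I]}^{\overline{x}})_{\overline{x}}$.

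For the quantitative estimate I would argue by induction on $\ell(I)$. For $\ell(I)=1$ the claim is exactly (\ref{Holder approximation}), namely Taylor's theorem with H\"older remainder. For the inductive step write $I=(i,J)$ with $\ell(J)=\ell(I)-1$, put $E_i=X_i-S_i^{\overline{x}}$, $E_J=X_{[J]}-S_{[J]}^{\overline{x}}$, and use the bilinearity of the bracket to get
\[
X_{[I]}-S_{[I]}^{\overline{x}}=[E_i,S_{[J]}^{\overline{x}}]+[S_i^{\overline{x}},E_J]+[E_i,E_J].
\]
The coefficients of $E_i$ lie in $C^{r-p_i,\alpha}$ and are $O(\left\vert x-\overline{x}\right\vert^{r-p_i+\alpha})$; those of $E_J$ lie in $C^{r-\left\vert J\right\vert,\alpha}$ (difference of a coefficient of $X_{[J]}$ and a polynomial) and are $O(\left\vert x-\overline{x}\right\vert^{r-\left\vert J\right\vert+\alpha})$ by the inductive hypothesis; note $r-p_i\ge1$ and $r-\left\vert J\right\vert\ge1$ because $1\le\left\vert J\right\vert$ and $1\le p_i$, so both families are differentiable. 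A $C^{m,\alpha}$ function that is $O(\left\vert x-\overline{x}\right\vert^{m+\alpha})$ has vanishing Taylor polynomial of degree $\le m$ at $\overline{x}$, hence its first derivatives are $O(\left\vert x-\overline{x}\right\vert^{m-1+\alpha})$; using this, and the fact that every coefficient of the three brackets above is of the form (coefficient)$\times$(derivative of coefficient) or (derivative of coefficient)$\times$(coefficient), one checks, using only $p_i\ge1$, that each such term is $O(\left\vert x-\overline{x}\right\vert^{r-\left\vert I\right\vert+\alpha})$. This closes the induction.

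For the ball inclusions I would first record two uniform facts. By the agreement at $\overline{x}$ just proved together with assumption (iii), and because the coefficients of $S_i^{\overline{x}}$ are polynomials of bounded degree whose coefficients are the Taylor coefficients of the $b_{ij}$ at $\overline{x}$ (hence bounded uniformly for $\overline{x}\in\Omega^{\prime}$), the vectors $\{(S^{\overline{x}}_{[I]})_y\}_{\left\vert I\right\vert\le r}$ span $\mathbb{R}^p$ with a determinant bounded below uniformly in $\overline{x}\in\Omega^{\prime}$ and in $y$ in a fixed Euclidean neighbourhood of $\overline{x}$; in particular $\{(S^{\overline{x}}_{[J]})_y\}_{J\in B}$ and $\{(X_{[J]})_y\}_{J\in B}$ are uniformly invertible bases there. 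Secondly, since all these vector fields have coefficients bounded near $\overline{x}$, the easy estimate $d_{S^{\overline{x}}}(x,\overline{x})\ge c\left\vert x-\overline{x}\right\vert$ and $d_X(x,\overline{x})\ge c\left\vert x-\overline{x}\right\vert$ holds with $c$ uniform. Now let $x\in B_X(\overline{x},R)$ with $R\le R_0$ and let $\varphi\in C(R)$ join $\overline{x}$ to $x$ for $d_X$; restricting and rescaling $\varphi$ gives $d_X(\varphi(t),\overline{x})\le R$, hence $\left\vert\varphi(t)-\overline{x}\right\vert\le CR$. Writing $X_{[I]}=S^{\overline{x}}_{[I]}+E_{[I]}$, the error $\sum_{\left\vert I\right\vert\le s}a_I(t)(E_{[I]})_{\varphi(t)}$ has norm $\le C\sum_{\left\vert I\right\vert\le s}R^{\left\vert I\right\vert}(CR)^{r-\left\vert I\right\vert+\alpha}\le C^{\prime}R^{r+\alpha}$; re-expanding it in the basis $\{(S^{\overline{x}}_{[J]})_{\varphi(t)}\}_{J\in B}$ (bounded inverse matrix) adds to the $J$-th coefficient a quantity $\le C^{\prime\prime}R^{r+\alpha}\le C^{\prime\prime}R^{\left\vert J\right\vert}$, using $\left\vert J\right\vert\le r$ and $R\le1$. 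Thus $\varphi\in C(CR)$ for $d_{S^{\overline{x}}}$, i.e. $x\in B_{S^{\overline{x}}}(\overline{x},CR)$; the reverse inclusion follows by the symmetric argument with the roles of $X$ and $S^{\overline{x}}$ exchanged, using the uniform lower bound on $d_{S^{\overline{x}}}$.

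The step I expect to be the main obstacle is precisely this last comparison of distances: one must organize the induction of the previous paragraph and the choice of $C$, $R_0$ and of the neighbourhood on which the $S^{\overline{x}}_i$ span so that everything depends only on $\Omega,\Omega^{\prime}$ and on the quantities (i)--(iii), and not on the base point $\overline{x}$ — the remaining computations being routine applications of Gronwall-type bounds and of Proposition~\ref{Prop fefferman phong}.
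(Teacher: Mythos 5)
Your proposal is correct; note, however, that the paper itself contains no proof of Proposition \ref{Proposition S_i} — it only remarks that the statement follows from (\ref{Holder approximation}) and refers to \cite{BBP} — so there is no in-paper argument to compare against, and your write-up essentially supplies the details the authors leave implicit. The three ingredients you use are the natural ones and they do work: (a) the derivative count showing that in the expansion of $X_I=X_{i_1}\cdots X_{i_k}$ each coefficient $b_{i_m j}$ is differentiated at most $m-1\le \vert I\vert-p_{i_m}\le r-p_{i_m}$ times, so replacing it by its Taylor polynomial of order $r-p_{i_m}$ does not change the value at $\overline{x}$, whence $(X_I)_{\overline{x}}=(S_I^{\overline{x}})_{\overline{x}}$, with the commutator version following from $X_{[I]}=\sum_J A_{IJ}X_J$ (or directly from the quantitative bound, since $r-\vert I\vert+\alpha>0$); (b) the induction on $\ell(I)$ via $X_{[I]}-S_{[I]}^{\overline{x}}=[E_i,S_{[J]}^{\overline{x}}]+[S_i^{\overline{x}},E_J]+[E_i,E_J]$, which closes because a $C^{m,\alpha}$ function that is $O(\vert x-\overline{x}\vert^{m+\alpha})$ has identically vanishing degree-$m$ Taylor polynomial and hence first derivatives $O(\vert x-\overline{x}\vert^{m-1+\alpha})$, and because $\vert I\vert\le r$ gives $r-p_i\ge\vert J\vert\ge1$ and $r-\vert J\vert\ge p_i\ge1$; (c) the perturbation of admissible curves, where the error along the curve is $O(R^{r+\alpha})$, hence $O(R^{\vert I\vert})$ for $R\le1$, and is reabsorbed using a uniformly bounded inverse of a spanning family of commutators. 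The only imprecision is in (c): the quantitative Hörmander assumption (iii) bounds from below the best determinant over all $p$-tuples of commutators of weight $\le r$, not the determinant of the particular family indexed by $B$, so the claimed uniform invertibility of $\{(S^{\overline{x}}_{[J]})_y\}_{J\in B}$ is not immediate; this is harmless, since the admissible curves of Definition \ref{Definition CC distance} carry coefficients on all $X_{[I]}$ with $\vert I\vert\le r$, so you can re-expand the error on whichever $p$-tuple realizes the maximum in (iii) near $\overline{x}$ and use Cramer's rule for the uniform coefficient bound. With that adjustment all constants depend only on $\Omega$, $\Omega'$, the Hölder norms of the coefficients and $c_0$, uniformly in $\overline{x}\in\Omega'$, exactly as required; the comparison with $d_X$ via Proposition \ref{Prop fefferman phong} is then routine, as you say.
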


Now, fix a point $\overline{x}\in\Omega,$ and select a basis of $\mathbb{R}%
^{p}$ of the form%
\[
\left\{  \left(  X_{\left[  I\right]  }\right)  _{\overline{x}}\right\}
_{I\in B}.
\]
Clearly, we have
\[
\left\{  \left(  X_{\left[  I\right]  }\right)  _{\overline{x}}\right\}
_{I\in B}=\left\{  \left(  S_{\left[  I\right]  }\right)  _{\overline{x}%
}\right\}  _{I\in B}.
\]
We can now introduce, as in \S \ref{subsection canonical coordinates}, the
canonical coordinates induced by the smooth vector fields $S_{i}^{\overline
{x}}$; these will be, by definition, the regularized canonical coordinates
induced by the $X_{i}$'s:%
\begin{equation}
\mathbb{R}^{p}\backepsilon u\longmapsto x=\exp\left(  \sum_{I\in B}%
u_{I}S_{\left[  I\right]  }^{\overline{x}}\right)  \left(  \overline
{x}\right)  \label{diffeomorphism}%
\end{equation}
for $x$ belonging to some neighborhood $U\left(  \overline{x}\right)  .$ Note
that the Jacobian of the map $u\longmapsto x,$ at $u=0,$ equals the matrix of
the vector fields $\left\{  \left(  S^{\overline{x}}_{\left[  I\right]
}\right)  _{\overline{x}}\right\}  _{I\in B}=\left\{  \left(  X_{\left[
I\right]  }\right)  _{\overline{x}}\right\}  _{I\in B},$ therefore is
nonsingular. Moreover, since the\ $S^{\overline{x}}_{\left[  I\right]  }$'s
are smooth, the diffeomorphism is smooth, too.

Next, we express our original vector fields $X_{i}$ in terms of regularized
canonical coordinates $u$: let us write $X_{\left[  I\right]  }^{u}$ to denote
the vector field $X_{\left[  I\right]  }$ expressed in coordinates $u$. The
following facts are immediate:

\begin{proposition}

\begin{itemize}
\item[(i)] The (transformed) vector field $X_{i}^{u}$ has $C^{r-p_{i},\alpha}$
coefficients, for $i=0,1,2,...,n$;

\item[(ii)] the vector field $X_{\left[  I\right]  }^{u}$ has $C^{r-\left\vert
I\right\vert ,\alpha}$ coefficients, for any $I\ $such that $\left\vert
I\right\vert \leq r;$ in particular, all the $\left\{  X_{\left[  I\right]
}^{u}\right\}  _{I\in B}$ have $C^{\alpha}$ coefficients;

\item[(iii)]
\[
\left[  X_{\left[  I\right]  },X_{\left[  J\right]  }\right]  ^{u}=\left[
X_{\left[  I\right]  }^{u},X_{\left[  J\right]  }^{u}\right]
\]
for any $I,J$ such that $\left\vert I\right\vert +\left\vert J\right\vert \leq
r$.
\end{itemize}
\end{proposition}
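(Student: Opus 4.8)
The plan is to reduce everything to the observation that the map
\[
x=\Phi(u):=\exp\left(\sum_{I\in B}u_{I}S_{[I]}^{\overline{x}}\right)(\overline{x})
\]
of \eqref{diffeomorphism} is a \emph{smooth} diffeomorphism between suitable neighborhoods of $0$ and of $\overline{x}$. Indeed the $S_{i}^{\overline{x}}$ have polynomial coefficients, hence are $C^{\infty}$, so the vector fields $S_{[I]}^{\overline{x}}$ are $C^{\infty}$, the vector field $\sum_{I\in B}u_{I}S_{[I]}^{\overline{x}}$ depends smoothly (indeed linearly) on $u$, and the flow of a smooth vector field depends smoothly on the time and on the initial point; nonsingularity of the Jacobian at $u=0$ has already been observed. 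Consequently $\Phi$ is $C^{\infty}$, and the entries of $(D\Phi)^{-1}$, being ratios of smooth functions by the nowhere vanishing Jacobian determinant, are $C^{\infty}$ as well.

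For (i) and (ii) I would use the transformation rule of a vector field under the change of variables $x=\Phi(u)$: if $V=\sum_{j}v_{j}(x)\,\partial_{x_{j}}$ then $V^{u}=\sum_{k}\left(\sum_{j}v_{j}(\Phi(u))\,[(D\Phi(u))^{-1}]_{kj}\right)\partial_{u_{k}}$, so the coefficients of $V^{u}$ are finite sums of products of the functions $v_{j}\circ\Phi$ by $C^{\infty}$ functions. Then I would invoke two elementary stability properties of H\"older classes: first, if $f\in C^{k,\alpha}$ and $g\in C^{\infty}$ then $f\circ g\in C^{k,\alpha}$ locally (by Fa\`{a} di Bruno's formula each $\partial^{\beta}(f\circ g)$ with $|\beta|\le k$ is a sum of terms $(\partial^{\gamma}f)\circ g$, $|\gamma|\le k$, multiplied by smooth factors, and $\partial^{\gamma}f\in C^{\alpha}$ composed with the locally Lipschitz map $g$ remains in $C^{\alpha}$); second, the product of a $C^{k,\alpha}$ function by a $C^{\infty}$ function is again $C^{k,\alpha}$ (Leibniz rule). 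Applying this with $V=X_{i}$, whose coefficients are $C^{r-p_{i},\alpha}$ by (B1), gives (i); applying it with $V=X_{[I]}$, whose coefficients are $C^{r-|I|,\alpha}$ by the regularity Lemma proved under Assumptions (B), gives (ii). The last clause of (ii) is then immediate, since $|I|\le r$ for $I\in B$ forces $C^{r-|I|,\alpha}\subseteq C^{0,\alpha}=C^{\alpha}$; note also that (i) is just the case $I=(i)$ of (ii).

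For (iii) the mechanism is the coordinate-invariance of the Lie bracket. For $C^{1}$ vector fields $V,W$ and a $C^{2}$ (here $C^{\infty}$) diffeomorphism $\Phi$ one has $V^{u}(f\circ\Phi)=(Vf)\circ\Phi$ for every $f\in C^{2}$, whence $V^{u}(W^{u}(f\circ\Phi))=(V(Wf))\circ\Phi$ and, subtracting the symmetric term, $[V^{u},W^{u}](f\circ\Phi)=([V,W]f)\circ\Phi=([V,W])^{u}(f\circ\Phi)$; letting $f$ range over $C^{\infty}$ gives $([V,W])^{u}=[V^{u},W^{u}]$. I would apply this with $V=X_{[I]}$ and $W=X_{[J]}$. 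The point that requires care --- and the only real bookkeeping obstacle --- is to make sure every bracket that occurs is formed from vector fields with enough derivatives: in the $x$-variables $X_{[I]}$ and $X_{[J]}$ must be $C^{1}$, and in the $u$-variables $X_{[I]}^{u}$ and $X_{[J]}^{u}$ must be $C^{1}$ for $[X_{[I]}^{u},X_{[J]}^{u}]$ to make sense. By (ii) these vector fields have $C^{r-|I|,\alpha}$, respectively $C^{r-|J|,\alpha}$, coefficients, and since $|I|+|J|\le r$ with $|I|,|J|\ge 1$ one has $|I|,|J|\le r-1$, so all of them are at least $C^{1,\alpha}$ and the argument is legitimate. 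The verification that $b_{ij}\circ\Phi$ and the transformed commutator coefficients lose no H\"older regularity under $\Phi$ was already carried out in the previous step, so nothing further is needed; the rest is routine.
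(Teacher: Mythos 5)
Your proposal is correct and follows what is essentially the intended (and only natural) route: the paper states these facts without proof, calling them immediate consequences of the smoothness of the diffeomorphism $u\mapsto\exp\bigl(\sum_{I\in B}u_{I}S_{[I]}^{\overline{x}}\bigr)(\overline{x})$, and your argument simply supplies the routine verification (H\"older stability under composition with, and multiplication by, smooth functions, plus naturality of the Lie bracket under a smooth change of variables, with the correct bookkeeping that $|I|+|J|\le r$ guarantees $C^{1,\alpha}$ coefficients so all brackets are well defined).
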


\subsection{Weights and approximation}

Using the coordinates $u$ we can give the following

\begin{definition}
Let $X$ be a vector field with possibly nonsmooth coefficients. We will say
that $X$ has weight $\geq k\in\mathbb{R}$ near the point $\overline{x}$ if,
expressing it in regularized canonical coordinates
\[
X^{u}=\sum_{J\in B}c_{J}\left(  u\right)  \partial_{u_{J}}%
\]
we have:%
\[
\left\vert c_{J}\left(  u\right)  \right\vert \leq c\left\Vert u\right\Vert
^{k+\left\vert J\right\vert }\text{.}%
\]
for $u$ in a neighborhood of $0$.
\end{definition}

Here, as in \S \ref{subsection canonical coordinates},%
\[
\left\Vert u\right\Vert =\sum_{J\in B}\left\vert u_{J}\right\vert
^{1/\left\vert J\right\vert }.
\]
Note that if $X$ is a smooth vector field of weight $\geq k\in\mathbb{Z},$ in
the sense of Definition \ref{Definition weights}, then it is also of weight
$\geq k$ in the sense of the above definition. We will also use the following
elementary remark:

if $X,Y$ have weight $\geq k_{X},k_{Y},$ respectively, then $X\pm Y$ has
weight $\geq\min\left(  k_{X},k_{Y}\right)  .$

\begin{proposition}
\label{Proposition X_I-S_I}The vector field $X_{\left[  I\right]
}-S^{\overline{x}}_{\left[  I\right]  }$ has weight $\geq\alpha-\left\vert
I\right\vert $ near $\overline{x},$ for any $\left\vert I\right\vert \leq r.$
\end{proposition}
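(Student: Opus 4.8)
The plan is to pass to the regularized canonical coordinates $u$ (the coordinates in which the notion of weight near $\overline{x}$ is defined) and simply read off the required coefficient bound from the estimate already recorded in Proposition \ref{Proposition S_i}, after transporting it through the change of variables (\ref{diffeomorphism}). Set $Z:=X_{[I]}-S^{\overline{x}}_{[I]}$. By Proposition \ref{Proposition S_i}, in the standard coordinates $x$ of $\mathbb{R}^{p}$ we have $Z=\sum_{j=1}^{p}c_{I}^{j}(x)\,\partial_{x_{j}}$ with $c_{I}^{j}(x)=O\bigl(|x-\overline{x}|^{\,r-|I|+\alpha}\bigr)$ for $x$ near $\overline{x}$ (this is meaningful precisely because $|I|\le r$, so $X_{[I]}$ and $S^{\overline{x}}_{[I]}$ are defined and the difference has the stated Hölder order of vanishing at $\overline{x}$).

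Next I would transport this to the coordinates $u$. The map $u\longmapsto x=\exp\bigl(\sum_{I\in B}u_{I}S^{\overline{x}}_{[I]}\bigr)(\overline{x})$ is a \emph{smooth} diffeomorphism of a neighborhood of $0$ onto a neighborhood of $\overline{x}$, with $x(0)=\overline{x}$ and nonsingular Jacobian at $0$; hence its inverse $x\mapsto u(x)$ is smooth too, the partials $\partial u_{J}/\partial x_{j}$ are bounded near $\overline{x}$, and $|x(u)-\overline{x}|\le C|u|$ for $u$ near $0$. Writing $Z^{u}=\sum_{J\in B}d_{J}(u)\,\partial_{u_{J}}$ one has $d_{J}(u)=(Z\,u_{J})(x(u))=\sum_{j}c_{I}^{j}(x(u))\,\tfrac{\partial u_{J}}{\partial x_{j}}(x(u))$, so
\[
|d_{J}(u)|\ \le\ C\,|x(u)-\overline{x}|^{\,r-|I|+\alpha}\ \le\ C\,|u|^{\,r-|I|+\alpha}.
\]
It then remains to compare $|u|$ with $\|u\|=\sum_{J\in B}|u_{J}|^{1/|J|}$. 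In a small enough neighborhood of $0$ each $|u_{J}|\le1$, hence $|u_{J}|\le|u_{J}|^{1/|J|}$ and therefore $|u|\le\sum_{J}|u_{J}|\le\|u\|$; shrinking further we may also assume $\|u\|\le1$. Since every $J\in B$ has $|J|\le r$ and $r-|I|+\alpha>0$,
\[
|u|^{\,r-|I|+\alpha}\ \le\ \|u\|^{\,r-|I|+\alpha}\ \le\ \|u\|^{\,(\alpha-|I|)+|J|},
\]
the last inequality because $r-|I|+\alpha\ge(\alpha-|I|)+|J|$ and $\|u\|\le1$. Combining, $|d_{J}(u)|\le C\,\|u\|^{\,(\alpha-|I|)+|J|}$ for every $J\in B$, which is exactly the assertion that $X_{[I]}-S^{\overline{x}}_{[I]}$ has weight $\ge\alpha-|I|$ near $\overline{x}$.

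All the substantive content here is already packaged in Proposition \ref{Proposition S_i} — the Hölder-order vanishing at $\overline{x}$ of the difference between the original vector fields and their Taylor-truncated models — so I do not expect a genuine obstacle; the present statement is a change-of-variables bookkeeping. The only points that require care are that the change to regularized canonical coordinates is honestly smooth (this is the whole reason those coordinates were introduced, in place of the merely Hölder canonical coordinates) and the exponent arithmetic, which works out solely because $|J|\le r$ for $J\in B$.
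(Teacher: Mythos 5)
Your proof is correct and follows essentially the same route as the paper: invoke Proposition \ref{Proposition S_i} for the bound $c_{I}^{j}(x)=O\left(\left\vert x-\overline{x}\right\vert ^{r-\left\vert I\right\vert +\alpha}\right)$, push the coefficients through the smooth diffeomorphism (\ref{diffeomorphism}), and conclude with the exponent comparison $r-\left\vert I\right\vert +\alpha\geq\left\vert J\right\vert -\left\vert I\right\vert +\alpha$ valid because $\left\vert J\right\vert \leq r$ and $\left\Vert u\right\Vert $ is small. The only (harmless) difference is that you spell out the intermediate comparison $\left\vert u\right\vert \leq\left\Vert u\right\Vert $ which the paper leaves implicit.
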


\begin{proof}
By Proposition \ref{Proposition S_i}, we know that%
\begin{equation}
X_{\left[  I\right]  }-S^{\overline{x}}_{\left[  I\right]  }=\sum_{j=1}%
^{p}c_{I}^{j}\left(  x\right)  \partial_{x_{j}}\text{ with }c_{I}^{j}\left(
x\right)  =O\left(  \left\vert x-\overline{x}\right\vert ^{r-\left\vert
I\right\vert +\alpha}\right)  . \label{X_I-S_I}%
\end{equation}
Let
\[
u=F\left(  x\right)  =\left(  F_{J}\left(  x\right)  \right)  _{J\in B}%
\]
be the local smooth diffeomorphism defined as in (\ref{diffeomorphism}), and
let $x=F^{-1}\left(  u\right)  $ be its inverse. Then vector fields are
transformed according to the law:%
\[
\partial_{x_{j}}=\sum_{J\in B}\left(  \partial_{x_{j}}F_{J}\right)  \left(
x\right)  \partial_{u_{J}}.
\]
Therefore%
\[
X_{\left[  I\right]  }^{u}-\left(  S^{\overline{x}}_{\left[  I\right]
}\right)  ^{u}=\sum_{J\in B}\sum_{j=1}^{p}\left(  c_{I}^{j}\partial_{x_{j}%
}F_{J}\right)  \left(  x\right)  \partial_{u_{J}}=\sum_{J\in B}\widetilde
{c}_{I}^{J}\left(  u\right)  \partial_{u_{J}}%
\]
with%
\[
\left\vert \widetilde{c}_{I}^{J}\left(  u\right)  \right\vert =\left\vert
\sum_{j=1}^{p}\left(  c_{I}^{j}\partial_{x_{j}}F_{J}\right)  \left(  x\right)
\right\vert \leq
\]
since $F$ is a smooth diffeomorphism and by (\ref{X_I-S_I})%
\[
\leq c\sum_{j=1}^{p}\left\vert c_{I}^{j}\left(  x\right)  \right\vert \leq
c\left\vert x-\overline{x}\right\vert ^{r-\left\vert I\right\vert +\alpha}\leq
c\left\Vert u\right\Vert ^{r-\left\vert I\right\vert +\alpha}\leq c\left\Vert
u\right\Vert ^{\left\vert J\right\vert -\left\vert I\right\vert +\alpha}.
\]
This ends the proof.
\end{proof}

\begin{proposition}
\label{Prop weight X-Y}For any $\left\vert I\right\vert \leq r$ we have:

\begin{enumerate}
\item[(i)] the vector field $X_{\left[  I\right]  }$ has weight $\geq
-\left\vert I\right\vert $ near $\overline{x}$.

\item[(ii)] If $Y_{0},Y_{1},\ldots,Y_{n}$ is any system of smooth vector
fields satisfying (with respect to regularized canonical coordinates)%
\[
\sum_{I\in B}u_{I}e_{I}=\sum_{I\in B}u_{I}Y_{\left[  I\right]  }%
\]
then $X_{\left[  I\right]  }-Y_{\left[  I\right]  }$ has weight $\geq
\alpha-\left\vert I\right\vert $.
\end{enumerate}
\end{proposition}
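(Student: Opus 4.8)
The two assertions in Proposition~\ref{Prop weight X-Y} should be deduced from the corresponding facts for the \emph{smooth} vector fields $S^{\overline{x}}_{i}$ (to which the full theory of Section~\ref{section approximation} applies) together with Proposition~\ref{Proposition X_I-S_I}, which measures how far $X_{[I]}$ sits from $S^{\overline{x}}_{[I]}$. The key observation is that the regularized canonical coordinates $u$ attached to $\overline{x}$ are, by construction, the \emph{canonical} coordinates induced by the smooth system $\{S^{\overline{x}}_{[I]}\}_{I\in B}$; hence the smooth results of \S\ref{subsection canonical coordinates} and \S\ref{subsection pointwise-approximation} apply verbatim to the $S^{\overline{x}}_{i}$ in these very coordinates.

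\medskip

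For part~(i): by Theorem~\ref{Thm weight of X_[I]} applied to the smooth free system $\{S^{\overline{x}}_{i}\}$, the vector field $S^{\overline{x}}_{[I]}$ has weight $\geq -\lvert I\rvert$ at $u=0$ in the sense of Definition~\ref{Definition weights}, hence also in the (weaker, Hölder) sense of the definition opening \S\ref{subsec weights-and-approx}. By Proposition~\ref{Proposition X_I-S_I}, $X_{[I]}-S^{\overline{x}}_{[I]}$ has weight $\geq \alpha-\lvert I\rvert \geq -\lvert I\rvert$ near $\overline{x}$. Adding the two, and using the elementary remark recorded just before the statement ($X\pm Y$ has weight $\geq \min(k_X,k_Y)$), we conclude that $X_{[I]} = S^{\overline{x}}_{[I]} + (X_{[I]}-S^{\overline{x}}_{[I]})$ has weight $\geq -\lvert I\rvert$ near $\overline{x}$.

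\medskip

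For part~(ii): we have the decomposition
\[
X_{[I]} - Y_{[I]} = \bigl(X_{[I]} - S^{\overline{x}}_{[I]}\bigr) + \bigl(S^{\overline{x}}_{[I]} - Y_{[I]}\bigr).
\]
The first bracket has weight $\geq \alpha - \lvert I\rvert$ by Proposition~\ref{Proposition X_I-S_I}. For the second bracket we want to apply the smooth pointwise approximation theorem (Theorem~\ref{Thm Approximation}) to the pair $(S^{\overline{x}}_i, Y_i)$: both systems are expressed in the \emph{same} coordinates $u$, the $S^{\overline{x}}_i$ satisfy the Euler identity \eqref{Euler} in these coordinates (Lemma before Definition~\ref{Definition weights}), and the $Y_i$ satisfy \eqref{Euler Y} by hypothesis; hence Theorem~\ref{Thm Approximation} gives that $S^{\overline{x}}_{[I]} - Y_{[I]}$ has weight $\geq 1-\lvert I\rvert$ at $u=0$ in the sense of Definition~\ref{Definition weights}, in particular weight $\geq 1-\lvert I\rvert \geq \alpha - \lvert I\rvert$ in the Hölder sense. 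Using once more the additivity-under-minimum remark, $X_{[I]}-Y_{[I]}$ has weight $\geq \min(\alpha,1)-\lvert I\rvert = \alpha - \lvert I\rvert$ near $\overline{x}$, which is the claim.

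\medskip

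\textbf{Where the difficulty lies.} There is no deep obstacle here: the proposition is essentially a bookkeeping step that transfers the hard smooth results (Theorems~\ref{Thm weight of X_[I]} and~\ref{Thm Approximation}) through the Hölder estimate of Proposition~\ref{Proposition X_I-S_I}. The one point requiring slight care is to confirm that the smooth theory is being invoked in the correct coordinate system — namely that the regularized canonical coordinates of $X$ really coincide with the ordinary canonical coordinates of the smooth approximant $S^{\overline{x}}$, so that the Euler identity \eqref{Euler} is available for $S^{\overline{x}}$ in these coordinates — and that Definition~\ref{Definition weights}'s notion of weight (terms in a Taylor expansion) is indeed stronger than the Hölder notion of weight (a pointwise bound by $\lVert u\rVert^{k+\lvert J\rvert}$), which follows because a $C^\infty$ coefficient whose Taylor expansion at $0$ has all terms of weight $\geq k+\lvert J\rvert$ is $O(\lVert u\rVert^{k+\lvert J\rvert})$. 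Both points are already flagged in the text preceding the statement, so the argument is short.
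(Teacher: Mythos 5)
Your argument is correct and is essentially the paper's own proof: the same decomposition $X_{[I]}=S^{\overline{x}}_{[I]}+(X_{[I]}-S^{\overline{x}}_{[I]})$ and $X_{[I]}-Y_{[I]}=(X_{[I]}-S^{\overline{x}}_{[I]})+(S^{\overline{x}}_{[I]}-Y_{[I]})$, with the smooth theory of Section 2 applied to the Taylor approximants $S^{\overline{x}}_i$ in the regularized canonical coordinates, Proposition \ref{Proposition X_I-S_I} for the Hölder remainder, and the minimum rule for sums. The only point the paper makes explicit that you leave implicit is why Theorem \ref{Thm Approximation} is applicable to the $S^{\overline{x}}_i$, namely that they are free up to weight $r$ at $\overline{x}$ because they satisfy the same commutation relations as the $X_i$ there.
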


\begin{proof}
We can apply to the system of smooth vector fields $\left\{  S^{\overline{x}%
}_{\left[  I\right]  }\right\}  _{I\in B}$ the theory developed in Section
\ref{section approximation} and say that:

\begin{enumerate}
\item[(i)] the vector field $S^{\overline{x}}_{\left[  I\right]  }$ has weight
$\geq-\left\vert I\right\vert $.

\item[(ii)] $S^{\overline{x}}_{\left[  I\right]  }-Y_{\left[  I\right]  }$ has
weight $\geq1-\left\vert I\right\vert $.
\end{enumerate}

Assertion (ii) exploits the fact that the $S^{\overline{x}}_{i}$'s are free up
to weight $r$ at $\overline{x},$ if the $X_{i}$'s are so, because the $X_{i}%
$'s and the $S^{\overline{x}}_{i}$'s satisfy the same commutation relations,
up to weight $r$, at $\overline{x}.$

Therefore, by Proposition \ref{Proposition X_I-S_I}, we conclude that:

\begin{enumerate}
\item[(i)] $X_{\left[  I\right]  }=S^{\overline{x}}_{\left[  I\right]
}+\left(  X_{\left[  I\right]  }-S^{\overline{x}}_{\left[  I\right]  }\right)
$ has weight $\geq\min\left(  -\left\vert I\right\vert ,-\left\vert
I\right\vert +\alpha\right)  =-\left\vert I\right\vert $

\item[(ii)] $X_{\left[  I\right]  }-Y_{\left[  I\right]  }=\left(  X_{\left[
I\right]  }-S_{\left[  I\right]  }^{\overline{x}}\right)  +\left(  S_{\left[
I\right]  }^{\overline{x}}-Y_{\left[  I\right]  }\right)  $ has weight
$\geq\min\left(  \alpha-\left\vert I\right\vert ,1-\left\vert I\right\vert
\right)  =\alpha-\left\vert I\right\vert .$\medskip
\end{enumerate}
\end{proof}

We now need some quantitative information about the dependence of our
approximation procedure on the point $\overline{x}$.

\begin{proposition}
\label{NonSmoothExpMap}Let $E\left(  u,\overline{x}\right)  =\exp\left(
\sum_{I\in B}u_{I}S_{\left[  I\right]  }^{\overline{x}}\right)  \left(
\overline{x}\right)  $. Then for every multi-index $\beta$ the derivative
$\frac{\partial^{\left\vert \beta\right\vert }E}{\partial u^{\beta}}\left(
u,\overline{x}\right)  $ depends on $\overline{x}$ in a $C^{\alpha}$ way.
\end{proposition}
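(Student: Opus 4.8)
Proof proposal.

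The plan is to read $E(u,\overline{x})$ as the time-$1$ value of the flow of the vector field $\sum_{I\in B}u_{I}S^{\overline{x}}_{[I]}$ issuing from $\overline{x}$, and to track how the data of the corresponding Cauchy problem depend on the parameter $\overline{x}$. The first step is to record the precise structure of the vector fields $S^{\overline{x}}_{[I]}$ for $|I|\leq r$. By construction the coefficients of $S^{\overline{x}}_{i}$ are the Taylor polynomials of order $r-p_{i}$, centred at $\overline{x}$, of the coefficients $b_{ij}$ of $X_{i}$ (see Proposition \ref{Proposition S_i}); forming the iterated brackets $S^{\overline{x}}_{[I]}$ only amounts to differentiating these polynomials in $x$ and multiplying them, so each coefficient of $S^{\overline{x}}_{[I]}$ is a fixed polynomial in $x-\overline{x}$ whose own coefficients are polynomial expressions in the finitely many numbers $\{\partial^{\gamma}b_{ij}(\overline{x})\}_{|\gamma|\leq r-p_{i}}$. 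Hence these coefficient functions are $C^{\infty}$ in $x$ and, by assumption (B1) which gives $b_{ij}\in C^{r-p_{i},\alpha}$, they are $C^{\alpha}$ in $\overline{x}$, uniformly for $x$ in a compact set and $\overline{x}\in\Omega^{\prime}$. The essential point, which is really the only subtlety of the whole proof, is that neither the $x$-differentiations used to build the commutators nor the $u$-differentiations performed below ever touch the $\overline{x}$-dependence: it always sits in those outermost Taylor coefficients, so nothing worse than $C^{\alpha}$ in $\overline{x}$ is ever produced.

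Next I would set up the augmented variational system. Write $F(y;u,\overline{x})=\sum_{I\in B}u_{I}\big(S^{\overline{x}}_{[I]}\big)_{y}$, so that $\Phi(t):=\phi(t;u,\overline{x})$, the solution of $\dot{\Phi}=F(\Phi;u,\overline{x})$, $\Phi(0)=\overline{x}$, satisfies $E(u,\overline{x})=\Phi(1)$. Since $F$ is $C^{\infty}$ (indeed polynomial) in $y$ and smooth in $u$, classical ODE theory gives that $E(\cdot,\overline{x})$ is $C^{\infty}$ in $u$ near $0$ and that $\partial_{u}^{\beta}E(u,\overline{x})$ equals, at $t=1$, the appropriate component of the solution $Z=(\Phi,\{\partial_{u_{I}}\Phi\},\{\partial^{2}_{u_{I}u_{J}}\Phi\},\dots)$, up to order $|\beta|$, of a system $\dot{Z}=G(Z;u,\overline{x})$, $Z(0)=Z_{0}(\overline{x})$. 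Here $G$ is assembled from $F$ and its $y$- and $u$-derivatives, hence is $C^{\infty}$ (polynomial) in $Z$, smooth in $u$, and $C^{\alpha}$ in $\overline{x}$ locally uniformly, while $Z_{0}(\overline{x})=(\overline{x},0,\dots,0)$ is $C^{\infty}$ in $\overline{x}$. After shrinking once and for all the $u$-neighbourhood of $0$, the $\Phi$-component stays in a fixed compact set for $t\in[0,1]$ and all $\overline{x}\in\Omega^{\prime}$, and the remaining components solve linear equations with bounded coefficients, hence are bounded on $[0,1]$ as well. On the resulting bounded region $G$ may therefore be taken Lipschitz in $Z$ with some constant $L$ and $\alpha$-Hölder in $\overline{x}$ with some constant $C$.

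Finally I would run a Gronwall comparison. Fix $\overline{x}_{1},\overline{x}_{2}\in\Omega^{\prime}$ and let $Z_{1},Z_{2}$ be the corresponding solutions; subtracting the integral forms of the two Cauchy problems and using the splitting
\[
G(Z_{1}(s);\overline{x}_{1})-G(Z_{2}(s);\overline{x}_{2})=\big[G(Z_{1}(s);\overline{x}_{1})-G(Z_{2}(s);\overline{x}_{1})\big]+\big[G(Z_{2}(s);\overline{x}_{1})-G(Z_{2}(s);\overline{x}_{2})\big]
\]
together with $|Z_{0}(\overline{x}_{1})-Z_{0}(\overline{x}_{2})|\leq c|\overline{x}_{1}-\overline{x}_{2}|\leq c|\overline{x}_{1}-\overline{x}_{2}|^{\alpha}$ (valid since $\alpha\leq 1$ and the points range in a bounded set), one obtains
\[
|Z_{1}(t)-Z_{2}(t)|\leq c|\overline{x}_{1}-\overline{x}_{2}|^{\alpha}+\int_{0}^{t}\Big(L|Z_{1}(s)-Z_{2}(s)|+C|\overline{x}_{1}-\overline{x}_{2}|^{\alpha}\Big)\,ds.
\]
Gronwall's lemma then gives $|Z_{1}(t)-Z_{2}(t)|\leq C^{\prime}|\overline{x}_{1}-\overline{x}_{2}|^{\alpha}$ for all $t\in[0,1]$, with $C^{\prime}$ depending only on $L$, $C$ and the data in (B1); reading off the component attached to $\partial_{u}^{\beta}$ at $t=1$ shows that $\overline{x}\mapsto\partial_{u}^{\beta}E(u,\overline{x})$ is $C^{\alpha}$, uniformly for $u$ in the chosen neighbourhood of $0$, which is precisely the assertion of Proposition \ref{NonSmoothExpMap}. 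The only genuine obstacle is the bookkeeping of the first paragraph — confirming that building the $S^{\overline{x}}_{[I]}$ and differentiating the flow in $u$ keep the parameter dependence confined to the $C^{\alpha}$ Taylor coefficients; once this is granted, the rest is the standard dependence-on-parameters argument, performed with a Hölder rather than Lipschitz modulus in the parameter.
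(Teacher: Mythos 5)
Your proposal is correct and follows essentially the same route as the paper: both express $E(u,\overline{x})$ as the time-one value of the flow of $\sum_{I\in B}u_{I}S^{\overline{x}}_{[I]}$, observe that the coefficients are smooth in $x$ and $C^{\alpha}$ in $\overline{x}$, and deduce the $C^{\alpha}$ dependence of the $u$-derivatives from the (linear) variational equations. The only difference is organizational: you bundle all derivatives up to order $\left\vert\beta\right\vert$ into one augmented system and carry out the Gronwall comparison explicitly, whereas the paper treats one derivative at a time and iterates, leaving the stability-in-parameter step implicit.
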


\begin{proof}
We know that%
\[
E\left(  u,\overline{x}\right)  =\gamma\left(  1,u,\overline{x}\right)
\]
where $\gamma$ solves the Cauchy problem%
\[
\left\{
\begin{array}
[c]{l}%
\frac{d}{dt}\gamma\left(  t,u,\overline{x}\right)  =\sum_{I\in B}u_{I}\left(
S_{\left[  I\right]  }^{\overline{x}}\right)  _{\gamma\left(  t,u,\overline
{x}\right)  }\\
\gamma\left(  0,u,\overline{x}\right)  =\overline{x}.
\end{array}
\right.
\]
For a fixed $\overline{x}$ the solution $\gamma\left(  \cdot,\cdot
,\overline{x}\right)  $ is smooth; moreover $\gamma$ depends on $\overline{x}$
in a $C^{\alpha}$ way. Therefore%
\[
\left\{
\begin{array}
[c]{l}%
\frac{d}{dt}\frac{\partial\gamma}{\partial u_{J}}\left(  t,u,\overline
{x}\right)  =\sum_{I\in B}u_{I}\sum_{j=1}^{p}\left(  \frac{\partial S_{\left[
I\right]  }^{\overline{x}}}{\partial x_{j}}\left(  \gamma\left(
t,u,\overline{x}\right)  \right)  \frac{\partial\gamma_{j}}{\partial u_{J}%
}\left(  t,u,\overline{x}\right)  \right)  +\left(  S_{\left[  J\right]
}^{\overline{x}}\right)  _{\gamma\left(  t,u,\overline{x}\right)  }\\
\frac{\partial\gamma}{\partial u_{J}}\left(  0,u,\overline{x}\right)  =0.
\end{array}
\right.
\]
Let now%
\begin{align*}
\omega\left(  t,u,\overline{x}\right)   &  =\frac{\partial\gamma}{\partial
u_{J}}\left(  t,u,\overline{x}\right)  ,\\
A\left(  t,u,\overline{x}\right)   &  =\sum_{I\in B}u_{I}~\frac{\partial
S_{\left[  I\right]  }^{\overline{x}}}{\partial x}\left(  \gamma\left(
t,u,\overline{x}\right)  \right) \\
B_{J}\left(  t,u,\overline{x}\right)   &  =\left(  S_{\left[  J\right]
}^{\overline{x}}\right)  _{\gamma\left(  t,u,\overline{x}\right)  }%
\end{align*}
Since $\left(  S_{\left[  J\right]  }^{\overline{x}}\right)  _{x}$ and
$\frac{\partial S_{\left[  I\right]  }^{\overline{x}}}{\partial x_{j}}\left(
x\right)  $ are smooth in the $x$ variable and $C^{\alpha}$ in the
$\overline{x}$ variable, the functions $A\left(  t,u,\overline{x}\right)  $
and $B_{J}\left(  t,u,\overline{x}\right)  $ are smooth in $\left(
t,u\right)  $ and $C^{\alpha}$ in $\overline{x}$. With the above notation,%
\begin{align*}
\frac{d}{dt}\omega\left(  t,u,\overline{x}\right)   &  =A\left(
t,u,\overline{x}\right)  ~\omega\left(  t,u,\overline{x}\right)  +B_{J}\left(
t,u,\overline{x}\right) \\
\omega\left(  0,u,\overline{x}\right)   &  =0
\end{align*}
from where we readily see that $\omega$ is $C^{\alpha}$ in $\overline{x}$.
This shows that $\frac{\partial E}{\partial u_{J}}\left(  u,\overline
{x}\right)  =\omega\left(  1,u,\overline{x}\right)  $ has the same property.
An iteration of this argument yields the desired results.
\end{proof}

To make more usable Proposition \ref{Prop weight X-Y} we have to construct, as
in the smooth case, a map $u=\Theta\left(  \overline{x},x\right)  ,$ allowing
to compute the derivative $X_{\left[  I\right]  }f\left(  x\right)  $ without
passing to variables $u$.

Let $E\left(  u,\overline{x}\right)  $ as in Proposition \ref{NonSmoothExpMap}%
. Clearly, for any fixed $\overline{x},$ the map $u\longmapsto E\left(
u,\overline{x}\right)  $ is smooth. Moreover, its Jacobian determinant at
$u=0$ equals%
\[
\det\left(  S_{\left[  I\right]  ,\overline{x}}\right)  \left(  \overline
{x}\right)  \neq0
\]
because $\left\{  \left(  S_{\left[  I\right]  ,\overline{x}}\right)
_{\overline{x}}\right\}  _{I\in B}$ is a basis of $\mathbb{R}^{p}.$ Therefore
there exists a smooth inverse function, which we denote by%
\[
u=\Theta_{\overline{x}}\left(  x\right)  .
\]
A basic difference with the smooth theory is that $\Theta_{\overline{x}%
}\left(  x\right)  $ is not simply $-\Theta_{x}\left(  \overline{x}\right)  $.
This is due to the fact that if $x=E\left(  u,\overline{x}\right)  $ then%
\[
E\left(  -u,x\right)  =\exp\left(  -\sum_{I\in B}u_{I}S_{\left[  I\right]
}^{x}\right)  \exp\left(  \sum_{I\in B}u_{I}S_{\left[  I\right]  }%
^{\overline{x}}\right)  \left(  \overline{x}\right)  \neq\overline{x}%
\]
because the vector fields in the first exponential are $S_{\left[  I\right]
}^{x}$ while those in the second one are $S_{\left[  I\right]  }^{\overline
{x}}$. Due to this asymmetry, we cannot expect $\Theta_{\overline{x}}\left(
x\right)  $ to be as smooth in the $\overline{x}$ variable as it is in the $x$
variable. Instead, since the vector fields depend on $\overline{x}$ in a
$C^{\alpha}$ continuous way, the best we can hope is $C^{\alpha}$ continuity
with respect to $\overline{x}$ also for $\Theta_{\overline{x}}\left(
x\right)  .$ This is actually the case:

\begin{proposition}
\label{Prop Theta C alfa}For any fixed $x,$ the map $\overline{x}\mapsto
\Theta_{\overline{x}}\left(  x\right)  $ is $C^{\alpha}$. Moreover, each entry
of the Jacobian matrix
\[
\frac{\partial\Theta_{\overline{x}}\left(  x\right)  }{\partial x}%
\]
still depends on $\overline{x}$ in a $C^{\alpha}$ way.
\end{proposition}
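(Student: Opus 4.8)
The plan is to bootstrap from Proposition \ref{NonSmoothExpMap} via the implicit function theorem, just as in the smooth case but keeping careful track of regularity in $\overline{x}$. Recall that $\Theta_{\overline{x}}(x)$ is defined implicitly by $E\left(\Theta_{\overline{x}}(x),\overline{x}\right)=x$. First I would set $G(u,\overline{x},x)=E(u,\overline{x})-x$ and note that, by Proposition \ref{NonSmoothExpMap}, $G$ and all its $u$-derivatives are smooth in $(u,x)$ and $C^{\alpha}$ in $\overline{x}$; moreover the $u$-Jacobian $\partial_u G$ at $u=0$ equals the matrix with rows $\left(S_{\left[I\right]}^{\overline{x}}\right)_{\overline{x}}=\left(X_{\left[I\right]}\right)_{\overline{x}}$, which is invertible by assumption (B2), with inverse bounded uniformly on $\Omega'$ by the quantitative nondegeneracy constant $c_0$. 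The implicit function theorem then produces the smooth-in-$x$ map $u=\Theta_{\overline{x}}(x)$, and the point is to extract its dependence on $\overline{x}$.

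Next I would obtain the $C^{\alpha}$-in-$\overline{x}$ statement for $\Theta_{\overline{x}}(x)$ itself. Since $\partial_u G\left(\Theta_{\overline{x}}(x),\overline{x},x\right)$ is invertible, for two parameter values $\overline{x}_1,\overline{x}_2$ one writes
\[
0 = E\left(\Theta_{\overline{x}_1}(x),\overline{x}_1\right)-E\left(\Theta_{\overline{x}_2}(x),\overline{x}_2\right),
\]
and splits this as $\left[E\left(\Theta_{\overline{x}_1}(x),\overline{x}_1\right)-E\left(\Theta_{\overline{x}_2}(x),\overline{x}_1\right)\right]+\left[E\left(\Theta_{\overline{x}_2}(x),\overline{x}_1\right)-E\left(\Theta_{\overline{x}_2}(x),\overline{x}_2\right)\right]$. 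The first bracket equals $M\cdot\left(\Theta_{\overline{x}_1}(x)-\Theta_{\overline{x}_2}(x)\right)$ for some matrix $M$ (an average of $\partial_u E$ along the segment) which is close to the invertible matrix above when $\overline{x}_1$ is near $\overline{x}_2$, hence is itself invertible with a uniformly bounded inverse; the second bracket is $O\left(|\overline{x}_1-\overline{x}_2|^{\alpha}\right)$ by the $C^{\alpha}$-in-$\overline{x}$ bound of Proposition \ref{NonSmoothExpMap}. Solving for the difference gives $\left|\Theta_{\overline{x}_1}(x)-\Theta_{\overline{x}_2}(x)\right|\leq c|\overline{x}_1-\overline{x}_2|^{\alpha}$, uniformly for $x$ in a neighborhood and $\overline{x}_i\in\Omega'$.

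Finally, for the Jacobian $\partial_x\Theta_{\overline{x}}(x)$, I would differentiate the identity $E\left(\Theta_{\overline{x}}(x),\overline{x}\right)=x$ in $x$ to get
\[
\partial_u E\left(\Theta_{\overline{x}}(x),\overline{x}\right)\cdot\partial_x\Theta_{\overline{x}}(x)=I,
\]
so that $\partial_x\Theta_{\overline{x}}(x)=\left[\partial_u E\left(\Theta_{\overline{x}}(x),\overline{x}\right)\right]^{-1}$. Now $\partial_u E(u,\overline{x})$ is $C^{\alpha}$ in $\overline{x}$ jointly with smoothness in $u$ (Proposition \ref{NonSmoothExpMap} applied to $|\beta|=1$), and the argument $u=\Theta_{\overline{x}}(x)$ has just been shown to be $C^{\alpha}$ in $\overline{x}$; composing a function that is $C^{\alpha}$ in $\overline{x}$ and Lipschitz in $u$ with a map that is $C^{\alpha}$ in $\overline{x}$ yields a $C^{\alpha}$ dependence, and matrix inversion preserves $C^{\alpha}$ regularity away from the set where the determinant vanishes (which is excluded here by the uniform lower bound coming from $c_0$). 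This gives that each entry of $\partial_x\Theta_{\overline{x}}(x)$ is $C^{\alpha}$ in $\overline{x}$, completing the proof.

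The main obstacle is the bookkeeping in the second step: one must check that the interpolating matrix $M$ is uniformly invertible on $\Omega'$ — this uses both the quantitative Hörmander condition (iii) guaranteeing $\left|\det\left(X_{\left[I\right]}\right)_{\overline{x}}\right|\geq c_0$ and the continuity of $\partial_u E$ in all variables to ensure $M$ stays near this nondegenerate matrix when $\overline{x}_1$ is close to $\overline{x}_2$ and $u$ close to $0$ — so that the constants in the final $C^{\alpha}$ estimates are genuinely uniform in $\overline{x}\in\Omega'$ rather than merely local.
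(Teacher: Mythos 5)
Your proposal is correct, and its overall architecture is the paper's: bootstrap from Proposition \ref{NonSmoothExpMap}, then obtain the Jacobian statement by differentiating $x=E\left(\Theta_{\overline{x}}\left(x\right),\overline{x}\right)$ and writing $\partial_{x}\Theta_{\overline{x}}\left(x\right)=\left(\partial_{u}E\left(\Theta_{\overline{x}}\left(x\right),\overline{x}\right)\right)^{-1}$, which is exactly the paper's closing step. Where you genuinely diverge is in the proof of the $C^{\alpha}$ dependence of $\Theta_{\overline{x}}\left(x\right)$ itself: the paper revisits the proof of the inverse function theorem, setting $\varphi_{\overline{x}}\left(u\right)=u+A_{\overline{x}}^{-1}\left(E\left(u,\overline{x}\right)-x\right)$, showing it is a contraction with constant $\delta<1$, comparing the Picard iterates for two nearby base points (the difference splits into a contraction term plus terms of size $O\left(\left\vert \overline{x}_{1}-\overline{x}_{2}\right\vert ^{\alpha}\right)$), and passing to the limit to get $\left\vert \Theta_{\overline{x}_{1}}\left(x\right)-\Theta_{\overline{x}_{2}}\left(x\right)\right\vert \leq\frac{c}{1-\delta}\left\vert \overline{x}_{1}-\overline{x}_{2}\right\vert ^{\alpha}$. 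You instead take the two inverses as already given (smooth inverse function theorem for each fixed $\overline{x}$) and estimate their difference directly from $E\left(\Theta_{\overline{x}_{1}}\left(x\right),\overline{x}_{1}\right)=E\left(\Theta_{\overline{x}_{2}}\left(x\right),\overline{x}_{2}\right)=x$, inverting the averaged Jacobian $M=\int_{0}^{1}\partial_{u}E\left(\Theta_{\overline{x}_{2}}\left(x\right)+t\left(\Theta_{\overline{x}_{1}}\left(x\right)-\Theta_{\overline{x}_{2}}\left(x\right)\right),\overline{x}_{1}\right)dt$. The two arguments use the same inputs ($C^{\alpha}$ dependence of $E$ on $\overline{x}$ uniformly for small $u$, and nondegeneracy of $\partial_{u}E\left(0,\overline{x}\right)$, whose rows are $\left(X_{\left[I\right]}\right)_{\overline{x}}$, $I\in B$); yours is a bit more direct since it avoids the iteration and the limit passage, at the cost of having to check that $M$ is uniformly invertible, which you do. One cosmetic remark: the uniform lower bound is best stated for the fixed basis $B$ (legitimate because the vector fields are free, so one basis works at all points of a compact set), rather than through the constant $c_{0}$, which is defined via a maximum over all $p$-tuples of multiindices; this does not affect the validity of your argument.
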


\begin{proof}
By the above Proposition the function $\overline{x}\mapsto E\left(
u,\overline{x}\right)  $ is $C^{\alpha}$ continuous. We are now going to
revise the proof of the inverse function theorem, showing that this implies a
$C^{\alpha}$ dependence on the same parameter $\overline{x}$ for the inverse
function $x\mapsto\Theta_{\overline{x}}\left(  x\right)  $.

Let $A_{\overline{x}}$ be the Jacobian matrix $\partial_{u}E\left(
u,\overline{x}\right)  $ evaluated at $u=0$, and, for a fixed $x,$ set%
\[
\varphi_{\overline{x}}\left(  u\right)  =u+A_{\overline{x}}^{-1}\left(
E\left(  u,\overline{x}\right)  -x\right)  .
\]
To find the inverse function of $u\mapsto E\left(  u,\overline{x}\right)  $ we
look for a fixed point of $\varphi_{\overline{x}}.$ Since $E\left(
u,\overline{x}\right)  $ is a smooth function of $u$ and $E\left(
0,\overline{x}\right)  -\overline{x}=0,$ for $u$ in a suitably small
neighborhood of $0$ and $x$ in a suitable neighborhood of $\overline{x},$
$\varphi_{\overline{x}}$ is a contraction; under these assumptions, we can
write
\[
\left\vert \varphi_{\overline{x}}\left(  u_{1}\right)  -\varphi_{\overline{x}%
}\left(  u_{1}\right)  \right\vert \leq\delta\left\vert u_{1}-u_{2}%
\right\vert
\]
for some $\delta\in\left(  0,1\right)  .$ For any two points $x_{1},x_{2}$ in
a small neighborhood of $\overline{x},$ let us define the sequence:%
\[
\left\{
\begin{array}
[c]{l}%
u_{n+1}^{x_{i}}=\varphi_{x_{i}}\left(  u_{n}\right) \\
u_{0}^{xi}=0
\end{array}
\right.
\]
for $i=1,2.$ Clearly, $u_{n}^{x_{i}}\rightarrow E\left(  \cdot,x_{i}\right)
^{-1}\left(  x\right)  \equiv u^{x_{i}}$ and%
\begin{align*}
&  u_{n+1}^{x_{1}}-u_{n+1}^{x_{2}}=u_{n}^{x_{1}}-u_{n}^{x_{2}}+A_{x_{1}}%
^{-1}\left(  E\left(  u_{n}^{x_{1}},x_{1}\right)  -x\right)  -A_{x_{2}}%
^{-1}\left(  E\left(  u_{n}^{x_{2}},x_{2}\right)  -x\right)  =\\
&  =\left\{  u_{n}^{x_{1}}-u_{n}^{x_{2}}+A_{x_{1}}^{-1}\left(  E\left(
u_{n}^{x_{1}},x_{1}\right)  -E\left(  u_{n}^{x_{2}},x_{1}\right)  \right)
\right\}  +\left\{  \left(  A_{x_{2}}^{-1}-A_{x_{1}}^{-1}\right)  \left(
x\right)  \right\}  +\\
&  +\left\{  A_{x_{1}}^{-1}\left(  E\left(  u_{n}^{x_{2}},x_{1}\right)
-E\left(  u_{n}^{x_{2}},x_{2}\right)  \right)  \right\}  +\left\{  \left(
A_{x_{1}}^{-1}-A_{x_{2}}^{-1}\right)  E\left(  u_{n}^{x_{2}},x_{2}\right)
\right\} \\
&  \equiv\left\{  A\right\}  +\left\{  B\right\}  +\left\{  C\right\}
+\left\{  D\right\}  .
\end{align*}
Now,%
\[
\left\vert \left\{  A\right\}  \right\vert =\left\vert \varphi_{x_{1}}\left(
u_{n}^{x_{1}}\right)  -\varphi_{x_{1}}\left(  u_{n}^{x_{1}}\right)
\right\vert \leq\delta\left\vert u_{n}^{x_{1}}-u_{n}^{x_{1}}\right\vert
\]
while%
\[
\left\vert \left\{  B\right\}  \right\vert +\left\vert \left\{  C\right\}
\right\vert +\left\vert \left\{  D\right\}  \right\vert \leq c\left\vert
x_{1}-x_{2}\right\vert ^{\alpha}%
\]
(where we used the fact that $E\left(  u,\overline{x}\right)  $ is $C^{\alpha
}$ in $\overline{x},$ uniformly in $u$, for small $u$). Hence%
\[
\left\vert u_{n+1}^{x_{1}}-u_{n+1}^{x_{2}}\right\vert \leq\delta\left\vert
u_{n}^{x_{1}}-u_{n}^{x_{1}}\right\vert +c\left\vert x_{1}-x_{2}\right\vert
^{\alpha}.
\]
Passing to the limit for $n\rightarrow\infty$ we get%
\[
\left\vert u^{x_{1}}-u^{x_{2}}\right\vert \leq\delta\left\vert u^{x_{1}%
}-u^{x_{2}}\right\vert +c\left\vert x_{1}-x_{2}\right\vert ^{\alpha}%
\]
Siand so%
\[
\left\vert u^{x_{1}}-u^{x_{2}}\right\vert \leq\frac{c}{1-\delta}\left\vert
x_{1}-x_{2}\right\vert ^{\alpha}%
\]
which is the desired $C^{\alpha}$ continuous dependence.

To show the $C^{\alpha}$ dependence of $\overline{x}\mapsto\frac
{\partial\left(  \Theta_{\overline{x}}\left(  x\right)  _{i}\right)
}{\partial x_{j}}$ it is enough to differentiate with respect to $x$ the
identity:
\[
x=E\left(  \Theta_{\overline{x}}\left(  x\right)  ,\overline{x}\right)
\]
finding the matrix identity%
\[
I=\frac{\partial E}{\partial u}\left(  \Theta_{\overline{x}}\left(  x\right)
,\overline{x}\right)  \frac{\partial\Theta_{\overline{x}}}{\partial x}\left(
x\right)
\]
and then%
\[
\frac{\partial\Theta_{\overline{x}}}{\partial x}\left(  x\right)  =\left(
\frac{\partial E}{\partial u}\left(  \Theta_{\overline{x}}\left(  x\right)
,\overline{x}\right)  \right)  ^{-1}.
\]
Since $\frac{\partial E}{\partial u}\left(  u,\overline{x}\right)  $ is smooth
in $x$ and $C^{\alpha}$ in $\overline{x},$ and $\Theta_{\overline{x}}\left(
x\right)  $ is $C^{\alpha}$ in $\overline{x}$, we get the desired result.
\end{proof}

We can now deduce from Proposition \ref{Prop weight X-Y} a local approximation
result, analogous to Theorem \ref{Thm Approximation local}:

\begin{theorem}
\label{Thm X=Y+R}If $X_{\left[  I\right]  }^{u}$ denotes the vector field
$X_{\left[  I\right]  }$ expressed in regularized canonical coordinates
centered at $\overline{x},$ and $Y_{\left[  I\right]  }$ are left invariant
homogeneous vector field on the group $G$, as above, then
\[
X_{\left[  I\right]  }^{u}=Y_{\left[  I\right]  }+R_{\overline{x},\left[
I\right]  },
\]
where $R_{\overline{x},\left[  I\right]  }$ is a $C^{r-\left\vert I\right\vert
,\alpha}$ vector field of weight $\geq\alpha-\left\vert I\right\vert $ near
$\overline{x},$ depending on $\overline{x}$ in a $C^{\alpha}$ continuous way.
\end{theorem}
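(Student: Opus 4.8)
The plan is to \emph{define} $R_{\overline{x},[I]}:=X_{[I]}^{u}-Y_{[I]}$ and then verify the three claimed properties in turn, all the real work being concentrated in the $C^{\alpha}$ dependence on $\overline{x}$.

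\emph{Weight.} By Theorem \ref{ilteoremabrutto}(i)\&(iv) the left invariant homogeneous fields $Y_{[I]}$ are smooth and satisfy $\sum_{I\in B}u_{I}e_{I}=\sum_{I\in B}u_{I}Y_{[I]}$ in the coordinates $u$. Hence Proposition \ref{Prop weight X-Y}(ii) applies to the system $\{Y_{i}\}$ and gives that $X_{[I]}-Y_{[I]}$, i.e. $R_{\overline{x},[I]}$, has weight $\geq\alpha-|I|$ near $\overline{x}$.

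\emph{Regularity in $u$.} By the properties of regularized canonical coordinates recorded in \S\ref{subsec Regularized canonical coordinates} (the proposition following (\ref{diffeomorphism})), $X_{[I]}^{u}$ has $C^{r-|I|,\alpha}$ coefficients; since $Y_{[I]}$ is left invariant and homogeneous on $G$ its coefficients are polynomials, in particular smooth. Thus $R_{\overline{x},[I]}=X_{[I]}^{u}-Y_{[I]}$ has $C^{r-|I|,\alpha}$ coefficients in $u$. Alternatively one may split $X_{[I]}^{u}=(S_{\left[I\right]}^{\overline{x}})^{u}+(X_{[I]}-S_{\left[I\right]}^{\overline{x}})^{u}$ and use Theorem \ref{Thm Approximation local} (applied to the smooth free system $S^{\overline{x}}$ and to $Y$) together with Proposition \ref{Proposition X_I-S_I}; this decomposition also recovers the weight bound, since the first summand contributes weight $\geq1-|I|$ and the second weight $\geq\alpha-|I|$.

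\emph{Dependence on $\overline{x}$.} Since $Y_{[I]}$ is independent of $\overline{x}$, it suffices to show that the coefficients of $X_{[I]}^{u}$, as functions of $u$, depend on $\overline{x}$ in a $C^{\alpha}$ way. Writing $X_{[I]}=\sum_{k}a_{[I]}^{k}(x)\partial_{x_{k}}$ (the $a_{[I]}^{k}$ being fixed and of class $C^{r-|I|,\alpha}$), the coefficient of $\partial_{u_{J}}$ in $X_{[I]}^{u}$ equals $\big(X_{[I]}[\Theta_{\overline{x}}(\cdot)_{J}]\big)(E(u,\overline{x}))=\sum_{k}a_{[I]}^{k}(E(u,\overline{x}))\big[\tfrac{\partial\Theta_{\overline{x}}}{\partial x}(E(u,\overline{x}))\big]_{Jk}$, and by the identity $\tfrac{\partial\Theta_{\overline{x}}}{\partial x}(x)=\big(\tfrac{\partial E}{\partial u}(\Theta_{\overline{x}}(x),\overline{x})\big)^{-1}$ from the proof of Proposition \ref{Prop Theta C alfa}, evaluated at $x=E(u,\overline{x})$ (where $\Theta_{\overline{x}}(x)=u$), this is $\sum_{k}a_{[I]}^{k}(E(u,\overline{x}))\big[\big(\tfrac{\partial E}{\partial u}(u,\overline{x})\big)^{-1}\big]_{Jk}$. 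Now Proposition \ref{NonSmoothExpMap} gives that $\tfrac{\partial E}{\partial u}(u,\overline{x})$ is smooth in $u$ and $C^{\alpha}$ in $\overline{x}$, and since its determinant equals $\det\big((S_{[I]}^{\overline{x}})_{\overline{x}}\big)_{I\in B}\neq0$ at $u=0$ and remains bounded away from zero on compacta, its inverse has the same joint regularity. Moreover $\overline{x}\mapsto E(u,\overline{x})$ is $C^{\alpha}$ (again Proposition \ref{NonSmoothExpMap}), and each $a_{[I]}^{k}$ is Lipschitz in $x$ (when $|I|<r$; in the top-weight case $|I|=r$ one instead uses the splitting above, so that the only factors composed with $E$ are polynomial in $x$, namely the coefficients of $S_{\left[I\right]}^{\overline{x}}$, whose Taylor coefficients depend $C^{\alpha}$ on $\overline{x}$). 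Hence each composed factor is $C^{\alpha}$ in $\overline{x}$, with constants uniform for $u$ near $0$ and $\overline{x}$ in a compact subset of $\Omega$, and multiplying them yields the $C^{\alpha}$ dependence of the coefficients of $X_{[I]}^{u}$, hence of $R_{\overline{x},[I]}$, on $\overline{x}$.

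\emph{Expected main obstacle.} The delicate point is precisely this last step: one must combine Propositions \ref{NonSmoothExpMap} and \ref{Prop Theta C alfa} correctly — using that evaluating $\tfrac{\partial\Theta_{\overline{x}}}{\partial x}$ at $x=E(u,\overline{x})$ collapses it to $\big(\tfrac{\partial E}{\partial u}(u,\overline{x})\big)^{-1}$ — and must check that composing an object which is Lipschitz (resp. smooth) in $x$ and $C^{\alpha}$ in $\overline{x}$ with the map $x=E(u,\overline{x})$, itself only $C^{\alpha}$ in $\overline{x}$, still produces a $C^{\alpha}$-in-$\overline{x}$ function, with all the Hölder and Lipschitz constants taken uniform on compact sets; this uniformity is exactly what the estimates of \S\ref{subsec Regularized canonical coordinates} (and, for the underlying balls, \S\ref{sec ballbox}) are designed to provide.
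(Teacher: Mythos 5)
Your weight and $u$-regularity arguments are fine and essentially coincide with the paper's: the paper also works in the regularized canonical coordinates of the Taylor fields $S^{\overline{x}}$, and its Proposition \ref{Prop weight X-Y}(ii) (proved by exactly the splitting $X_{\left[I\right]}-S^{\overline{x}}_{\left[I\right]}$, $S^{\overline{x}}_{\left[I\right]}-Y_{\left[I\right]}$ that you mention as an alternative) is what you cite, with the Euler identity for the $Y_{\left[I\right]}$'s coming from Theorem \ref{ilteoremabrutto} applied to the smooth free system $S^{\overline{x}}$. Where you genuinely diverge is the $C^{\alpha}$ dependence on $\overline{x}$: the paper never writes the composed coefficient formula, but instead decomposes $X^{u}_{\left[I\right]}=S^{u}_{\left[I\right],\overline{x}}+O_{\left[I\right],\overline{x}}$ and $S^{u}_{\left[I\right],\overline{x}}=Y_{\left[I\right]}+\widehat{R}^{\overline{x}}_{\overline{x},\left[I\right]}$ (smooth Rothschild--Stein applied to the $S^{\overline{x}}$'s at $y=\overline{x}$) and transfers the $C^{\alpha}$ dependence from the Taylor coefficients of the $S$'s and from the Jacobian of $\Theta_{\overline{x}}$ (Propositions \ref{NonSmoothExpMap} and \ref{Prop Theta C alfa}), whereas you verify it factor by factor in the explicit expression $\sum_{k}a^{k}_{\left[I\right]}\bigl(E\left(u,\overline{x}\right)\bigr)\bigl[\bigl(\partial E/\partial u\left(u,\overline{x}\right)\bigr)^{-1}\bigr]_{Jk}$. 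For $\left\vert I\right\vert<r$ your computation is correct (Lipschitz coefficients composed with a map that is $C^{\alpha}$ in $\overline{x}$, inverse Jacobian uniformly nondegenerate).

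The gap is in your top-weight case $\left\vert I\right\vert=r$. The splitting $X^{u}_{\left[I\right]}=\bigl(S^{\overline{x}}_{\left[I\right]}\bigr)^{u}+\bigl(X_{\left[I\right]}-S^{\overline{x}}_{\left[I\right]}\bigr)^{u}$ does \emph{not} make ``the only factors composed with $E$ polynomial in $x$'': the second summand must itself be expressed in the coordinates $u$, so its coefficient of $\partial_{u_{J}}$ is $\sum_{k}c^{k}_{I}\bigl(E\left(u,\overline{x}\right)\bigr)\bigl[\bigl(\partial E/\partial u\left(u,\overline{x}\right)\bigr)^{-1}\bigr]_{Jk}$, where $c^{k}_{I}$ (the difference between the coefficients of $X_{\left[I\right]}$ and those of $S^{\overline{x}}_{\left[I\right]}$) is merely $C^{\alpha}$ in $x$ when $\left\vert I\right\vert=r$. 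Composing a $C^{\alpha}$ function with $\overline{x}\mapsto E\left(u,\overline{x}\right)$, which by Proposition \ref{NonSmoothExpMap} is itself only $C^{\alpha}$ in $\overline{x}$, yields a priori only $C^{\alpha^{2}}$ dependence, and the smallness $c^{k}_{I}\left(x\right)=O\bigl(\left\vert x-\overline{x}\right\vert^{\alpha}\bigr)$ from Proposition \ref{Proposition X_I-S_I} improves the constant, not the H\"older exponent; so the very difficulty you set out to bypass reappears in the piece your patch discards. To close the argument along your lines you would need a separate treatment of $\bigl(X_{\left[I\right]}-S^{\overline{x}}_{\left[I\right]}\bigr)^{u}$ at top weight (this is the step the paper handles by its transfer argument for $O_{\left[I\right],\overline{x}}$ rather than by your ``only polynomial factors'' claim, which as stated is false).
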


\begin{proof}
Fix a point $\overline{x}\in\mathbb{R}^{p},$ and define the smooth
approximating vector fields $S_{i,\overline{x}}$. Here it will be more
convenient to denote by the subscript $\overline{x}$ the dependence on the
center $\overline{x}$ of the approximation$.$ We know that, expressing the
vector fields with respect to the canonical coordinates $u$ of $S_{\left[
I\right]  ,\overline{x}}$ at $\overline{x}$ (regularized canonical coordinates
of $X_{\left[  I\right]  }$ at $\overline{x}$),%
\[
X_{\left[  I\right]  }^{u}-S_{\left[  I\right]  ,\overline{x}}^{u}\text{ has
weight }\geq\alpha-\left\vert I\right\vert \text{ near }u=0,\text{ for any
}\left\vert I\right\vert \leq r\text{.}%
\]
More explicitly, this means that we can write%
\begin{equation}
X_{\left[  I\right]  }^{u}=S_{\left[  I\right]  ,\overline{x}}^{u}+O_{\left[
I\right]  ,\overline{x}}\ , \label{X=S+O}%
\end{equation}
where $O_{\left[  I\right]  ,\overline{x}}$ are $C^{r-\left\vert I\right\vert
,\alpha}$ vector fields (in the variables $u$) of weight $\geq\alpha
-\left\vert I\right\vert $ near $u=0,$ and their coefficients are $C^{\alpha}$
functions of $\overline{x}$, because the same is true for $S_{\left[
I\right]  ,\overline{x}}^{u}.$ This last assertion follows directly by the
definition of $S_{i,\overline{x}}$ and our assumptions, in view of the
following remark:
\[
\text{If }X_{i}=\sum_{j=1}^{p}b_{ij}\left(  x\right)  \partial_{x_{j}}\text{
then }S_{i,\overline{x}}=\sum_{j=1}^{p}\left(  \sum_{\left\vert \beta
\right\vert \leq r-p_{i}}\frac{\partial_{x}^{\beta}b_{ij}\left(  \overline
{x}\right)  }{\beta!}\left(  x-\overline{x}\right)  ^{\beta}\right)
\partial_{x_{j}}.
\]
Since we are assuming $b_{ij}\in C^{r-p_{i},\alpha}$, from the above formula
one reads that:

\begin{enumerate}
\item[(i)] the coefficients of $S_{i,\overline{x}}$ are $C^{\alpha}$ functions
of $\overline{x}$;

\item[(ii)] the same is true for commutators $S_{\left[  I\right]
,\overline{x}}$ for $\left\vert I\right\vert \leq r$;

\item[(iii)] the same is true if we express $S_{\left[  I\right]
,\overline{x}}$ with respect to new variables $u$ which are smooth functions
of $x$, since the Jacobian matrix of the map $\Theta_{\overline{x}}$ depends
on $\overline{x}$ in a $C^{\alpha}$ way.
\end{enumerate}

This completes the proof of $C^{\alpha}$ dependence of $O_{\left[  I\right]
,\overline{x}}$ on $\overline{x}$.

We now consider the $S_{i,\overline{x}}$'s as smooth vector fields defined in
the whole space $\mathbb{R}^{p}$ and, for any fixed $y\in\mathbb{R}^{p}$, we
apply Rothschild-Stein's local approximation theorem to the smooth
$S_{i,\overline{x}}$'s, writing%
\begin{equation}
S_{\left[  I\right]  ,\overline{x}}^{v}=Y_{\left[  I\right]  }+\widehat
{R}_{y,\left[  I\right]  }^{\overline{x}}, \label{S=Y+R}%
\end{equation}
where $Y_{\left[  I\right]  }$ are left invariant vector fields on the group,
$\widehat{R}_{y,\left[  I\right]  }^{\overline{x}}$ are smooth vector fields
of weight $\geq1-\left\vert I\right\vert ,$ smoothly depending on the point
$y,$ and the superscript $v$ in $S_{\left[  I\right]  ,\overline{x}}^{v}$
means that these vector fields are expressed with respect to the canonical
coordinates $v$ of $S_{\left[  I\right]  ,\overline{x}}$ centered at $y$. The
vector fields $\widehat{R}_{y,\left[  I\right]  }^{\overline{x}}$ also depend
on $\overline{x},$ because a different $\overline{x}$ means a different set of
vector fields $S_{\left[  I\right]  ,\overline{x}}^{v}.$ Since, by point (iii)
here above, $S_{\left[  I\right]  ,\overline{x}}^{v}$ depend on $\overline{x}$
in a $C^{\alpha}$-continuous way, the same is true for $\widehat{R}_{y,\left[
I\right]  }^{\overline{x}}.$

Next, we set $y=\overline{x}$ in (\ref{S=Y+R}); then $v=u$ (canonical
coordinates of $S_{\left[  I\right]  ,\overline{x}}$ centered at $\overline
{x}$), so we can write%
\[
S_{\left[  I\right]  ,\overline{x}}^{u}=Y_{\left[  I\right]  }+\widehat
{R}_{\overline{x},\left[  I\right]  }^{\overline{x}}%
\]
where $\widehat{R}_{\overline{x},\left[  I\right]  }^{\overline{x}}$ is a
smooth vector field of weight $\geq1-\left\vert I\right\vert $ near
$\overline{x},$ depending on $\overline{x}$ in a $C^{\alpha}$ continuous way.
This fact, together with (\ref{X=S+O}), allows us to write:%
\[
X_{\left[  I\right]  }^{u}=Y_{\left[  I\right]  }+R_{\overline{x},\left[
I\right]  }%
\]
where $R_{\overline{x},\left[  I\right]  }$ is a $C^{r-\left\vert I\right\vert
,\alpha}$ vector field of weight $\geq\alpha-\left\vert I\right\vert $ near
$\overline{x},$ depending on $\overline{x}$ in a $C^{\alpha}$ continuous way.
\end{proof}

\hfill

Finally, by Theorem \ref{Thm X=Y+R} and Proposition \ref{Prop Theta C alfa} we
immediately get:

\begin{theorem}
\label{Thm nonsmooth approx}Let $Y_{\left[  I\right]  }$ be the left invariant
homogeneous vector field on the group $G$. Then
\begin{equation}
X_{\left[  I\right]  }^{x}\left(  f\left(  \Theta_{y}\left(  x\right)
\right)  \right)  =\left(  Y_{\left[  I\right]  }f+R_{y,\left[  I\right]
}f\right)  \left(  \Theta_{y}\left(  x\right)  \right)  , \label{approx}%
\end{equation}
where $\Theta_{y}\left(  \cdot\right)  $ is a smooth diffeomorphism, depending
on $y$ in a $C^{\alpha}$ continuous way, and $R_{y,\left[  I\right]  }$ are
$C^{r-\left\vert I\right\vert ,\alpha}$ vector fields of weight $\geq
\alpha-\left\vert I\right\vert ,$ depending on $y$ in a $C^{\alpha}$
continuous way.
\end{theorem}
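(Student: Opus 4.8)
The plan is to obtain (\ref{approx}) as a one-line consequence of Theorem \ref{Thm X=Y+R} together with the standard rule for re-expressing a vector field in new coordinates, exactly as one passes from Theorem \ref{Thm Approximation} to Theorem \ref{Thm Approximation local} in the smooth setting. First I would record that, for each fixed $y$, the map $u=\Theta_{y}(x)$ is by construction the inverse of the smooth map $u\longmapsto E(u,y)=\exp\left(\sum_{I\in B}u_{I}S_{\left[I\right]}^{y}\right)(y)$, hence is itself a smooth diffeomorphism of $x$ on a neighborhood of $y$; and Proposition \ref{Prop Theta C alfa} already gives that $\Theta_{\overline{x}}(x)$ and the entries of its Jacobian $\partial_{x}\Theta_{\overline{x}}(x)$ depend on $\overline{x}$ in a $C^{\alpha}$ way. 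This establishes the two structural claims about $\Theta_{y}$ in the statement with no extra work.

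Next I would use the transfer identity for a vector field $Z$ written in the two coordinate systems, namely
\[
Z^{x}\left[f\left(\Theta_{y}(x)\right)\right]=\left(Z^{u}f\right)\left(\Theta_{y}(x)\right),
\]
valid for every smooth $f$, where $Z^{u}$ denotes $Z$ in the coordinates $u=\Theta_{y}(x)$; this is the analogue of (\ref{Zeta csi - u}) and needs only the chain rule and the smoothness of $u\longmapsto E(u,y)$. Applying it with $Z=X_{\left[I\right]}$ and then substituting $X_{\left[I\right]}^{u}=Y_{\left[I\right]}+R_{y,\left[I\right]}$ from Theorem \ref{Thm X=Y+R} --- applied with $\overline{x}=y$, so that the regularized canonical coordinates in play are precisely $u=\Theta_{y}(x)$ --- gives
\[
X_{\left[I\right]}^{x}\left[f\left(\Theta_{y}(x)\right)\right]=\left(\left(Y_{\left[I\right]}+R_{y,\left[I\right]}\right)f\right)\left(\Theta_{y}(x)\right),
\]
which is (\ref{approx}). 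The fact that $R_{y,\left[I\right]}$ is a $C^{r-\left\vert I\right\vert,\alpha}$ vector field of weight $\geq\alpha-\left\vert I\right\vert$ near $y$, depending $C^{\alpha}$-continuously on $y$, is asserted verbatim by Theorem \ref{Thm X=Y+R}, so no further estimation is needed.

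Since all the analytic substance --- the weight bound on the remainder and the two $C^{\alpha}$-in-parameter statements --- has already been proved in Theorem \ref{Thm X=Y+R} and Proposition \ref{Prop Theta C alfa}, there is no real obstacle here; the proof is pure assembly. The only thing worth checking carefully is bookkeeping: that the coordinates called $u$ in Theorem \ref{Thm X=Y+R} coincide with the chart $\Theta_{y}$ used in this statement, i.e.\ that ``regularized canonical coordinates centered at $y$'' is literally the inverse of $E(\cdot,y)$. This is immediate from the definition of $\Theta_{\overline{x}}$, so the argument closes at once.
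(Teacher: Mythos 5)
Your proposal is correct and follows exactly the route the paper takes: the paper states Theorem \ref{Thm nonsmooth approx} as an immediate consequence of Theorem \ref{Thm X=Y+R} and Proposition \ref{Prop Theta C alfa}, which is precisely your assembly via the change-of-coordinates identity analogous to (\ref{Zeta csi - u}). Your explicit bookkeeping that the coordinates $u$ of Theorem \ref{Thm X=Y+R} are the inverse of $E\left(\cdot,y\right)$, i.e.\ the chart $\Theta_{y}$, is the only step the paper leaves implicit, and you handle it correctly.
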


\subsection{Equivalent quasidistances and properties of the nonsmooth map
$\Theta$\label{section nonsmooth theta}}

Here we will prove two useful properties of the map $\Theta_{y}\left(
\cdot\right)  $.

\begin{proposition}
\label{Prop equiv dist}For every $\overline{x}\in\mathbb{R}^{p}$ there exist a
neighborhood $W$ of $\overline{x}$ and constants $C_{1},C_{2}>0$ such that for
any $x,y\in W,$ the following local equivalence holds:%
\[
C_{1}d\left(  x,y\right)  \leq\rho\left(  x,y\right)  \leq C_{2}d\left(
x,y\right)  .
\]

\end{proposition}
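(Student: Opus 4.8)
The plan is to deduce the equivalence from the corresponding facts for the \emph{frozen} smooth vector fields $S^{y}_{i}$, relating both $d$ and $\rho$ to the distance $d_{S^{y}}$ induced by $S^{y}$. Recall that, by construction, $\rho(x,y)=\|\Theta_{y}(x)\|$, and that $\Theta_{y}(\cdot)$ is exactly the canonical‑coordinate map of the smooth system $S^{y}_{i}$ centred at $y$. Fix $\overline{x}$, choose $\Omega'$ with $\overline{x}\in\Omega'\Subset\Omega$, and let $W$ be a small neighbourhood of $\overline{x}$ with $\overline{W}\subset\Omega'$; $W$ will be shrunk finitely many times so that all the local statements below hold for all $x,y\in W$.

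First I would check that the smooth theory of \S\ref{section approximation} is available for each $S^{y}_{i}$, $y\in\overline{W}$, with constants uniform in $y$. By Proposition \ref{Proposition S_i} one has $(S^{y}_{[I]})_{y}=(X_{[I]})_{y}$ for $|I|\le r$, so the $S^{y}_{i}$ are free up to weight $r$ and span $\mathbb{R}^{p}$ at the point $y$; since both properties are open conditions on $(y,z)$ (the relevant ranks are lower semicontinuous and already maximal at $(\overline{x},\overline{x})$), they persist on a neighbourhood of $(\overline{x},\overline{x})$, hence, after shrinking $W$, for all $y\in\overline{W}$ and all $z$ in a fixed neighbourhood of $\overline{x}$. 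Thus Theorem \ref{Thm ball-box free} applies to $S^{y}$; and — as stressed in the remark following that theorem — once the basis $B$ is fixed all the constants there depend only on finitely many $C^{k}$‑norms of the coefficients of the $S^{y}_{[I]}$ on a compact set and on the lower bound $|\det((S^{y}_{[I]})_{y})_{I\in B}|=|\det((X_{[I]})_{y})_{I\in B}|\ge c_{0}$, both of which are uniform in $y\in\overline{W}$ under Assumptions (B) and because $y\mapsto S^{y}_{i}$ is continuous. Using also the elementary equivalence of $Box_{S^{y}}(y,R)$ with the set $\{\|u\|<cR\}$ in canonical coordinates, Theorem \ref{Thm ball-box free}(i) gives
\[
c_{1}\,d_{S^{y}}(x,y)\ \le\ \|\Theta_{y}(x)\|=\rho(x,y)\ \le\ c_{2}\,d_{S^{y}}(x,y)
\]
with $c_{1},c_{2}$ independent of $y\in\overline{W}$ and of $x$ in a uniform neighbourhood of $y$.

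It then remains to compare $d_{S^{y}}$ with $d=d_{X}$. This is precisely the last assertion of Proposition \ref{Proposition S_i}, applied with the freezing point $y$ in place of $\overline{x}$: there are $c_{1}',c_{2}',R_{0}'$, not depending on $y\in\overline{W}$, with $B_{S^{y}}(y,c_{1}'R)\subset B_{X}(y,R)\subset B_{S^{y}}(y,c_{2}'R)$ for $R<R_{0}'$, i.e. $d_{S^{y}}(x,y)$ and $d(x,y)$ are comparable, uniformly. Chaining this with the previous display, and shrinking $W$ so that $d(x,y)$ — hence $\rho(x,y)$ and $d_{S^{y}}(x,y)$ — stays below $\min(R_{0},R_{0}')$ for $x,y\in W$ (possible since $d(x,y)\le c\,|x-y|^{1/r}$ on $\Omega'$ by Proposition \ref{Prop fefferman phong}), yields $C_{1}d(x,y)\le\rho(x,y)\le C_{2}d(x,y)$, as desired.

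The routine parts are the box/norm comparison and the bookkeeping of neighbourhoods; the only point requiring genuine care is the \textbf{uniformity in the freezing point $y$} of the constants inherited from the smooth theory — that Theorem \ref{Thm ball-box free} and the last part of Proposition \ref{Proposition S_i} do not degenerate as $y$ ranges over $\overline{W}$. It is exactly this uniformity that makes it legitimate to first freeze at $y$, run the smooth machinery, and then let $y$ vary together with $x$.
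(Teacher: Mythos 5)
Your proof follows essentially the same route as the paper's: the chain $\rho(x,y)=\|\Theta_{y}(x)\|\simeq d_{S^{y}}(x,y)$ via the ball--box theorem for the frozen smooth fields, followed by $d_{S^{y}}\simeq d_{X}$ via the last assertion of Proposition \ref{Proposition S_i}. The only difference is that you make explicit the uniformity in the freezing point $y$ and the applicability of the smooth theory to $S^{y}$, details the paper leaves implicit; this is a correct and welcome elaboration rather than a different argument.
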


\begin{proof}
By the boll-box theorem for free smooth vector fields, we have%
\[
\rho\left(  x,y\right)  =\left\Vert \Theta_{y}\left(  x\right)  \right\Vert
\simeq d_{\widetilde{S}^{y}}\left(  y,x\right)  .
\]
In turn, by Proposition \ref{Proposition S_i},
\[
d_{\widetilde{S}^{y}}\left(  y,x\right)  \simeq d_{\widetilde{X}}\left(
y,x\right)
\]
and we are done.
\end{proof}

\begin{proposition}
\label{Prop nonsmooth change}The change of coordinate in $\mathbb{R}^{p}$
given by%
\[
u=\Theta_{y}\left(  x\right)
\]
has a Jacobian determinant given by%
\[
dx=c\left(  y\right)  \left(  1+O\left(  \left\Vert u\right\Vert \right)
\right)  du
\]
where $c\left(  y\right)  $ is a $C^{\alpha}$ function, bounded and bounded
away from zero. More explicitly, this means that $dx=\left[  c\left(
y\right)  +\omega\left(  y,u\right)  \right]  du$ with $\left\vert
\omega\left(  y,u\right)  \right\vert \leq c\left\Vert u\right\Vert $ and
$c\left(  y\right)  $ as above.
\end{proposition}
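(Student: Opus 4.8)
The plan is to argue directly with the map $E(u,y)=\exp\!\big(\sum_{I\in B}u_I S_{[I]}^{y}\big)(y)$ of (\ref{diffeomorphism}), whose inverse at fixed $y$ is $x\mapsto\Theta_y(x)$. Since $x=E(u,y)$, the change of variables reads $dx=J_E(u,y)\,du$ with $J_E(u,y)=\det\frac{\partial E}{\partial u}(u,y)$, so the statement is an estimate for $J_E$. First I would set $c(y):=J_E(0,y)$ and identify it: by Proposition \ref{Proposition S_i} the columns of $\frac{\partial E}{\partial u}(0,y)$ are the vectors $(S_{[I]}^{y})_{y}=(X_{[I]})_{y}$, $I\in B$, so $c(y)=\det\big((X_{[I]})_{y}\big)_{I\in B}$. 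Since $|I|\le r$ for $I\in B$, the coefficients of $X_{[I]}$ lie in $C^{r-|I|,\alpha}\subseteq C^{\alpha}$, and a determinant is a polynomial in its entries, so $c\in C^{\alpha}$; moreover $\{(X_{[I]})_{y}\}_{I\in B}$ is a basis of $\mathbb{R}^{p}$ at every point of $\Omega$ (the $X_i$ being free and satisfying (B2)), hence $c(y)\neq0$, so on a fixed $\overline{\Omega'}$ with $\Omega'\Subset\Omega$ the function $c$ is bounded and $|c|$ is bounded below by a positive constant $c_0$.

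Next I would bound $J_E(u,y)-c(y)$ uniformly in $y$. For fixed $y$ the map $u\mapsto E(u,y)$ is $C^{\infty}$, being the time-one flow of the polynomial vector field $\sum_{I\in B}u_I S_{[I]}^{y}$; and by Proposition \ref{NonSmoothExpMap} each $u$-derivative $\frac{\partial^{|\beta|}E}{\partial u^{\beta}}(u,y)$ is continuous in $u$ and depends on $y$ in a $C^{\alpha}$ way, hence is bounded for $(u,y)$ in a compact neighbourhood of $\{0\}\times\overline{\Omega'}$. Consequently, on a set $\{|u|<\delta\}\times\overline{\Omega'}$ the first and second $u$-derivatives of $E$ are bounded uniformly in $y$; expanding $\det$ by the chain rule gives $\big|\frac{\partial}{\partial u}J_E(u,y)\big|\le C$ there, and the mean value theorem then yields $|J_E(u,y)-c(y)|\le C|u|$ for all $|u|<\delta$ and $y\in\Omega'$.

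Finally, it remains to replace the Euclidean $|u|$ by the homogeneous norm $\|u\|=\sum_{J\in B}|u_J|^{1/|J|}$. If $\|u\|\le1$ then $|u_J|\le\|u\|^{|J|}\le\|u\|$ for every $J\in B$, so $|u|\le\sqrt{p}\,\|u\|$; after shrinking $\delta$ so that $\|u\|\le1$ on the set in question, we get $|J_E(u,y)-c(y)|\le C'\|u\|$. Setting $\omega(y,u):=J_E(u,y)-c(y)$ we obtain $dx=[c(y)+\omega(y,u)]\,du$ with $|\omega(y,u)|\le C'\|u\|$, and since $|c(y)|\ge c_0>0$ this is precisely $dx=c(y)\big(1+O(\|u\|)\big)\,du$; a neighbourhood $W$ of $\overline{x}$ as in the statement is obtained by taking $W\subseteq\Omega'$ small enough that $\|\Theta_y(x)\|<\delta$ whenever $x,y\in W$, which is possible since $\Theta$ is continuous and $\Theta_y(y)=0$. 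The only delicate point is the uniformity in $y$ of the first-order expansion of $J_E$, which is exactly what Proposition \ref{NonSmoothExpMap} is designed to supply; everything else is elementary.
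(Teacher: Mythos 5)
Your proof is correct, and it takes a partly different route from the paper's. The paper identifies the value of the Jacobian at $u=0$ by applying the nonsmooth approximation theorem: it evaluates (\ref{approx}) at $x=y$ on the coordinate functions $f(u)=u_{J}$, the remainder $R_{y,\left[ I\right] }$ contributing nothing at $u=0$ because its weight is $\geq\alpha-\left\vert I\right\vert$, and so obtains $\det C(y)\cdot J(y)=1$, where $C(y)$ is the coefficient matrix of the $X_{\left[ I\right] }$, $I\in B$, and $J(y)$ is the Jacobian of $u=\Theta_{y}(x)$ at $x=y$; inverting gives $c(y)=\det C(y)$. You obtain the same $c(y)=\det\bigl((X_{\left[ I\right] })_{y}\bigr)_{I\in B}$ more directly, by differentiating the flow $E(u,y)$ at $u=0$ (the variational equation together with Proposition \ref{Proposition S_i}), with no appeal to Theorem \ref{Thm nonsmooth approx} at all; the $C^{\alpha}$ regularity and the nondegeneracy of $c(y)$ are then handled as in the paper. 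For the error term, the paper simply remarks that the Jacobian determinant of $x=\Theta_{y}^{-1}(u)$ is smooth in $u$ and hence of the form $c(y)+\omega(y,u)$ with $\left\vert \omega\right\vert \leq c\left\Vert u\right\Vert$; this tacitly uses bounds on the $u$-derivatives that are uniform in $y$, and your argument makes precisely this point explicit via Proposition \ref{NonSmoothExpMap} and the mean value theorem, followed by the elementary comparison $\left\vert u\right\vert \leq\sqrt{p}\left\Vert u\right\Vert$ for $\left\Vert u\right\Vert \leq1$ (for this step one should note, as the proof of Proposition \ref{NonSmoothExpMap} indeed shows, that the $C^{\alpha}$ dependence on $y$ is locally uniform in $u$, so the derivatives are bounded on compact sets). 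In short, your version is more elementary and self-contained, resting only on Propositions \ref{Proposition S_i} and \ref{NonSmoothExpMap}, whereas the paper's passage through the approximation theorem additionally exhibits the identity $\det C(y)\cdot J(y)=1$, tying the Jacobian of $\Theta_{y}$ itself to the coefficient matrix of the vector fields, in the spirit of Rothschild--Stein's original computation; both yield the same constant $c(y)$ and the same estimate.
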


\begin{proof}
We will compute the Jacobian determinant of the inverse mapping $x=\Theta
_{y}^{-1}(u)$. To do this, set%
\[
X_{\left[  I\right]  }=\sum_{k=1}^{N}c_{Ik}(x)\,\frac{\partial}{\partial
x_{k}}\text{ \ \ for every }I\in B
\]
and rewrite the left hand side of (\ref{approx}) as%
\[
\sum_{k}c_{Ik}(x)\,\sum_{j}\frac{\partial f}{\partial u_{j}}\,\left(
\Theta_{y}\left(  x\right)  \right)  \text{ }\frac{\partial}{\partial y_{k}%
}\,\left[  \left(  \Theta_{y}\left(  x\right)  \right)  _{j}\right]  .
\]
Then (\ref{approx}), evaluated at $x=y$, becomes:
\[
\sum_{k}c_{Ik}(y)\,\sum_{j}\frac{\partial f}{\partial u_{j}}\left(  0\right)
\frac{\partial}{\partial y_{k}}\,\left[  \left(  \Theta_{y}\left(  x\right)
\right)  _{j}\right]  _{x=y}=\left(  Y_{\left[  I\right]  }f+R_{\left[
I\right]  }^{y}f\right)  (0).
\]
Choosing $f(u)=u_{J}$ $\left(  J\in B\right)  $,
\[
\sum_{k}c_{Ik}(y)\,\frac{\partial}{\partial y_{k}}\,\left[  \left(  \Theta
_{y}\left(  x\right)  \right)  _{J}\right]  _{y=x}=\left(  Y_{\left[
I\right]  }u_{J}+R_{\left[  I\right]  }^{y}u_{J}\right)  (0)=\delta_{IJ}%
\]
where the last equality follows recalling that
\[
Y_{\left[  I\right]  }\left[  f\right]  \left(  0\right)  =\frac{d}%
{dt}\,f\left(  \exp\,tY_{\left[  I\right]  }\right)  _{/t=0}%
\]
and that $\exp\,tY_{\left[  I\right]  }$ equals, in local coordinates,
$(0,\ldots,t,\ldots,0)$ with $t$ in the $\left[  I\right]  $-th position. As
to $R_{\left[  I\right]  }^{y}u_{J},$ by Theorem \ref{Thm nonsmooth approx} it
has weight $\geq\alpha-\left\vert I\right\vert ,$ which by definition means
that%
\[
R_{\left[  I\right]  }^{y}=\sum_{J\in B}a_{IJ}\left(  u\right)  \frac
{\partial}{\partial u_{J}}\text{ \ with }\left\vert a_{IJ}\left(  u\right)
\right\vert \leq C\left\Vert u\right\Vert ^{\alpha-\left\vert I\right\vert
+\left\vert J\right\vert }.
\]
Then%
\begin{align*}
R_{\left[  I\right]  }^{y}u_{J}  &  =\delta_{IJ}a_{IJ}\left(  u\right)  ;\\
\left\vert R_{\left[  I\right]  }^{y}u_{J}\right\vert  &  \leq C\left\Vert
u\right\Vert ^{\alpha};\\
\left(  R_{\left[  I\right]  }^{y}u_{J}\right)  \left(  0\right)   &  =0.
\end{align*}
Defining the square matrix
\[
C(y)=\left\{  c_{hk}(y)\right\}  _{hk}%
\]
and letting $J(y)$ be the Jacobian determinant of the mapping $u=(\Theta
_{y}(x))$ at $x=y$, we get
\[
\text{Det}\left[  C(y)\right]  \cdot J(y)=1.
\]
Hence the Jacobian determinant of the mapping $x=\Theta_{y}^{-1}(u)$ at $u=0$
equals Det$\left[  C(y)\right]  \equiv c(y),$ which is a $C^{\alpha}$
function, as the coefficients of the vector fields $X_{\left[  I\right]  }$
are. Moreover $c\left(  y\right)  $ is bounded away from zero since the
$X_{\left[  I\right]  }$'s are independent.

Since the determinant of $x=\Theta_{y}^{-1}(u)$ is a smooth function in $u$,
it equals%
\[
c\left(  y\right)  +\omega\left(  y,u\right)
\]
with $\left\vert \omega\left(  y,u\right)  \right\vert \leq c\left\Vert
u\right\Vert $ and we conclude%
\[
dx=c(y)\cdot\left(  1+O\left(  \left\Vert u\right\Vert \right)  \right)  du.
\]

\end{proof}

\bigskip

\bigskip

\noindent\textsc{Dipartimento di Matematica}

\noindent\textsc{Politecnico di Milano}

\noindent\textsc{Via Bonardi 9, 20133 Milano, ITALY}

\noindent\texttt{marco.bramanti@polimi.it}

\bigskip

\noindent\textsc{Dipartimento di Ingegneria dell'Informazione e Metodi
Matematici}

\noindent\textsc{Universit\`{a} di Bergamo}

\noindent\textsc{Viale Marconi 5, 24044 Dalmine BG, ITALY}

\noindent\texttt{luca.brandolini@unibg.it}

\bigskip

\noindent\textsc{Dipartimento di Ingegneria dell'Informazione e Metodi
Matematici}

\noindent\textsc{Universit\`{a} di Bergamo}

\noindent\textsc{Viale Marconi 5, 24044 Dalmine BG, ITALY}

\noindent\texttt{marco.pedroni@unibg.it}

\end{document}